\numberwithin{equation}{section}
\theoremstyle{plain}
\newtheorem{theorem}{Theorem}[section]
\newtheorem{proposition}[theorem]{Proposition}
\newtheorem{lemma}[theorem]{Lemma}
\newtheorem{conjecture}[theorem]{Conjecture}
\theoremstyle{definition}
\newtheorem{question}[theorem]{Question}
\newtheorem{definition}[theorem]{Definition}
\newtheorem{example}[theorem]{Example}
\newtheorem{conjexample}[theorem]{Conjectural Example}
\newtheorem{remark}[theorem]{Remark}
\renewcommand{\cite}{\citet}
\def\^#1{\ifmmode {\mathaccent"705E #1} \else {\accent94 #1} \fi}
\def\~#1{\ifmmode {\mathaccent"707E #1} \else {\accent"7E #1} \fi}
\def\*#1{#1^\ast}
\edef\-#1{\noexpand\ifmmode {\noexpand\bar{#1}} \noexpand\else \-#1\noexpand\fi}
\def\>#1{\vec{#1}}
\def\.#1{\dot{#1}}
\def\atop{\@@atop}
\def\%#1{\mathcal{#1}}
\renewcommand{\leq}{\leqslant}
\renewcommand{\geq}{\geqslant}
\renewcommand{\phi}{\varphi}
\newcommand{\eps}{\varepsilon}
\newcommand{\eq}{\eqref}
\newcommand{\dtv}{\mathop{d_{\mathrm{TV}}}}
\newcommand{\bigo}{\mathrm{O}}
\def\tsfrac#1#2{{\textstyle\frac{#1}{#2}}}
\newcommand{\toinf}{\to\infty}
\newcommand{\I}{\mathrm{I}}
\newcommand{\Exp}{\mathop{\mathrm{Exp}}}
\newcommand{\Beta}{{\mathrm{Beta}}}
\newcommand{\IE}{\mathbbm{E}}
\newcommand{\IP}{\mathbbm{P}}
\newcommand{\law}{\mathscr{L}}
\newcommand{\eqlaw}{\stackrel{\mathscr{D}}{=}}
\newcommand{\IR}{\mathbbm{R}}
\def\be#1{\begin{equation*}#1\end{equation*}}
\def\ben#1{\begin{equation}#1\end{equation}}
\def\bes#1{\begin{equation*}\begin{split}#1\end{split}\end{equation*}}
\def\besn#1{\begin{equation}\begin{split}#1\end{split}\end{equation}}
\def\ba#1{\begin{align*}#1\end{align*}}
\def\ban#1{\begin{align}#1\end{align}}
\def\klr#1{(#1)}
\def\bklr#1{\bigl(#1\bigr)}
\def\bbklr#1{\Bigl(#1\Bigr)}
\def\bbbklr#1{\biggl(#1\biggr)}
\def\bbbbklr#1{\Biggl(#1\Biggr)}
\def\kle#1{[#1]}
\def\bkle#1{\bigl[#1\bigr]}
\def\bbbkle#1{\biggl[#1\biggr]}
\def\bklg#1{\bigl\{#1\bigr\}}
\def\bbklg#1{\Bigl\{#1\Bigr\}}
\def\bbbklg#1{\biggl\{#1\biggr\}}
\def\bbbbklg#1{\Biggl\{#1\Biggr\}}
\def\norm#1{\Vert#1\Vert}
\def\abs#1{\vert#1\vert}
\def\babs#1{\bigl\vert#1\bigr\vert}
\def\bbabs#1{\Bigl\vert#1\Bigr\vert}
\def\bbbabs#1{\biggl\vert#1\biggr\vert}
\def\bbbbabs#1{\Biggl\vert#1\Biggr\vert}
\def\bmid{\big\vert}
\def\bbbmid{\bigg\vert}
\def\floor#1{{\lfloor#1\rfloor}}
\def\ceil#1{{\lceil#1\rceil}}
\def\convprob{\stackrel{P}{\longrightarrow}}
\def\now{%
\minute=\time%
\hour=\time \divide \hour by 60%
\hourMins=\hour \multiply\hourMins by 60%
\advance\minute by -\hourMins%
\zeroPadTwo{\the\hour}:\zeroPadTwo{\the\minute}%
}
\def\zeroPadTwo#1{\ifnum #1<10 0\fi#1}
\renewcommand\section{\@startsection {section}{1}{\z@}%
{-3.5ex \@plus -1ex \@minus -.2ex}%
{1.3ex \@plus.2ex}%
{\center\small\sc\mathversion{bold}\MakeUppercase}}
\def\subsection#1{\@startsection {subsection}{2}{0pt}%
{-3.5ex \@plus -1ex \@minus -.2ex}%
{1ex \@plus.2ex}%
{\bf\mathversion{bold}}{#1}}
\def\subsubsection#1{\@startsection{subsubsection}{3}{0pt}%
{\medskipamount}%
{-10pt}%
{\normalsize\itshape}{\kern-2.2ex. #1.}}
\def\blfootnote{\xdef\@thefnmark{}\@footnotetext}
\def\s#1{^{(#1)}}
\def\rfb#1{^{[#1]}}
\def\rf#1{^{\overline{#1}}}
\def\urnvar(#1,#2,#3,#4){X_{ #1,#2}(#3, #4)}
\def\urnvarshort(#1,#2,#3,#4){X_{ #1}}
\def\urnlaw(#1,#2,#3,#4){\mathcal{P}
^#2({\textstyle{#3\atop #4}};#1)}
\def\burnlaw(#1,#2,#3,#4){\mathcal{P}
^#2\bklr{{\textstyle{#3\atop #4}};#1}}
\def\bburnlaw(#1,#2,#3,#4){{\mathcal{P}
^#2\bbklr{{\textstyle{#3\atop #4\strut}};#1}}}
\def\limitlaw(#1,#2,#3){\mathrm{UL}(#1,#2,#3)}
\def\P(#1,#2,#3){\mathcal{P}({\textstyle{#2\atop #3}};#1)}
\newcommand\supp{\mathcal{S}}
\def\ul{\mathrm{UL}}
\begin{document}

\title{\sc\bf\large\MakeUppercase{P\'olya urns with immigration at random times}}
\author{\sc Erol Pek\"oz, Adrian R\"ollin and Nathan Ross}
\date{\it Boston University, National University of Singapore, University of Melbourne }
\maketitle

\begin{abstract}  
We study the number of white balls in a classical P\'olya urn model with the additional feature that, at random times, a black ball is added to the urn. The number of draws between these random times are i.i.d.\  and, under certain moment conditions on the inter-arrival distribution, we characterize the limiting distribution of the (properly scaled) number of white balls as the number of draws goes to infinity. The possible limiting distributions obtained in this way vary considerably depending on the inter-arrival distribution and are difficult to describe explicitly. However, we show that the limits are fixed points of certain probabilistic distributional transformations, and this fact provides a proof of convergence and leads to properties of the limits. The model can alternatively be viewed as a preferential attachment random graph model where added vertices initially have a random number of edges, and from this perspective, our results describe the limit of the degree of a fixed vertex.
\end{abstract}

\noindent\textbf{Keywords: } 
P\'olya urns; distributional convergence; distributional
fixed point equation; preferential attachment random graph.

\section{Introduction and main results}

P\'olya urn schemes form a rich class of fundamental probability models with a long history going back to \cite{Eggenberger1923} and extending to present day research. 
The standard general model is a recursive Markov process which begins with ``balls" of different colors in an urn, and at each step a ball is drawn randomly from the urn and returned along with the addition or removal of some prescribed number of balls of each color. The popularity of these models is due to the fact that variations of this basic P\'olya urn reinforcement mechanism appear in applications in biology, computer science, statistics, and elsewhere; see \cite{Pemantle2007} and
\cite{Mahmoud2009}.
Here we study the limiting behavior of a new urn model that is a simple variation of the classical P\'olya urn and which  arises naturally from a certain random graph model. Outside of application, the model is intrinsically interesting since the limiting behavior is subtle and intricately related to our method of proof, and other more standard techniques for analyzing urn models do not naturally apply (a more thorough discussion of existing literature and these other methods of proof can be found in Section~\ref{sec:lit}).
We now define our model and then state our main results.

Let~$\tau_1,\tau_2,\ldots$ be i.i.d.\ non-negative integer valued  random variables 
having  
distribution~$\pi=(\pi_k)_{k\geq0}$,
where we assume throughout that~$\pi_0<1$, and let~$T_j=\sum_{i=1}^j \tau_i$. 
It is helpful to think of the~$\tau_i$ as \emph{inter-arrival} interval lengths in a renewal process,
so that the~$T_j$ are the arrival times.
 Consider the following P\'olya urn model.
Initially, there are~$b$ black balls and~$w$ white balls. At each
step, a ball is drawn and replaced along with another of the same color.
Additionally, after draws
$T_1,T_2, \ldots$, regardless of the outcome of the draw,
a single extra black ball is added to the urn. Note that if~$\tau_i=0$, so~$T_i=T_{i+1}$, then more than one
black ball can be added to the urn between draws.

For example,
if
$(\tau_1,\ldots, \tau_5) = (1,3,0,0,4)$, then~$(T_1,\dots,T_5) = (1,4,4,4,8)$.
At Step~$1$, a regular P\'olya urn step is performed (that is, a ball is drawn and replaced along with a ball of the same color), 
and then one additional black ball is added since~$T_1 = 1$. At Steps~$2$ and~$3$, regular P\'olya urn steps are performed with no added black ball. Then, 
at Step~$4$, a regular P\'olya urn step is performed and then three additional black balls are added, since~$T_2=T_3=T_4=4$. Then, four regular P\'olya urn steps are performed after which another black ball is added ($T_5=8$). 
Note that given a black ball is added at a particular time step, 
the total number of balls added at that step has a geometric distribution 
(support starting at~$1$) with success probability~$1-\pi_0$. Note also that the number of black balls after Step 0 is not necessarily~$b$; this happens if~$\tau_1=0$, in which case black balls are added already before the first draw and replacement step is performed.

We study the  distribution of the number of white balls in the urn after~$n$ steps in this model, denoted  by~$\urnlaw(n,\pi,b,w)$. 
In particular we show that~$\urnlaw(n,\pi,b,w)$ properly scaled
converges in distribution to a non-standard limit law. 
The limits for deterministic~$\pi$ are studied in \cite{Janson2006} for~$\pi_1=1$ and \cite{Pekoz2014} for~$\pi_k=1$ with~$k>1$.
Before stating the result, we need to describe the limit.

\subsection{Urn limit laws}

Let~$v>0$, and let~$a_1,a_2,\dots$ be a sequence of non-negative numbers so that~$a_k>0$ for at least one~$k\geq 1$. Let 
\ben{\label{1}
	A(x) = \sum_{k\geq 1}a_k x^k,
}
and assume the radius of convergence~$\rho=\sup\{x\geq 0:A(x)<\infty\}$ is either positive or infinite. For such~$(a_k)_{k\geq1}$, we define the probability density 
\ben{\label{2}
	u(x) = c\,x^{v-1}\exp\bbklg{-v\int_0^x \frac{A(t)}{t}dt}, \qquad 0<x<\rho,
}
where~$c$ is an appropriate normalising constant depending on~$v$ and~$(a_k)_{k\geq 1}$, and we denote the corresponding probability distribution by~$\ul\bklr{v;(a_k)_{k\geq 1}}$. 
Specific instances of the~$\ul$ family include many standard non-negative continuous distributions
such as the exponential, Rayleigh, absolute normal, gamma, beta, and roots of gamma variables. 
We first establish that~\eq{2} is indeed a proper probability density and derive some basic properties of the laws~$\ul$.
\begin{lemma} Under the assumptions and notation above, the function~$u(x)$ defined by~\eq{2} is a probability density for an appropriate normalisation~$c$. Moreover, $Z\sim\ul\bklr{v;(a_k)_{k\geq 1}}$ has finite moments~$\mu_k = \IE Z^k$ of all orders, which satisfy the relation 
\ben{\label{3}
	\mu_k = \frac{v}{v+k}\sum_{l\geq 1} a_l \mu_{k+l}, \qquad \text{for all~$k\geq 0$.}
}
Furthermore, for~$\theta>0$,
\ben{
	\theta Z \sim \ul\bklr{v;(\theta^{-k} a_k)_{k\geq 1}}. \label{3a}
}
\end{lemma}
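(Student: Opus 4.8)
The plan is to verify each of the three claims in turn, treating them as essentially separate computations that all hinge on recognizing the right logarithmic-derivative identity for~$u$.

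First, for the normalization claim, I would show that the (unnormalized) density~$\tilde u(x) = x^{v-1}\exp\{-v\int_0^x A(t)/t\,dt\}$ is integrable on~$(0,\rho)$. Near~$0$ the exponential factor tends to~$1$ (since~$A(t)/t \to a_1$ as~$t\to0$, so the integral converges at~$0$), and~$x^{v-1}$ is integrable there because~$v>0$. Near~$\rho$: if~$\rho=\infty$, then since some~$a_k>0$ with~$k\geq1$, we have~$\int_0^x A(t)/t\,dt \geq a_k x^k/k \to \infty$, so~$\tilde u$ decays faster than any polynomial and is integrable; if~$\rho<\infty$, one must check~$\int^\rho \tilde u<\infty$, which I expect either follows because~$\int_0^x A(t)/t\,dt$ itself diverges as~$x\uparrow\rho$ (forcing exponential-type decay of~$\tilde u$), or, in the borderline case where that integral stays bounded, because~$\tilde u$ then extends continuously to~$[0,\rho]$ and is trivially integrable. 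So~$c := (\int_0^\rho \tilde u)^{-1}\in(0,\infty)$ exists.

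Second, for the moment recursion~\eq{3}, the key observation is that~$u$ satisfies the first-order ODE
\be{
	\frac{u'(x)}{u(x)} = \frac{v-1}{x} - v\,\frac{A(x)}{x},
	\qquad\text{i.e.}\qquad
	x\,u'(x) = (v-1)u(x) - v\,A(x)\,u(x),
}
valid on~$(0,\rho)$. I would fix~$k\geq0$, multiply this identity by~$x^k$, and integrate over~$(0,\rho)$. The left side gives~$\int_0^\rho x^{k+1}u'(x)\,dx = [x^{k+1}u(x)]_0^\rho - (k+1)\int_0^\rho x^k u(x)\,dx = -(k+1)\mu_k$, where the boundary terms vanish: at~$0$ because~$k+1\geq1$ and~$u$ is bounded near~$0$, and at~$\rho$ either because~$u(\rho^-)=0$ (when~$\rho<\infty$) or by the rapid decay established above (when~$\rho=\infty$). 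The right side gives~$(v-1)\mu_k - v\sum_{l\geq1}a_l\mu_{k+l}$, using~$A(x)u(x)=\sum_l a_l x^l u(x)$ and interchanging sum and integral (justified by nonnegativity, i.e.\ Tonelli, once we know all moments are finite). Equating and rearranging yields~$(v+k)\mu_k = v\sum_{l\geq1}a_l\mu_{k+l}$, which is~\eq{3}. Before this, I need finiteness of all moments~$\mu_k$: this follows from the decay of~$u$ near the upper endpoint exactly as in the integrability argument (when~$\rho<\infty$ all moments are trivially finite), so I would dispatch that first.

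Third, the scaling relation~\eq{3a} is a direct change of variables. Writing~$Y=\theta Z$ with~$Z\sim\ul(v;(a_k))$, its density is~$y\mapsto \theta^{-1}u(y/\theta)$; substituting~\eq{2} and computing~$\int_0^{y/\theta}A(t)/t\,dt = \int_0^{y}A(s/\theta)/s\,ds$ (via~$t=s/\theta$), together with~$A(s/\theta)=\sum_k a_k\theta^{-k}s^k = \tilde A(s)$ where~$\tilde A$ is the generating function of~$(\theta^{-k}a_k)$, shows that~$\theta^{-1}u(y/\theta)$ equals a constant times~$y^{v-1}\exp\{-v\int_0^y\tilde A(s)/s\,ds\}$ on~$(0,\theta\rho)$ — which is exactly the~$\ul(v;(\theta^{-k}a_k))$ density (its radius of convergence is~$\theta\rho$, consistent with~$Y$ being supported on~$(0,\theta\rho)$). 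No separate normalization check is needed since both sides are probability densities.

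The main obstacle I anticipate is the integrability and boundary-vanishing analysis at the upper endpoint~$\rho$ when~$\rho<\infty$ and the integral~$\int_0^\rho A(t)/t\,dt$ happens to converge — one must rule out or handle the case where~$u$ does not vanish at~$\rho$, which would otherwise break the boundary-term cancellation in the moment recursion. Everything else (the logarithmic derivative, integration by parts, the change of variables) is routine once that endpoint behavior is under control.
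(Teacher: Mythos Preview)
Your plan is essentially the paper's proof: the same one-term bound $\Phi(x)\geq a_{k_0}x^{k_0}/k_0$ gives integrability and finite moments uniformly on~$(0,\rho)$ (so no case split on~$\rho$ is needed), the moment recursion is the same integration by parts (the paper applies it directly to $\int_0^\rho x^{k+v-1}e^{-v\Phi(x)}\,dx$ by differentiating $e^{-v\Phi}$, rather than via the ODE for~$u$, but the computations are equivalent), and the scaling is dismissed as straightforward. The boundary obstacle you anticipate at~$x=\rho$ is exactly the point the paper singles out: it cites \cite[7.21]{Titchmarsh1958} to assert that nonnegativity of the~$a_k$ forces $\lim_{x\to\rho^-}A(x)=\infty$, hence $\Phi(x)\to\infty$, so the boundary term vanishes and your ``borderline'' scenario is ruled out rather than handled separately.
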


\begin{proof} For the first assertion, we show that the density given at~\eq{2}
has finite integral over~$(0,\rho)$. Let~$k_0\geq 1$ be such that~$a_{k_0}>0$. Observe that
\be{
	\Phi(x) := \int_{0}^x \frac{A(t)}{t}dt = \sum_{k\geq 1}\frac{a_k}{k}x^k
}	
so that
\be{
	x^{v-1}\exp\bklg{-v\Phi(x)}\leq x^{v-1}\exp\bklg{-va_{k_0}x^{k_0}/k_0},
}
which clearly has finite integral. Replacing~$x^{v-1}$ by any arbitrary power, we also conclude that all moments are finite. 
After noting that since the coefficients of~\eq{1}  are all non-negative, \cite[7.21]{Titchmarsh1958} implies that~$\lim_{x\to\rho^-}A(x)=\infty$ and hence also~$\lim_{x\to\rho^-}\Phi(x)=\infty$, the relation \eq{3} is just integration by parts; we have
\bes{
	\mu_k 
	&= c\int_0^\rho x^{k+v-1} \exp\bklg{-v\Phi(x)}dx 
	= c\int_0^\rho \frac{x^{k+v}}{k+v} \cdot v\frac{A(x)}{x}  \exp\bklg{-v\Phi(x)}dx \\
	& = \frac{v}{k+v} c\int_0^\rho x^{k+v-1} A(x) \exp\bklg{-v\Phi(x)}dx 
	 = \frac{v}{k+v} \sum_{l\geq 1}a_l \int_0^\rho x^{k+l}  u(x)dx.
}
Interchange of summation and integration is justified by the monotone convergence theorem, since all coefficients are non-negative. The final assertion~\eq{3a} is straightforward.
\end{proof}

\subsection{Limit results for urns with random immigration}

To state our first main result, let~$\Beta(\alpha, \beta)$, where~$\alpha$ and~$\beta$ are positive numbers, denote the law of 
the beta distribution supported on~$(0,1)$ with density proportional
to~$x^{\alpha-1}(1-x)^{\beta-1}$, and interpret~$\Beta(\alpha,0)$ as
the point mass at~$1$. We first consider the problem of convergence of moments of the (appropriately scaled) number of white balls in the urn. In what follows, we interpret~$\frac{\infty}{\infty+1}$ as~$1$.
Here and below, $C$ is a generic constant that may change from line to line.

\begin{theorem}\label{THM1}
Let~$b$ and~$w$ be positive integers, let~$\pi$ be a probability distribution on the non-negative integers with mean~$0<\mu\leq\infty$,
and let~$\tau\sim\pi$.
If~$k$ is an integer such that either
\begin{enumerate}
\item[$(a)$]~$\IE \tau^p <\infty$ for some~$p>1$, and~$1\leq k < (\frac{p}{2}-1)(\mu+1)-1$, or
\item[$(b)$] there is~$\eps>0$ such that~$\IP[\tau > n] \geq Cn^{-(1-\eps)}$ for~$n$ large enough, and~$k\geq 1$, 
\end{enumerate}
then there is a positive constant~$m_k(b,w,\pi)$ such that, for~$X_n\sim\urnlaw(n,\pi,b,w)$, we have
\ben{\label{7}
\IE\bbbklg{\bbklr{\frac{X_n}{n^{\mu/(\mu+1)}}}^k} \to  m_k(b,w,\pi)\qquad\text{as~$n\to\infty$}.
}
\end{theorem}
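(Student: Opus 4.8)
The plan is to establish the convergence of moments via the method of moments applied to a recursive decomposition of $X_n$, coupled with the distributional fixed-point structure hinted at in the introduction. First I would set up the basic recursion: conditioning on the first inter-arrival time $\tau_1 = \tau$, run a classical (immigration-free) P\'olya urn for $\tau$ steps starting from $b$ black and $w$ white balls, which after $\tau$ steps contains $W_\tau \sim w + \Bi(\tau, \ldots)$-type white balls with the well-known beta-binomial / Dirichlet structure — in particular $W_\tau/(b+w+\tau)$ converges to a $\Beta(w,b)$ variable. At time $T_1 = \tau$ one extra black ball is added (and possibly more if later $\tau_i$ vanish), and then the process restarts with the same structure but shifted parameters. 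This yields, for $X_n \sim \urnlaw(n,\pi,b,w)$, a distributional identity of the form $X_n \eqlaw$ (product of a beta-type factor) $\times$ ($X_{n'}$ for a smaller $n'$ with incremented black-ball count), which after the scaling by $n^{\mu/(\mu+1)}$ should become, in the limit, a fixed-point equation of the type $Z \eqlaw B^{1/(\mu+1)} Z'$ with $B$ beta-distributed and $Z'$ an independent copy — precisely the kind of equation whose solutions are the $\ul$ laws of the Lemma, via relation~\eq{3}.

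The core analytic work is to control the moments $\IE[(X_n/n^{\mu/(\mu+1)})^k]$ uniformly in $n$ and show they form a Cauchy-type sequence. I would introduce $g_k(n) = \IE[X_n^k]$ and derive from the one-step (or one-arrival) recursion an exact recursion for $g_k(n)$ in terms of $g_j(m)$ for $j \leq k$ and $m < n$, using the explicit mixed moments of the classical P\'olya urn over a random number $\tau$ of steps (these are rational expressions in rising factorials, so conditioning on $\tau$ and then averaging over $\pi$ is where the moment hypothesis $\IE\tau^p < \infty$ enters). The scaling exponent $\mu/(\mu+1)$ arises because, heuristically, after $n$ steps the number of immigration events is of order $n/\mu$ when $\mu < \infty$ and grows slower when $\mu = \infty$; tracking the black-ball count $b + (\text{number of arrivals})$ is what produces the beta parameters degenerating toward $\Beta(\alpha,0)$, i.e.\ the interpretation $\frac{\infty}{\infty+1} = 1$. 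Induction on $k$: assuming $g_j(n) \sim m_j n^{j\mu/(\mu+1)}$ for $j < k$, plug into the recursion for $g_k$, identify the leading term, and solve the resulting limiting linear relation for $m_k$ — this is exactly relation~\eq{3} with $v = \mu$ and the $a_l$ determined by $\pi$, which by the Lemma has a unique positive solution once $\mu_0 = 1$ is fixed, giving $m_k = m_k(b,w,\pi) > 0$. The condition $k < (\tfrac{p}{2}-1)(\mu+1) - 1$ in case~(a) is precisely the range in which the error terms from truncating $\tau$ at a high quantile (controlled by Markov's inequality on $\tau^p$) are of smaller order than $n^{k\mu/(\mu+1)}$; in case~(b) the fat lower tail $\IP[\tau > n] \geq C n^{-(1-\eps)}$ forces $\mu = \infty$, the scaling exponent is $1$, and the error control becomes uniform across all $k$.

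The main obstacle I anticipate is the error analysis in case~(a): showing that the contribution of atypically large inter-arrival times $\tau$ to $\IE[X_n^k]$ is negligible after scaling. A large $\tau_1$ means a long stretch of pure P\'olya reinforcement with no black immigration, which inflates the white-ball count super-linearly, and one must show that the probability of this (bounded by $\IP[\tau > t] \leq t^{-p}\IE\tau^p$) decays fast enough to beat the $k$-th moment growth — this is exactly what pins down the admissible range of $k$ and is the delicate book-keeping step. A secondary technical point is handling the $\mu = \infty$ regime and the degenerate limit $\Beta(\alpha,0) = \delta_1$ carefully, so that the recursion still closes and $m_k$ remains finite and positive; here I would treat $\mu = \infty$ either as a limiting case of the finite-$\mu$ computation or directly via the renewal-theoretic count of arrivals. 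Once the moment recursion and its solution are in hand, positivity of $m_k$ follows from positivity of the $\ul$ moments established in the Lemma, completing the proof.
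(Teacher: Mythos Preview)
Your approach differs fundamentally from the paper's, and it has real gaps.

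The paper does not use any recursion on inter-arrival times or induction on~$k$. Instead it conditions on the \emph{entire} sequence of immigration times $T=(T_1,T_2,\dots)$ and derives an explicit closed-form product formula for the conditional rising-factorial moments $\IE(D_{k,n}\mid T)$ (Lemma~\ref{lem2}). It then shows directly that $n^{-k\mu/(\mu+1)}\IE(D_{k,n}\mid T)$ converges almost surely to a positive random variable (Lemma~\ref{lem3}, via renewal-type large-deviation estimates on $N_n/n-1/\mu$), and separately establishes the uniform moment bound $\limsup_n n^{-k\mu/(\mu+1)}\IE D_{k,n}<\infty$ (Lemma~\ref{lem4}) needed to pass to the limit by dominated convergence. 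The range $k<(\tfrac p2-1)(\mu+1)-1$ comes out of this moment bound, specifically from controlling the last-passage time of the centered walk $(T_j-j\mu)_{j\geq 0}$ above $j^\alpha$ via the Dharmadhikari--Jogdeo inequality. No part of the proof of Theorem~\ref{THM1} touches the $\ul$ family or any distributional fixed point; that machinery is reserved for Theorem~\ref{THM2}.

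Your recursion, by contrast, does not close: after the first immigration the urn has $b+1$ black balls (and possibly more if $\tau_2=0$), so $g_k(n;b,w)$ is expressed in terms of $g_k(n';b',w')$ with \emph{different} parameters $b'>b$ and a random $w'$. There is no single sequence $g_k(n)$ to induct on, and no limiting equation of the form $Z\eqlaw B^{1/(\mu+1)}Z'$ with $Z'$ an independent copy of $Z$. The actual fixed-point equation in the paper (Proposition~\ref{prop1}) is $\law(X)=\law(V_w X^{(\psi)})$ with $X^{(\psi)}$ a \emph{power-biased} version of $X$, not an independent copy; and the parameter $v$ in~\eq{3} is $b+w-1$, not~$\mu$. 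So both the structure of the recursion and the identification of the limiting relation in your sketch are off, and it is not clear how to repair either without essentially reverting to the paper's conditioning-on-$T$ strategy.
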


We now formulate the main distributional convergence result
which essentially says that when~$b=1$, the scaled urn limits are 
of the form~$\ul\bklr{w;(a_k)_{k\geq 1}}$ for appropriate choice of~$(a_k)_{k\geq 1}$, and when~$b>1$, the limits are in the same family 
up to multiplication by an independent beta variable. 

\begin{theorem}\label{THM2} 
Let~$b$ and~$w$ be positive integers, and let~$\pi$ be a probability distribution on the non-negative integers with mean~$0<\mu\leq \infty$, and let~$\tau\sim\pi$. Assume that either
\begin{enumerate}
\item[$(a)$]~$\IE \tau^p <\infty$ for all~$p\geq 1$, or
\item[$(b)$] there is~$\eps>0$ such that~$\IP[\tau > n] \geq Cn^{-(1-\eps)}$ for~$n$ large enough, 
\end{enumerate}
and let~$m_k(1,b+w-1,\pi)$, $k\geq 1$, be as in Theorem~\ref{THM1}. Set
\ben{\label{8}
	a_k = \frac{\pi_{k-1}}{m_k(1,b+w-1,\pi)}, \qquad \text{for all~$k\geq1$,}
}
and let~$Z\sim\ul\bklr{b+w-1;(a_k)_{k\geq 1}}$. Then 
\ben{\label{9}
 \IE Z^k=m_k(1,b+w-1,\pi),\qquad \text{for all~$k\geq1$,}
}
and, with~$X_n\sim\urnlaw(n,\pi,b,w)$,
\ben{\label{10}
	\law\bbbklr{\frac{X_n}{n^{\mu/(\mu+1)}}}\to\law(BZ)\qquad \text{as~$n\toinf$,}
}
where~$B\sim\Beta(w,b-1)$ is independent of~$Z$.
\end{theorem}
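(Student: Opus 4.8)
\textbf{Proof proposal for Theorem~\ref{THM2}.}

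The plan is to build the limit $BZ$ from the moment sequence $m_k := m_k(1,b+w-1,\pi)$ supplied by Theorem~\ref{THM1}, verify that $Z\sim\ul(b+w-1;(a_k)_{k\geq1})$ has exactly these moments, and then upgrade the moment convergence of Theorem~\ref{THM1} to distributional convergence by checking a Carleman-type determinacy condition. First I would establish \eqref{9}: the defining recursion \eqref{3} for the moments $\mu_k$ of $\ul(v;(a_k))$ with $v=b+w-1$ reads $\mu_k = \frac{v}{v+k}\sum_{l\geq1}a_l\mu_{k+l}$, and plugging in the ansatz $\mu_k = m_k$ together with $a_k = \pi_{k-1}/m_k$ turns this into $m_k = \frac{v}{v+k}\sum_{l\geq1}\pi_{l-1}\frac{m_{k+l}}{m_k}m_{k+l}$\,---\,no, more carefully, into a linear recursion $m_k(v+k) = v\sum_{l\geq1}\pi_{l-1} m_{k+l}/? $; the point is that the correct identity to verify is precisely the recursion that the numbers $m_k(1,b+w-1,\pi)$ themselves satisfy as a consequence of the urn dynamics (a one-step decomposition of the Pólya urn with immigration, passed to the limit). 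So the real content of \eqref{9} is a lemma, proved alongside or inside Theorem~\ref{THM1}, stating that the limiting moments obey $m_k = \frac{v}{v+k}\sum_{l\geq1}a_l m_{k+l}$ with $a_k$ as in \eqref{8}; granting that, $(m_k)$ and $(\mu_k)$ satisfy the same recursion with the same normalization, hence agree, and by the Lemma $Z$ genuinely exists with $\rho>0$ or $\rho=\infty$ because $a_{k}>0$ whenever $\pi_{k-1}>0$, which happens for some $k$ since $\pi_0<1$.

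Next I would handle the factor $B\sim\Beta(w,b-1)$. The heuristic is the classical Pólya-urn fact that starting from $b$ black and $w$ white balls is the same as: first run forever to decide the asymptotic ``white fraction'' among the original $b+w-1$ balls beyond one distinguished black ball, which contributes the $\Beta(w,b-1)$ factor, and then the remaining $b+w-1$ balls behave as a single merged color of initial count $b+w-1$ with one black ball doing the immigration bookkeeping\,---\,this is exactly the reduction from general $(b,w)$ to $(1,b+w-1)$ that makes $a_k$ in \eqref{8} depend only on $b+w-1$. Concretely I would compute $\IE(BZ)^k = \IE B^k\,\IE Z^k = \frac{(w)_k}{(b+w-1)_k}\,m_k(1,b+w-1,\pi)$ using independence and the beta moment formula, and then check that the right-hand side equals $m_k(b,w,\pi)$, the limiting moments delivered by Theorem~\ref{THM1} for the original urn. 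This last equality is again a statement about the constants $m_k$ and should follow from the same one-step urn decomposition used to prove Theorem~\ref{THM1}, specialized to track the split between the original white balls and the rest; alternatively it can be read off from de Finetti/Pólya exchangeability of the original $b+w$ balls.

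Finally, to get \eqref{10} from convergence of all moments \eqref{7}, I would invoke the method of moments: the moments $m_k(b,w,\pi)$ of $BZ$ determine the law of $BZ$ uniquely, because $BZ$ is bounded by $Z$ and $Z\sim\ul(b+w-1;(a_k))$ has, by the density formula \eqref{2}, tails at least as thin as a Weibull-type density $x^{v-1}\exp\{-c\,x^{k_0}\}$ with $k_0\geq1$; such a distribution satisfies Carleman's condition $\sum_k \mu_{2k}^{-1/(2k)}=\infty$, so it is moment-determinate, and then so is $BZ$. Hence convergence of every moment forces convergence in distribution to $BZ$. The main obstacle, in my view, is not the determinacy step (routine once the density bound from the Lemma's proof is in hand) nor the beta-factor bookkeeping (classical), but rather establishing the moment recursion $m_k = \frac{v}{v+k}\sum_l a_l m_{k+l}$ for the limiting constants\,---\,i.e.\ justifying the interchange of the $n\to\infty$ limit with the one-step urn decomposition and the infinite sum over immigration sizes, uniformly enough under hypothesis $(a)$ or $(b)$; this is presumably where the moment bounds from the proof of Theorem~\ref{THM1} are reused, and where the scaling exponent $\mu/(\mu+1)$ and the constants $C$ must be controlled with care.
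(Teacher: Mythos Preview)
Your plan is a method-of-moments approach: prove \eqref{9} via the moment recursion \eqref{3}, then deduce \eqref{10} by moment determinacy. The gap is in the proof of \eqref{9}. Even granting that $(m_k)$ satisfies \eqref{3} with the given $(a_k)$, this does not force $m_k=\mu_k$: the recursion \eqref{3} expresses each $\mu_k$ in terms of all higher $\mu_{k+l}$, so as a linear system it has many solutions (already when only $a_1,a_2>0$ it is a second-order recursion with a one-parameter family of solutions normalised by $\mu_0=1$). Your line ``same recursion, same normalisation, hence agree'' is simply not justified. To get uniqueness you would need at minimum that $(m_k)$ is itself a \emph{determinate} moment sequence --- then one could argue that any $X$ with those moments satisfies \eqref{6} in distribution and invoke Proposition~\ref{prop1}$(ii)$. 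But your determinacy argument is circular: you bound the tails via the density \eqref{2} of $\ul$, which presupposes the limit is $\ul$. In case~$(b)$ there is a rescue ($W_n\le 1+w/n$ gives bounded support), but in case~$(a)$ no independent growth bound on $m_k$ in $k$ falls out of the proof of Theorem~\ref{THM1}, and the paper's flagging of uniqueness for \eqref{6} as open (Remark~\ref{rem2}, Questions~\ref{qn1} and~\ref{qn2}) indicates this circle is genuinely hard to break.

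The paper goes the other way: it establishes \eqref{10} first (for $b=1$) by a distributional fixed-point argument, and \eqref{9} then follows from moment convergence to the already-identified limit. Lemma~\ref{lem7} says that if the moments of $W_n$ converge and one can couple $W_n$ closely to $V_w W_n\s{\pi^*}$ (with $V_w\sim\Beta(w,1)$ independent), then every subsequential limit $W$ satisfies \eqref{6}; Proposition~\ref{prop1}$(ii)$ then pins it down as $\ul\bklr{w;(\pi^*_k/\IE W^k)_{k\geq1}}$, and since $\IE W^k=m_k$ by moment convergence this gives both \eqref{10} and \eqref{9} at once, without ever needing the recursion to have a unique solution. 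The coupling (Lemmas~\ref{lem8}--\ref{lem13}) comes from the urn structure: $\pi^*$-power-biasing $X_n$ corresponds, up to asymptotically negligible error, to starting the urn with $\tau^*\sim\pi^*$ extra white balls, and an embedded classical P\'olya urn then delivers the $\Beta(w,1)$ factor. Your beta-reduction heuristic for $b>1$ is correct and is exactly how the paper finishes; but the $b=1$ identification has to go through the distributional characterisation, not the moment recursion.
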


Our expressions for the moments of~\eq{9} are not explicit and so then neither are the parameters of the limits, which leads to many intriguing open questions; see Section~\ref{sec2}. In the case where~$\pi$ is deterministic, the limiting distributions can be described explicitly in a number of ways, see \cite{Janson2006}, \cite{Pekoz2013a,Pekoz2014}, and so it is interesting that adding randomness in this way leads to limiting distributions that are complicated and difficult to describe.

The moment results of Theorem~\ref{THM1} follow by first deriving formulas
conditional on 
the partial sums~$(T_1,T_2,\ldots)$ of the i.i.d.\ inter-arrival times~$\tau_1,\tau_2,\ldots$,
and then using classical moment and concentration inequalities for such quantities.
The moment 
results show that the sequence~$n^{-\mu/(\mu+1)}X_n$, $n\geq 1$, is 
tight as long as we have either~$\mu<\infty$ and~$\IE \tau^{6/(\mu+1)+2}<\infty$ 
or~$\IP[\tau > n] \geq Cn^{-(1-\eps)}$, and so in these cases,
a distributional limit follows by showing uniqueness of subsequential limits.
For the case~$b=1$, we are able to show that in the two cases just described, any subsequential limit 
is a  fixed point (unique given moments) of a certain distributional transformation
which we describe in Section~\ref{sec2a} below.

The organization of the remainder of the paper is as follows. We finish this section 
with a discussion of related literature and then provide a connection between our model and preferential attachment graphs
with random number of initial attachments for each vertex. In 
Section~\ref{sec2a} we describe the distributional fixed point equation 
used to identify the limits appearing in Theorem~\ref{THM2}.
Our study leads to many further questions, especially around descriptions
of the limits and moment sequences appearing in Theorem~\ref{THM1} and~\ref{THM2}, and so we discuss some of these in Section~\ref{sec2}, where we also list open problems and conjectures.
Section~\ref{sec3}
contains the proof of Theorem~\ref{THM1}, Section~\ref{sec4} has the proof of Theorem~\ref{THM2}, and in Section~\ref{sec:props} we derive some basic properties of the~$\ul$ family.

\subsection{Related Literature}\label{sec:lit}

The literature around P\'olya urn models is too vast for a complete survey, but the the main results and modern techniques are well covered by  \cite{Chauvin2011}, \cite{Chauvin2015}, \cite{Chen2005}, \cite{Chen2013}, \cite{Flajolet2005}, \cite{Janson2004, Janson2006}, \cite{Knape2014}, \cite{Kuba2015a, Kuba2015}, \cite{Laruelle2013}, \cite{Pouyanne2008}, and references therein.  
These papers cover many variations of the standard model, including random replacement rules and drawing multiple balls at a time. Techniques used to study limits include finding appropriate martingales, stochastic approximation, embedding the process into continuous time branching processes, deriving moments or moment generating functions using analytic or algebraic relations derived from the Markovian dynamics of the process, and the contraction method. 
All of these methods rely on a reasonably nice Markovian dynamics and in general, the model studied here is not Markov in its natural time scale. It is possible to make the model Markov by observing the process at the random times of immigration, but then the dynamics are complicated, and so it is challenging to apply the techniques mentioned above.
On the other hand, our distributional fixed point approach is naturally suited to the model and 
 leads to intriguing descriptions of the limiting behavior and to further avenues of study. We leave 
 the question of what can be learned by studying this model with other methods to further work (see Section~\ref{sec2}).

\subsection{Connection to preferential attachment random graph}\label{sec1}
In preferential attachment random graph models, vertices are sequentially added 
and randomly connected to existing vertices 
such that connections to higher degree nodes are more likely.
There are many variations of these popular models;
a good reference is \cite[Chapter~8]{Hofstad2016}.

Consider the following sequence~$(G(n))_{n\geq0}$ of
preferential attachment random graphs.
The initial state~$G(0)$ is a ``seed" graph with~$s$ vertices,
where the degree or ``weight" of vertex~$1\leq i \leq s$ is~$d_i>0$. 
We denote the weight of vertex~$i$ in~$G(n)$ by~$d_i(n)$ so 
note for~$1\leq i\leq s$, $d_i(0)=d_i$.

Let~$\tau_1,\tau_2,\ldots$ be i.i.d.\ distributed according to inter-arrival distribution~$\pi$.
Given the graph~$G(n-1)$ having~$s+n-1$ vertices,
$G(n)$ is formed by adding 
a vertex labeled~$s+n$ and sequentially attaching~$\tau_n$
edges between it and the
vertices of~$G(n-1)$ 
according to the following rules. 
The first edge attaches
to vertex~$k$ with
probability
\ben{\label{11}
\frac{d_k(n-1)}{\sum_{i=1}^{s+n-1}d_i(n-1)}, \qquad 1\leq k\leq n-1;
}
denote by~$K_1$ the vertex which received that first edge. The weight of~$K_1$ is updated immediately, so that
the second edge attaches to vertex~$k$ with probability
\be{
\frac{d_{k}(n-1)+\I[k=K_1]}{1+\sum_{i=1}^{s+n-1}d_i(n-1)}, \qquad 1\leq k\leq n-1.
}
The procedure continues this way,
edges attach with probability proportional to weights at that moment, and additional
received edges add one to the weight of a vertex, until vertex~$n$ has~$\tau_n$ outgoing edges.
Lastly, we set~$d_{s+n}(n)=1$, and let~$G(n)$ be the resulting graph. Note that multiple
edges between vertices are possible.

This model is a randomized version of the ``sequential" model of \cite{Berger2014}; also
the ``N$_\ell$" model of \cite{Pekoz2014a}.
For related models where the 
number of edges are random but the updating rule is not sequential (meaning
each of the~$\tau_n$ edges of vertex~$s+n$ attach with probability~\eq{11})
see \cite{Deijfen2009} and a particular choice of parameters in the general model of \cite{Cooper2003}.

Writing~$c_i:=\sum_{j=1}^i d_j$, 
the connection between the preferential attachment model above and our urn model
 is that for~$1\leq k < s$,
\be{
\law\bbbklr{\sum_{i=1}^k d_i(n)}=\bburnlaw(\sum_{i=1}^{n} \tau_i, \pi, c_s-c_k, c_k),
}
where the~$\tau_i$'s on the right hand side drive the urn process. 
Thus for~$\pi$ having all positive integer moments finite, 
we have (in particular)~$T_n/n:=n^{-1}\sum_{i=1}^n \tau_i \to \mu$ almost surely and so
Theorem~\ref{THM1} implies that for~$k=1,\ldots, s$,
\be{
\law\bbbbklr{\frac{\sum_{i=1}^{k} d_i(n)}{(\mu n)^{\mu/(\mu+1)}}}\to \law(B Z), 
}
where, in accord with Theorem~\ref{THM2}, $Z\sim\ul\bklr{c_s-1,\bklr{\tsfrac{\pi_{k-1}}{m_k}}_{k\geq 1}}$,
$m_k=m_k(1,c_s-1,\pi)$ are the limiting moments given in~\eq{7} of Theorem~\ref{THM1}, and~$B\sim\Beta(c_k,c_s-c_k-1)$ is independent of~$Z$.

For later vertices, if~$k\geq s$, then for~$n\geq k-s+1$,
\ben{\label{12}
\law\bbbklr{\sum_{i=1}^k d_i(n)}=\bburnlaw(\sum_{i=k-s+2}^{n} \tau_i, \pi, 1, {c_s+(k-s)+ T_{k-s+1}}),
}
where again the~$\tau_i$'s on the right hand side drive the urn process. 
Given~$\tau_1, \ldots, \tau_{k-s+1}$, it is still the case that~$n^{-1}\sum_{i=k-s+2}^n \tau_i \to \mu$
and thus 
\be{
\law\bbbbklr{\frac{\sum_{i=1}^{k} d_i(n)}{(\mu n)^{\mu/(\mu+1)}}\Bigg| \bklr{\tau_1,\ldots, \tau_{k-s+1}} }
\to \ul\bklr{c_s+(k-s)+ T_{k-s+1}, \bklr{\tsfrac{\pi_{k-1}}{m_k}}_{k\geq1}},
}
where~$m_k=m_k(1,c_s+(k-s)+ T_{k-s+1},\pi)$ is the limiting moment 
sequence~\eq{7} of Theorem~\ref{THM1}. 
Thus the (unconditional) limiting cumulative degree counts are an appropriate mixture of the
$\ul$ laws.

To our knowledge, these are the first results regarding the degree of fixed vertices in 
preferential attachment models with random initial degrees. The degree of 
a randomly chosen node is studied in \cite{Deijfen2009} and \cite{Cooper2003}.

\section{Distributional fixed point equation}\label{sec2a}
 
To describe the distributional fixed point equation 
used to identify the limits appearing in Theorem~\ref{THM2}, we 
first need a preliminary distributional transformation. 

 \begin{definition}
Let~$\psi$   be a probability distribution concentrated on the non-negative integers, and let~$X$ be a positive random variable such that~$\IE X^{k}<\infty$ for all~$k$ for which~$\psi_k>0$.
A random variable~$X\s{\psi}$ is said to have the \emph{$\psi$-power-bias
distribution of~$X$} if
\ben{\label{5}
	\IE f\bklr{X\s{\psi}} = \sum_{k\,:\,\psi_k>0} \psi_k  \frac{\IE\bklg{ X^{k} f(X)}}{\IE X^{k}}
}
for all~$f$ for which the expectation on the right hand side exists.
 \end{definition}

If~$\psi_1=1$, then the~$\psi$-power-bias 
distribution is commonly known as the \emph{size-bias distribution}; 
see for example \cite{Arratia2013} and \cite{Brown2006}.
If~$\psi_k=1$ for some~$k\geq 2$, then the~$\psi$-power-bias 
distribution is sometimes referred to as the \emph{$k$-power bias distribution}, denoted by~$X\s k$. 
We can realize~$X\s{\psi}$ by first
sampling a random index~$K$ according to~$\psi$, 
and conditional on~$K=k$,  we let~$X\s{\psi}$ have the~$k$-power-bias distribution of~$X$.
This description implies that the~$\psi$-power-bias transformation 
may be amenable to analysis in our setting
since constructing constant~$k$-power bias
distributions is understood in P\'olya urn models 
\cite{Pekoz2013a,Pekoz2013, Pekoz2014}, \cite{Ross2013},
and other discrete probability applications \cite{Barbour1992},
\cite{Chen2011}, \cite{Bartroff2013}.

To establish
 the distributional transformation for which~$\ul\bklr{w;(a_k)_{k\geq1}}$ is a fixed point, note
 first that if~$Z\sim\ul\bklr{w;(a_k)_{k\geq1}}$ and~$\mu_k=\IE Z^k$, then 
\eq{3} yields, in particular,
\be{
	\sum_{k\geq 1} a_k \mu_{k} = 1,
}
so that
\ben{\label{4}
	\psi_k = a_{k} \mu_{k}, \qquad k\geq 1
}
defines a probability distribution on the positive integers, which, by~\eq{3a}, is invariant to scaling of~$Z$. 
The next result gives the~$\ul$ family as fixed point of a distributional transformation; the connection 
between this transformation and the representation~\eq{2} was first made in \cite{Pakes2007}.

\begin{proposition}\label{prop1} The following holds. 
\begin{itemize}\item[$(i)$]  If~$X\sim\ul\bklr{w;(a_k)_{k\geq1}}$ and~$\psi$ is defined as in \eq{4}, then
\ben{\label{6}
	\law(X) = \law\bklr{V_w X\s{\psi}},
}
where~$V_w\sim\Beta(w,1)$ is independent of~$X\s{\psi}$.
\item[$(ii)$] Let~$w>0$, and let~$\psi$ be a probability distribution on the positive integers. If~$X$ is a positive random variable such that~$\IE X^k<\infty$ whenever~$\psi_k>0$ and \eq{6} holds, then~$A(x)$, defined with respect to the sequence~$a_k = \psi_k/\IE X^k$ if~$\psi_k>0$ and~$a_k=0$ otherwise, has positive or infinite radius of convergence and~$X \sim \ul\bklr{w;(a_k)_{k\geq1}}$.
\end{itemize}
\end{proposition}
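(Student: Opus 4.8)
Here is how I would prove Proposition~\ref{prop1}.

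\smallskip
\noindent\emph{Overview and part $(i)$.} Part $(i)$ I would do by matching moments. Since $X\sim\ul\bklr{w;(a_k)_{k\geq1}}$ has all moments finite by the Lemma, $X\s\psi$ is well-defined, and applying \eqref{5} with $f(x)=x^k$ and then \eqref{4} gives $\IE(X\s\psi)^k=\sum_{l:\psi_l>0}\psi_l\mu_{k+l}/\mu_l=\sum_{l\geq1}a_l\mu_{k+l}$, which is finite by \eqref{3}. As $V_w\sim\Beta(w,1)$ satisfies $\IE V_w^k=w/(w+k)$ and is independent of $X\s\psi$, we get $\IE(V_wX\s\psi)^k=\tfrac{w}{w+k}\sum_{l\geq1}a_l\mu_{k+l}=\mu_k$, again by \eqref{3}; so $V_wX\s\psi$ and $X$ have the same moments. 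It then suffices to note that $\ul\bklr{w;(a_k)_{k\geq1}}$ is moment-determinate: this is clear if $\rho<\infty$ (bounded support), and if $\rho=\infty$ the bound $u(x)\leq cx^{w-1}\exp\{-wa_{k_0}x^{k_0}/k_0\}$ from the proof of the Lemma makes $\mu_{2k}^{-1/(2k)}$ of order $k^{-1/k_0}$ with $k_0\geq1$, so $\sum_k\mu_{2k}^{-1/(2k)}=\infty$ and Carleman's criterion applies. (Alternatively one may verify \eqref{6} directly at the level of densities, using that $X\s\psi$ has density $u(x)A(x)$ and that the density of $V_wX\s\psi$ at $y$ is $wy^{w-1}\int_y^\rho x^{-w}u(x)A(x)\,dx=cy^{w-1}\int_y^\rho w\Phi'(x)e^{-w\Phi(x)}\,dx=u(y)$, since $\Phi(\rho^-)=\infty$.)

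\smallskip
\noindent\emph{Part $(ii)$: moments and radius of convergence.} Writing $\mu_k=\IE X^k\in(0,\infty]$ and taking $k$-th moments in \eqref{6} gives, in $[0,\infty]$, $\mu_k=\tfrac{w}{w+k}\sum_{l:\psi_l>0}\psi_l\mu_{k+l}/\mu_l$. Let $l_0=\min\{l:\psi_l>0\}$; Lyapunov's inequality together with $\mu_0=1$ and $\mu_{l_0}<\infty$ gives $\mu_k<\infty$ for $0\leq k\leq l_0$. If some moment were infinite, let $k^*$ be the least such index; then $k^*>l_0$, and evaluating the recursion at $k=k^*-l_0$ (where $\mu_{k^*-l_0}<\infty$) produces on the right-hand side the term $\psi_{l_0}\mu_{k^*}/\mu_{l_0}=\infty$, contradicting finiteness of the left-hand side. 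Hence all $\mu_k<\infty$, the recursion is exactly \eqref{3} with $v=w$ and $a_k=\psi_k/\mu_k$, and since Jensen gives $\mu_k\geq\mu_1^k$ with $\mu_1=\IE X>0$, we get $\limsup_k a_k^{1/k}\leq\limsup_k\mu_k^{-1/k}\leq\mu_1^{-1}<\infty$, so $A(x)=\sum_k a_kx^k$ has radius of convergence $\rho\geq\mu_1>0$.

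\smallskip
\noindent\emph{Part $(ii)$: the integral equation for the density.} Writing $V_w$ through its $\Beta(w,1)$ density and changing variables, \eqref{6} becomes $\IE f(X)=w\int_0^\infty y^{w-1}f(y)h(y)\,dy$ with $h(y):=\IE\bkle{(X\s\psi)^{-w}\,\I[X\s\psi>y]}\leq y^{-w}<\infty$; thus $X$ has density $g(y)=wy^{w-1}h(y)$. Since $X$ has a density, so does $X\s\psi$, and the power-bias formula identifies its density as $g(y)\sum_{l:\psi_l>0}\psi_ly^l/\mu_l=g(y)A(y)$ on $(0,\rho)$; on $(\rho,\infty)$ the series diverges, forcing $g=0$ there, so $\esssup X=:\beta\leq\rho$. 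Then $h(y)=\int_y^\infty z^{-w}g(z)A(z)\,dz$, so $g,h$ are absolutely continuous on compact subintervals of $(0,\rho)$ with $h'(y)=-y^{-w}g(y)A(y)$ a.e., whence $yg'(y)=(w-1)g(y)-wA(y)g(y)$ a.e. on $(0,\rho)$. On $(0,\beta)$, where $g>0$, this integrates to $g(y)=Cy^{w-1}e^{-w\Phi(y)}$. Substituting this back, $h(y)=\tfrac{C}{w}\int_y^\beta w\Phi'(z)e^{-w\Phi(z)}\,dz=\tfrac{C}{w}\bklr{e^{-w\Phi(y)}-e^{-w\Phi(\beta)}}$, and comparing with $g(y)=wy^{w-1}h(y)=Cy^{w-1}e^{-w\Phi(y)}$ forces $\Phi(\beta)=\infty$; since $\Phi<\infty$ on $(0,\rho)$ this gives $\beta=\rho$, then $C=c$ by normalisation, and hence $X\sim\ul\bklr{w;(a_k)_{k\geq1}}$.

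\smallskip
\noindent\emph{Main obstacle.} The delicate part of $(ii)$ is the regularity bookkeeping: manufacturing a density for $X$ out of \eqref{6} (this is exactly where the $\Beta(w,1)$ factor is needed), justifying the differentiation that turns the integral equation into the ODE, and — the genuinely subtle point — excluding $\beta<\rho$, which is what the final self-consistency computation accomplishes. Everything else is routine once finiteness of all moments has been established.
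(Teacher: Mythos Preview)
Your proof is correct. For part~$(i)$, your primary route is different from the paper's: you match moments via~\eqref{3} and then invoke Carleman's criterion for moment determinacy, whereas the paper computes the density of~$V_wX\s\psi$ directly and checks it equals~$u$. Your parenthetical alternative is precisely the paper's argument. The moment approach is slightly longer because it needs moment determinacy, but it has the virtue that it uses only the moment recursion~\eqref{3} already established in the Lemma, with no further density calculus. For part~$(ii)$, the paper establishes the positive radius of convergence exactly as you do, observes that~\eqref{6} forces~$X$ to have a density, and then simply cites \cite[Theorem~3.1]{Pakes2007} for the representation~\eqref{2}. You instead supply a fully self-contained argument: you first show all moments are finite (a point the paper does not need, since it is packaged inside the cited result), then derive the integral equation for the density, convert it to an ODE, integrate, and finally use the self-consistency check to rule out~$\beta<\rho$. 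What this buys is transparency --- in particular, your argument makes explicit that the fixed point equation itself forces~$\Phi(\beta^-)=\infty$ and hence~$\beta=\rho$, which is the genuinely nontrivial structural fact hidden inside the Pakes--Navarro citation. The paper's route is shorter on the page; yours is independent of the external reference and exposes the mechanism.
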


\begin{proof} To prove~$(i)$, assume~$X\sim\ul(w;(a_n)_{n\geq 1})$. Using the formula for the density of products of independent random variables
and denoting the density of~$X\s{\psi}$ by~$u\s{\psi}$, we obtain that~$V_w X\s{\psi}$ has density
\bes{
 	&\int_{x}^\rho w\bbbklr{\frac{x}{t}}^{w-1}\frac{u\s{\psi}(t)}{t} dt 
 	 = \int_{x}^\rho w\bbbklr{\frac{x}{t}}^{w-1}\frac{u(t)}{t}\sum_{n\geq 1} \frac{\psi_n}{\mu_n}t^ndt \\
 	&\qquad = \int_{x}^\rho w\bbbklr{\frac{x}{t}}^{w-1}\frac{u(t)}{t}\sum_{n\geq 1} a_{n}t^ndt
 	= \int_{x}^\rho w\bbbklr{\frac{x}{t}}^{w-1} u(t) \frac{A(t)}{t}dt \\
	&\qquad= x^{w-1}\int_{x}^\rho w\frac{A(t)}{t} \frac{u(t)}{t^{w-1}}dt=u(x).
} 

To prove~$(ii)$, assume~$X$ is a positive random variable such that
~$\IE X^k<\infty$ for all~$k\geq 1$ with~$\psi_{k}>0$, and assume~\eq{6} holds. Since, by Jensen's inequality,
\be{
	\sum_{k\geq 1}\frac{\psi_k}{\IE X^k} x^k\leq
	\sum_{k\geq 1}\frac{1}{(\IE X)^k} x^k <\infty
}
whenever~$x<\IE X~$, the radius of convergence of~$A(x)$ must be at least~$\IE X$, which is positive since~$X$ is positive.
It follows from \eq{6} 
that~$X$ has a density, and the representation~\eq{2}
then follows from  \cite[Theorem 3.1]{Pakes2007}. 
\end{proof}

\begin{remark}\label{rem2} It is important to note that Proposition~\ref{prop1} does not answer the question whether, for given~$w>0$ and probability distribution~$\psi$, there is an~$X$ satisfying \eq{6}. It merely says that, if such~$X$ exists, then it has to be from the family~$\ul(w;(a_k)_{k\geq1})$, where~$(a_k)_{k\geq1}$ can be expressed in terms of~$\psi$ and the moments of~$X$. Note also that, for given~$w$ and~$\psi$, there might a priori be more than one~$(a_k)_{k\geq1}$ satisfying~$\psi_k = a_k\IE X^k$; see the discussion in the next section.
\end{remark}

Proposition~\ref{prop1} suggests that if a random variable~$W$ is such that~$\law(W)$ is close in an appropriate sense to~$\law(V_w W\s{\psi})$, then~$\law(W)$ is close to~$\ul(w,(\psi_k/\IE W^k)_{k\geq1})$. We formalize this as a convergence statement in Lemma~\ref{lem7} in Section~\ref{sec4}. We then apply this result to our urn models, where~$\psi$ has the immigration distribution~$\pi$, but shifted by one, and the limiting moments are those given by Theorem~\ref{THM1}. That the urn law and its transformation are close is achieved by coupling, in particular that power-biasing our urn models corresponds to adding extra white balls before starting the process (Lemma~\ref{lem9}), and that multiplying by a beta corresponds to running a classical P\'olya urn (Lemma~\ref{lem13}); see Section~\ref{sec4} for details.

\section{Open problems}\label{sec2}

We discuss some of the many questions that are not answered by our study.

\begin{question}\label{qn1} Are solutions to the distributional fixed point equation \eq{6} unique up to scaling? 
\end{question}

This is the most pressing open problem, and a positive answer would have a large impact on our understanding of the relation between limits of our urn model and the family of distributions~$\ul\bklr{w;(a_k)_{k\geq 1}}$. The main consequence of a positive answer would be the following ``inversion'' of Theorem~\ref{THM2}.

\begin{conjecture}\label{con1}
Fix~$w$ and~$(a_k)_{k\geq 1}$, and let~$Z\sim\ul\bklr{w;(a_k)_{k\geq1}}$. Then, with~$\pi_k=a_{k+1}\IE  Z^{k +1}$ for~$k\geq 0$, (and possibly further conditions on~$(a_k)_{k\geq1}$), 
the sequence~$X_n\sim\urnlaw(n,\pi,1,w)$ satisfies
\be{
	\law\bbbklr{\frac{X_n}{n^{\mu/(\mu+1)}}} \to \law(\theta Z),
}
where~$\theta = m_1(1,w,\pi)/\IE Z$ with~$m_1(1,w,\pi)$ given by Theorem~\ref{THM1}.
\end{conjecture}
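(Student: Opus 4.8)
The plan is to reduce Conjecture~\ref{con1} to a positive answer to Question~\ref{qn1}; modulo that, the statement follows by assembling Theorems~\ref{THM1} and~\ref{THM2} with Proposition~\ref{prop1}, the remaining steps being routine. First I would check that~$\pi$ (with~$w$ a positive integer, as the urn requires) is a genuine probability distribution satisfying the hypotheses of Theorem~\ref{THM2}. Writing~$\mu_k=\IE Z^k$, relation~\eqref{3} at~$k=0$ gives~$\sum_{l\geq1}a_l\mu_l=\mu_0=1$, so the~$\pi_k=a_{k+1}\mu_{k+1}\geq0$ sum to one; moreover~$\pi$ is exactly the one-shift of the power-bias distribution~$\psi$ of~\eqref{4}, namely~$\psi_k=a_k\mu_k=\pi_{k-1}$ for~$k\geq1$. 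The ``further conditions'' should be read as ensuring~$\pi_0<1$ --- equivalently~$a_k>0$ for some~$k\geq2$, which excludes the degenerate gamma case~$A(x)=a_1x$ and forces~$\mu=\sum_{j\geq2}(j-1)a_j\mu_j\in(0,\infty]$ --- together with enough decay of~$(a_k)$, relative to the growth of~$(\mu_k)$, that~$\tau\sim\pi$ satisfies hypothesis~$(a)$ or~$(b)$ of Theorem~\ref{THM2}.

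With this in hand I would apply Theorem~\ref{THM2} at~$b=1$, so that~$\Beta(w,b-1)=\Beta(w,0)$ is the point mass at~$1$ and the theorem gives
\be{
	\law\bbbklr{\frac{X_n}{n^{\mu/(\mu+1)}}}\to\law(L),\qquad L\sim\ul\bklr{w;(\tilde a_k)_{k\geq1}},\quad\tilde a_k=\frac{\pi_{k-1}}{m_k},
}
where~$m_k=m_k(1,w,\pi)$ are the moments furnished by Theorem~\ref{THM1} and, by~\eqref{9},~$\IE L^k=m_k$ for every~$k\geq1$. It then remains only to identify~$\law(L)$ with~$\law(\theta Z)$.

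The crux is to exhibit~$L$ and~$Z$ as solutions of one and the same fixed point equation. By Proposition~\ref{prop1}$(i)$ applied to~$L$, whose~$\ul$-parameters are~$\tilde a_k$ and whose moments are~$m_k$, the law of~$L$ solves~\eqref{6} with power-bias distribution~$\tilde\psi_k=\tilde a_k m_k=\pi_{k-1}$. By Proposition~\ref{prop1}$(i)$ applied to~$Z$, the law of~$Z$ solves~\eqref{6} with~$\psi_k=a_k\mu_k=\pi_{k-1}$. Hence~$L$ and~$Z$ are positive random variables with finite moments of all orders solving the \emph{same} instance of~\eqref{6} (same~$w$, same shifted~$\pi$). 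A positive answer to Question~\ref{qn1} then forces~$L\eqlaw cZ$ for some~$c>0$; taking first moments gives~$c=\IE L/\IE Z=m_1/\IE Z=\theta$, which is the assertion.

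The main obstacle --- indeed essentially the only one --- is Question~\ref{qn1}. The needed input can be recast in terms of moments: by~\eqref{3}, both~$(m_k)_{k\geq0}$ and~$(\mu_k)_{k\geq0}$ satisfy the \emph{same} nonlinear relation~$x_k=\tsfrac{w}{w+k}\sum_{l\geq1}\tsfrac{\pi_{l-1}}{x_l}x_{k+l}$ with~$x_0=1$, so what one really wants is that this relation pins the sequence down up to the choice of~$x_1$; one would then have~$m_k=\theta^k\mu_k$, hence~$\tilde a_k=\theta^{-k}a_k$, and~\eqref{3a} identifies~$\law(L)$ with~$\law(\theta Z)$ directly. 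Both formulations are open, and a resolution valid only within the~$\ul$ family, or under the moment hypotheses of the first step, would already establish the conjecture in the stated generality.
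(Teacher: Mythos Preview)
Your reduction is exactly the one the paper itself gives: Conjecture~\ref{con1} appears in the open-problems section as the would-be consequence of a positive answer to Question~\ref{qn1}, and the paper offers no proof beyond that remark. Your proposal correctly fills in the routine details of that reduction and, like the paper, isolates the uniqueness-up-to-scaling of solutions to~\eqref{6} as the sole obstruction.
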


It is clear that, for any~$(a_k)_{k\geq1}$ with positive or infinite radius of convergence, we can define the probability distribution~$\pi_k=a_{k+1}\IE  Z^{k +1}$, $k\geq 0$, and consider the limit of the corresponding urn model. But unless the solution to~$\eq{6}$ is unique up to scaling, our method of proof does not guarantee that the corresponding urn limit is a scaling of the one given by this~$(a_k)_{k\geq1 }$.

The following question recasts Question~\ref{qn1} differently; it must have a positive answer if Question~\ref{qn1} has a negative answer. 

\begin{question}\label{qn2} Fix~$w>0$. Are there two sequences~$(a_k)_{k\geq 1}$ and~$(\tilde a_k)_{k\geq 1}$ such that~$Z\sim\ul\bklr{w;(a_k)_{k\geq1}}$ and~$\tilde Z\sim\ul\bklr{w;(\tilde a_k)_{k\geq1}}$ are not scaled versions of each other, but such that~$a_k\IE Z^k = \tilde a_k \IE \tilde Z^k$ for all~$k\geq 1$? \end{question}

If Question~\ref{qn2} could be answered positively, then we would have a counter example to Conjecture~\ref{con1} --- both~$(a_k)_{k\geq 1}$ and~$(\tilde a_k)_{k\geq 1}$ would give rise to the same immigration distribution, but the corresponding urn model could converge to at most one of them.

One issue with Theorem~\ref{THM1} is 
that the limiting moments~$m_k$ are defined rather indirectly, and they are are difficult to calculate explicitly; the same comment applies
to moments of the~$\ul$ family.

\begin{question} Are there a more explicit formulas for~$m_k$ in
Theorem~\ref{THM1} in terms of~$w$ and~$\pi$; or for the moments of~$\ul\bklr{w;(a_k)_{k\geq1}}$ in terms of~$w$ and~$(a_k)_{k\geq 1}$?
\end{question}
%
%
There are a few examples where we can make explicit calculations and partially address this last question; see the end of this section.
%
%
%
%

A natural example that we have struggled to prove anything more specific 
 about than the conclusion of Theorem~\ref{THM2}, is for~$\pi$ a positive geometric variable. In this case the urn model can be described as follows: at each step, a P\'olya urn step is performed and then a~$p$-coin is tossed to determine if an additional black ball is added to the urn. So the process is Markovian, which could make more detailed analyses possible. 

\begin{question} 
What is a concrete description of the distributional limit of~$\urnlaw(n,\pi,1,w)$ (properly scaled) when~$\pi$ is a positive geometric distribution (support starting at~$1$)? 
\end{question}

%
%
%
\begin{question} There are a large number of ways the model can be generalized: more colors, different replacement rules. What can be said in these cases?
\end{question}
\begin{question} Can other methods, such as those described in Section~\ref{sec:lit}, be applied to strengthen our results? For example, if appropriate martingales can be found, then the convergence can be strengthened to almost sure and in~$L_p$ for appropriate~$p$.
\end{question}
We conclude this section with three examples.

\subsection{Explicit choices of~$\pi$}\label{sec:2a}

The relationship between the sequences~$(a_k)_{k\geq1}$ and~$\pi$ appearing in Theorem~\ref{THM2}
is rather implicit, and so in this section, we work out some examples where explicit calculations are possible.

\begin{example}[Deterministic~$\pi$]
If~$\pi_k=1$ for some~$k\geq1$, then the 
scaled limit of~$\urnlaw(n,\pi,1,w)$ has density proportional to
\be{
x^{w-1} \exp\{-w x^{k+1}/((k+1) m_{k+1}) \} dx,
}
which is the same as an appropriately scaled, standard gamma variable with parameter~$w/(k+1)$,
raised to the power~$1/(k+1)$.
For~$k=1$, the urn model is a time homogeneous triangular urn and
the limit can be read from \cite{Janson2006}. 
The general case is studied in detail in~\cite{Pekoz2014},
where rates of convergence to the limit are also provided. 
The limiting moments can be made explicit as well as the
constant~$m_{k+1}$.
\end{example}

\begin{example}[Bernoulli inter-arrival distribution]
We study~$\urnlaw(n,\pi,1,w)$ where~$\pi_0=1-\pi_1\not=1$. 
Note that for this choice of~$\pi$, at each step a P\'olya urn step is performed
and then a geometric with parameter~$\pi_1$ (support started at~$0$) distributed number of black balls
are added to the urn.
In the spirit of
Conjecture~\ref{con1}, we start with a positive integer~$w$ and positive numbers~$a_1$ and~$a_2$,
and then use these to determine~$\pi$.

First, define the function~$U$ for 
$a>0$, $z>0$ and~$b\in\IR$ by
\be{ 
U(a, b, z)=\frac{1}{\Gamma(a)} \int_0^\infty e^{-zt} t^{a-1} (1+t)^{b-a-1} dt.
}
This function is known as Kummer U (also called the confluent hypergeometric function of the second kind; see
\cite[13.2.5]{Abramowitz1964}).
Second, we calculate the normalising constant~$c$ in~\eq{2}; one can show that
\be{
	\int_0^\infty x^{w-1} 
	\exp\left\{-w\left(a_1 x+ a_2 x^2/2\right)\right\} dx=
	\frac{\Gamma(w) U\bklr{\tsfrac{w}{2}, \tsfrac{1}{2}, 	
	\tsfrac{a_1^2 w }{2 a_2}}}{(2a_2 w)^{w/2}},
}
so that
\ben{\label{13}
u(x) = \frac{(2a_2 w)^{w/2}}{\Gamma(w) U\bklr{\tsfrac{w}{2}, \tsfrac{1}{2}, \tsfrac{a_1^2 w }{2 a_2}}}
 x^{w-1} 
\exp\bklg{-w\left(a_1 x+ a_2 x^2/2\right)} \qquad\text{ for~$x>0$.}
}
Third, we calculate the relevant moments and obtain
\ben{\label{13a}
\IE  Z=\frac{w a_1 }{2a_2}\cdot \frac{U\bklr{\frac{w}{2}+1,\frac{3}{2},\frac{w a_1^2 }{2a_2}}}{U\bklr{\frac{w}{2},\frac{1}{2},\frac{w a_1^2  }{2a_2}}},
\qquad
\IE  Z^2=\frac{1 + w}{2 a_2}\cdot\frac{U\bklr{\frac{w}{2}+1, \frac{1}{2}, \frac{w a_1^2 }{2a_2}}}
{U\bklr{\frac{w}{2}, \frac{1}{2},  \frac{w a_1^2 }{2a_2}}},
}
Putting this together we obtain
\ben{\label{14} 
\pi_0=1-\pi_1=\frac{w a_1^2  }{2a_2}\cdot \frac{U\bklr{\frac{w}{2}+1,\frac{3}{2},\frac{w a_1^2 }{2a_2}}}{U\bklr{\frac{w}{2},\frac{1}{2},\frac{w a_1^2  }{2a_2}}}
=\frac{U\bklr{\frac{w+1}{2},\frac{1}{2},\frac{w a_1^2 }{2a_2}}}{U\bklr{\frac{w+1}{2},\frac{3}{2},\frac{w a_1^2  }{2a_2}}}.
}
The second equality of~\eq{14} follows by applying the identity~$U(a,b,z)=z^{1-b} U(1+a-b, 2-b, z)$ (see \cite[13.1.29]{Abramowitz1964})
to both the numerator and the denominator of the middle expression of~\eq{14} with~$z=wa_1^2/(2a_2)$
and~$a=w/2+1$ and~$b=3/2$, respectively, $a=w/2$ and~$b=1/2$. As a check on~\eq{13a}, we can see directly that 
\be{
\pi_0+\pi_1=a_1 \IE  Z+a_2 \IE  Z^2=1
}
from \cite[13.4.18]{Abramowitz1964} with~$a=w/2+1$, $b=1/2$, and~$z=(wa_1^2)/(2a_2)$.

Theorems~\ref{THM1} and~\ref{THM2} give moment and distributional convergence
results for the urn model with inter-arrival distribution~$(\pi_0,\pi_1)$. Furthermore,
it is possible to show directly that for fixed~$w$, the function on positive pairs of numbers
\be{
	(a_1, a_2) \mapsto \pi_0
}
is surjective on~$(0,1)$; hence, every inter-arrival distribution concentrated on~$\{0,1\}$ can be generated
by starting with an appropriate~$a_1$ and~$a_2$.
Finally, we note that Conjecture~\ref{con1} is verified in this case since if 
\be{
\tilde a_1 \IE  \tilde Z=a_1 \IE Z,
\qquad
\tilde a_2 \IE  \tilde Z^2=a_2 \IE Z^2,
}
then~\eq{13a} implies that~$\tilde a_1^2/\tilde a_2=a_1^2/a_2$, which implies
$\tilde a_1^2/a_1^2 = \tilde a_2/a_2=:\theta^2$ (which is the same as the conjecture, noting~\eq{3a}).
\end{example}

\begin{conjexample}[Power law inter-arrival distribution]
Let~$\alpha$ and~$\beta$ be positive numbers, and set~$a=(\beta\alpha^{-1},\beta\alpha^{-2}, \beta\alpha^{-3}, \ldots)$. Then for~$0<x<\alpha$,
\be{
\sum_{k\geq 1} a_k x^k/k=-\beta \log(1- x/\alpha).
}
Thus, if~$Z\sim\ul(w; (a_k)_{k\geq1})$ has density given by~\eq{2} with~$v=w$, we find~$\law( \alpha^{-1} Z)=\Beta(w, w\beta+1)$
and that
\be{
\IE  Z^j=\alpha^{j} \frac{ \Gamma(w(\beta+1) +1)\Gamma(w+j)}{\Gamma(w)\Gamma(w(\beta+1) +j+1)}.
}
Following the blueprint of Conjecture~\ref{con1}, define for~$j=0,1,\ldots,$
\ben{\label{42}
\pi_j=a_{j+1}\IE  Z^{j+1}=\beta \frac{ \Gamma(w(\beta+1) +1)\Gamma(w+j+1)}{\Gamma(w)\Gamma(w(\beta+1) +j+2)}.
}
These calculations suggest that if~$\alpha$, $\beta$ and~$w$ are positive numbers, and~$\pi$ has distribution
given by~\eq{42},
then 
there is a constant~$\theta>0$ such that,
for~$X_n \sim \urnlaw(n,\pi,1,w)$ and as~$n\to\infty$,
\be{
	\law\bklr{ n^{-\frac{\mu}{\mu+1}}X_n} \to
	 \law(\theta  Z),
}
where~$\law(\alpha^{-1}   Z)= \Beta(w,w\beta+1)$
and~$\mu$ denotes the mean of~$\pi$ given by
\be{
\mu=
\begin{cases}
	(w+1)\left(\beta w-1\right)^{-1} & \text{if~$w\beta >1$,} \\
\infty & \text{if~$w\beta \leq 1$.}
\end{cases}
}

The previous statement is conjectural for two reasons. In the case that~$w\beta>1$, $\pi$ has finite mean but not all moments finite, so even convergence in this case is not covered by Theorem~\ref{THM2}. For~$w\beta<1$, Theorem~\ref{THM2} applies and says that 
$\law\bklr{ n^{-\frac{\mu}{\mu+1}}X_n}$ converges in distribution to
$\ul(w, (\pi_k/m_k(1,w,\pi))_{k\geq1})$, but without a result like Conjecture~\ref{con1},
we cannot conclude that~$\pi_k/m_k(1,w,\pi)=\theta^k \beta \alpha^{-k}$ for some~$\theta>0$.

Also note that~$\beta\to0$ roughly corresponds to~$\pi_k=0$ for all~$k$,
and so~$\tau=\infty$,
which should behave as a classical P\'olya urn, and indeed 
the conjectured limit tends to the anticipated~$\Beta(w,1)$.
In general the~$\pi$ distribution in this case is heavy-tailed and the conjecture
suggests that extra balls are not added with enough frequency
to get too far away from the classical P\'olya urn.
\end{conjexample}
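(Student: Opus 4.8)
The statement splits into a deterministic part — the identification of $\law(\alpha^{-1}Z)$, the moments $\IE Z^j$, the immigration law $\pi$, and the mean $\mu$ — which can be proved outright, and a conjectural part — the distributional convergence — for which one can only record what follows from Theorems~\ref{THM1} and~\ref{THM2} and what is genuinely open. I would organise the write-up along these lines.

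First I would carry out the explicit computations. For $0<x<\alpha$ one has $\Phi(x):=\sum_{k\geq1}\tfrac{a_k}{k}x^k=\beta\sum_{k\geq1}\tfrac1k(x/\alpha)^k=-\beta\log(1-x/\alpha)$, so the radius of convergence is $\rho=\alpha$ and~\eq{2} with $v=w$ reduces to $u(x)=c\,x^{w-1}(1-x/\alpha)^{w\beta}$ on $(0,\alpha)$; the substitution $y=x/\alpha$ identifies this as the density of $\alpha$ times a $\Beta(w,w\beta+1)$ variable, hence $\law(\alpha^{-1}Z)=\Beta(w,w\beta+1)$. The formula for $\IE Z^j$ is then the beta-moment identity $\IE Z^j=\alpha^j B(w+j,w\beta+1)/B(w,w\beta+1)$ rewritten with Gamma functions, and $\pi_j=a_{j+1}\IE Z^{j+1}$ follows by multiplying. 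Both $\sum_{j\geq0}\pi_j=1$ and the evaluation of $\mu=\sum_{j\geq0}j\pi_j$ reduce to the telescoping identity
\be{
\sum_{j\geq0}\frac{\Gamma(a+j)}{\Gamma(b+j)}=\frac{1}{b-a-1}\cdot\frac{\Gamma(a)}{\Gamma(b-1)}\qquad(b>a+1),
}
applied with $(a,b)=(w+1,\,w(\beta+1)+2)$ for the normalisation and, after writing $j=(w+1+j)-(w+1)$, with $(a,b)=(w+2,\,w(\beta+1)+2)$ and $(a,b)=(w+1,\,w(\beta+1)+2)$ for $\sum_j j\pi_j$; the factor $w\beta-1$ appearing in the first of these denominators (so that one needs $w\beta>1$) is exactly what produces the dichotomy for $\mu$, and Stirling's formula gives $\pi_j\sim Cj^{-w\beta-1}$, that is, $\IP[\tau>n]\sim C'n^{-w\beta}$.

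Next I would treat the convergence claim. When $w\beta<1$, the tail estimate $\IP[\tau>n]\sim C'n^{-w\beta}$ verifies hypothesis~$(b)$ of Theorem~\ref{THM2} for any $\eps\in(0,1-w\beta)$, so $\law(n^{-\mu/(\mu+1)}X_n)\to\law(Z')$ with $Z'\sim\ul\bklr{w;(\pi_k/m_k(1,w,\pi))_{k\geq1}}$ and $B\sim\Beta(w,0)$ a point mass at~$1$; the limit is thus \emph{some} member of the $\ul(w;\cdot)$ family, but because $(a_k)_{k\geq1}$ and $(\pi_k/m_k(1,w,\pi))_{k\geq1}$ are by construction two coefficient sequences giving the same immigration law $\pi$, deducing that $Z'$ is a scaling of $Z$ is precisely a positive answer to Question~\ref{qn1} (equivalently, Conjecture~\ref{con1}) in this instance. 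When $w\beta\geq1$ one cannot even quote convergence: $\pi$ has finite moments only of order $<w\beta$, so hypothesis~$(a)$ of Theorem~\ref{THM2} fails, and its tail decays like $n^{-w\beta}$ with $w\beta\geq1$, so hypothesis~$(b)$ fails too; Theorem~\ref{THM1}$(a)$ still yields convergence of the $k$-th moment for $k<(\tfrac{p}{2}-1)(\mu+1)-1$ with $p<w\beta$, but only for finitely many $k$ (and for none at all unless $w\beta$ is large). As a consistency check, $\beta\to0$ sends every $a_k\to0$ and $\Beta(w,w\beta+1)\to\Beta(w,1)$, the classical P\'olya urn limit, as claimed.

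The calculational part is routine. The real obstacle — and the reason the full statement is only conjectural — is two-fold: the uniqueness of fixed points of~\eq{6} (Question~\ref{qn1}), needed to pin down the limit even when $w\beta<1$ and Theorem~\ref{THM2} does apply; and the absence of a convergence theorem covering inter-arrival distributions that are heavy-tailed but have finite mean, which is exactly the regime $w\beta>1$ of greatest interest here. I would expect a complete proof to require progress on both, the fixed-point uniqueness being the deeper issue.
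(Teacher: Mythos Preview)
Your proposal is correct and follows the same line as the paper, which simply states the calculations without justification; you have supplied the missing verifications (the telescoping identity for $\sum_j\pi_j$ and $\mu$, the Stirling tail estimate $\IP[\tau>n]\sim C'n^{-w\beta}$, and the explicit check of which hypothesis of Theorem~\ref{THM2} applies or fails) that the paper leaves to the reader. Your diagnosis of the two obstructions --- fixed-point uniqueness for $w\beta<1$ and the absence of any convergence theorem for $w\beta\geq1$ --- is exactly the paper's own explanation of why the example is only conjectural.
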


\section{Proof of Theorem~\ref{THM1}}\label{sec3}

We will show the following result, which is the analogue of Theorem~\ref{THM1}, but for factorial moments. In what follows, we interpret products~$\prod_{j=a}^b$ as~$1$ whenever~$b<a$.

\begin{proposition}\label{thm1}
Let~$b$ and~$w$ be positive integers, let~$\pi$ be a probability distribution on the non-negative integers with mean~$0<\mu\leq \infty$, and let~$\tau\sim\pi$. Let~$X_n\sim\urnlaw(n,\pi,b,w)$, and set 
\be{
	D_{k,n}=\prod_{j=0}^{k-1} \left(X_n+j\right),
	\qquad \text{$k\geq 1$, $n\geq 0$.}
}
If either
\begin{enumerate}
\item[$(a)$]~$\IE \tau^p <\infty$ for some~$p>1$, and~$1\leq k < (\frac{p}{2}-1)(\mu+1)-1$ , or
\item[$(b)$] there is~$\eps>0$ such that~$\IP[\tau > n] \geq Cn^{-(1-\eps)}$ for~$n$ large enough, and~$k\geq 1$, 
\end{enumerate}
then there is a positive constant~$m_k(b,w,\pi)$ such that
\ben{\label{15}
\IE\bbbklg{\frac{D_{k,n}}{n^{k\mu/(\mu+1)}}} \to  m_k(b,w,\pi)\qquad\text{as~$n\to\infty$}.
}
\end{proposition}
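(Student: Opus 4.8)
\textbf{Proof plan for Proposition~\ref{thm1}.}

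The plan is to work conditionally on the arrival times~$(T_1,T_2,\ldots)$, or equivalently on the inter-arrival sequence~$(\tau_1,\tau_2,\ldots)$, and to derive an exact recursion for the conditional factorial moments~$\IE[D_{k,n}\mid (T_j)_j]$. The key observation is that, given the arrivals, the urn dynamics become a (time-inhomogeneous) classical P\'olya urn: at step~$n$, if the current composition has~$X_n$ white and~$Y_n$ black balls, then drawing and replacing a white ball has probability~$X_n/(X_n+Y_n)$, and the total number of balls after step~$n$ is a deterministic function of~$n$ and the number of arrivals~$N_n:=\#\{j:T_j\le n\}$ up to time~$n$, namely~$b+w+n+N_n$ (plus correction if~$\tau_1=0$). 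Standard P\'olya urn algebra then gives a clean one-step recursion: writing~$S_n = b+w+n + N_n$ for the total ball count, one has
\be{
\IE\bklel{D_{k,n}\bmid \mathcal{F}_{n-1}} = D_{k,n-1}\bbbklr{1 + \frac{k}{S_{n-1}}},
}
where~$\mathcal{F}_{n-1}$ is the sigma-algebra generated by the draws up to step~$n-1$ and the full arrival sequence, and a parallel but slightly more involved relation handles the jump at an arrival time (adding a black ball does not change~$X_n$ but does change~$S_n$). Iterating, $\IE[D_{k,n}\mid (T_j)_j]$ is, up to lower-order corrections, the initial value times a product~$\prod_{j=1}^{n}(1 + k/S_{j-1})$, which in turn behaves like~$\exp\{k\sum_{j} 1/S_{j-1}\}$.

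The second step is asymptotic analysis of the random product/sum. Since~$N_n/n\to 1/\mu$ a.s.\ when~$\mu<\infty$ (and~$N_n = o(n)$ when~$\mu=\infty$), we have~$S_n \sim (1+1/\mu) n = \frac{\mu+1}{\mu} n$ a.s., so~$\sum_{j=1}^n 1/S_{j-1} \sim \frac{\mu}{\mu+1}\log n$, which produces exactly the scaling exponent~$k\mu/(\mu+1)$ in~\eq{15}. The heart of the matter is to upgrade this almost-sure asymptotic to convergence of~$n^{-k\mu/(\mu+1)}\IE[D_{k,n}\mid(T_j)_j]$ to a nondegenerate random limit~$\Lambda_k$, and then to take expectations: $m_k(b,w,\pi) = \IE \Lambda_k$. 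Convergence of the normalized conditional moment should follow from writing the log of the product as~$k\sum 1/S_{j-1}$ plus an error term that is~$o(1)$ a.s.\ (using~$\log(1+x) = x + O(x^2)$ and~$\sum 1/S_j^2 <\infty$), and then showing that~$\sum_{j=1}^n (1/S_{j-1} - \frac{\mu}{(\mu+1)j})$ converges a.s.; this last step is a renewal-theory estimate on the fluctuations of~$N_j - j/\mu$. Positivity of~$m_k$ is immediate since~$\Lambda_k>0$ a.s.

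The main obstacle — and the reason the moment hypotheses in~(a) and~(b) take the peculiar forms they do — is passing from conditional convergence to unconditional convergence of~$\IE[D_{k,n}]/n^{k\mu/(\mu+1)}$, which requires a uniform integrability / dominated convergence argument for the family~$\{n^{-k\mu/(\mu+1)}\IE[D_{k,n}\mid (T_j)_j]\}_n$. The conditional moment is roughly~$\exp\{k\sum 1/S_{j-1}\} = \prod_j(1+k/S_{j-1})$, and when the~$\tau_i$ have heavy tails, $N_n$ can be atypically small on an event of non-negligible probability, making~$S_{j-1}$ small and the product large; one must show these large deviations of the renewal process do not blow up the expectation. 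Quantitatively, controlling~$\IE\exp\{k\sum_{j\le n} 1/S_{j-1}\}$ reduces to a moment bound on~$\exp\{c\sum_j 1/(j - \text{(deficit in arrivals)})\}$, and this is where~$\IE\tau^p<\infty$ with the constraint~$k < (\frac p2 - 1)(\mu+1) - 1$ enters: it is precisely the condition under which the relevant negative moments of partial sums / first-passage quantities are finite to the required order (via a Markov/Rosenthal-type inequality applied to~$T_m - m\mu$). In regime~(b), the lower bound on the tail~$\IP[\tau>n]\ge Cn^{-(1-\eps)}$ forces~$\mu=\infty$, so the scaling exponent is~$1$, $S_n$ grows essentially linearly with a slowly-varying correction, and the conditional moments are correspondingly tame, so a cruder domination suffices and all~$k$ are allowed. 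I expect the bookkeeping for the jump terms at arrival times and the renewal fluctuation estimates to be routine but lengthy; the genuinely delicate point is the uniform integrability bound under hypothesis~(a).
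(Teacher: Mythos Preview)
Your proposal is correct and follows essentially the same approach as the paper: derive an exact product formula for the conditional factorial moments $\IE[D_{k,n}\mid T]$ (the paper's Lemma~\ref{lem2}), establish almost-sure convergence of the normalized product via renewal fluctuation estimates on $N_n - n/\mu$ (Lemma~\ref{lem3}), and pass to unconditional convergence by dominated convergence using moment bounds obtained from a Dharmadhikari--Jogdeo/Rosenthal-type inequality (Lemmas~\ref{lem4} and~\ref{lem6}). The paper's uniform integrability argument in case~(a) is organized around a splitting on the event $\{|N_{n-1}-(n-1)/\mu|\le u_n\}$ together with a moment bound on a last-passage time $U_\alpha=\sup\{j:T_j>j\mu+j^\alpha\}$, while case~(b) is handled by the trivial domination $D_{k,n}\le (n+w+k)^k$, exactly as you anticipate.
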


To prove the proposition we need some lemmas. We first establish a moment formula for~$D_{k,n}$, conditional on the immigration times~$T_1,T_2,\dots$.

\begin{lemma}\label{lem2} Let~$D_{k,n}$ be as in Proposition~\ref{thm1}, let~$T=(T_1, T_2,\ldots)$ be the sequence of immigration times of the process, and for~$n\geq 0$, let~$N_n=\#\{i\geq 1: T_i \leq n\}$, the number of immigrations up to and including draw~$n$.
Then, for any~$k\geq 1$ and~$n\geq 1$,
\ban{
	&\IE( D_{k,n} | T )
	=\frac{\Gamma(w+k)}{\Gamma(w)}  \prod_{j=0}^{n-1}\frac{b+w+k+j+N_j}{b+w+j+N_j} \label{16} \\ 
	&\qquad= \frac{\Gamma(w+k)\Gamma(b+w)}{\Gamma(w)\Gamma(b+w+k)}
\frac{\Gamma(b+w+N_{n-1}+n+k)}{\Gamma(b+w+N_{n-1}+n)}
\prod_{j=1}^{N_{n-1}}\frac{b+w+j-1+T_j}{b+w+j-1+T_j+k}.\label{17}
}
\end{lemma}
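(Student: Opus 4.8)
The plan is to derive the conditional moment formula \eqref{16} directly from the urn dynamics by a careful bookkeeping argument, and then obtain \eqref{17} from \eqref{16} by a purely algebraic manipulation of the Gamma/product factors. I first recall the key fact about classical P\'olya urns: if between draws $j$ and $j+1$ we add one ball of the drawn colour and then, separately, a deterministic number of black balls, then the effect of one ``ordinary'' P\'olya step on the conditional falling-factorial-type product $D_{k,n}=\prod_{j=0}^{k-1}(X_n+j)$ is to multiply its conditional expectation by a factor involving the current total number of balls. Concretely, writing $W_j$ for the number of white balls and $S_j$ for the total number of balls in the urn just before draw $j+1$, a standard computation shows
\be{
\IE\bbklr{\prod_{i=0}^{k-1}(W_{j+1}+i)\,\big|\,\mathcal F_j} = \prod_{i=0}^{k-1}(W_j+i)\cdot\frac{S_j+k}{S_j},
}
because on the event that white is drawn the product gains the factor $(W_j+k)/(W_j)$ times the drawing probability $W_j/S_j$, and on the event that black is drawn it is unchanged with probability $(S_j-W_j)/S_j$; summing gives $1+kW_j/(S_j W_j)\cdot W_j = (S_j+k)/S_j$ after simplification. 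Crucially, adding black balls does not change the white-ball product $D$ at all; it only changes the total $S_j$, which is why conditioning on the immigration times $T$ (equivalently on $N_0,N_1,N_2,\dots$) makes the recursion deterministic. Since the total number of balls just before draw $j+1$ is $b+w+j+N_j$ (the initial $b+w$, plus one per each of the $j$ draws so far, plus $N_j$ immigrated black balls), iterating the recursion from $j=0$ to $n-1$ and using $\IE(D_{k,0})=\prod_{i=0}^{k-1}(w+i)=\Gamma(w+k)/\Gamma(w)$ yields \eqref{16} immediately.

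For the second equality, I would split the telescoping product $\prod_{j=0}^{n-1}\frac{b+w+k+j+N_j}{b+w+j+N_j}$ according to the values of $N_j$. Since $N_j$ is a nondecreasing integer-valued step function that jumps by $+1$ exactly at $j=T_1,T_2,\dots$ (with multiplicities if some $\tau_i=0$), write $\beta=b+w$ and note $N_j = \max\{i: T_i\le j\}$. The idea is to reorganize the product over $j$ into a product over ``levels'': for a fixed value $\ell$ of $N_j$, the set of indices $j$ with $N_j=\ell$ is a contiguous block, and on such a block the factor $\frac{\beta+k+j+\ell}{\beta+j+\ell}$ telescopes in $j$. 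Carrying this out, each completed level contributes a ratio of the form $\frac{\beta+\ell-1+T_\ell+k}{\beta+\ell-1+T_\ell}$ coming from the ``boundary'' terms where one level ends and the next begins (this is where $T_j$ enters: the block for level $\ell-1$ ends just before $j=T_\ell$), and the final, possibly incomplete level $N_{n-1}$ contributes $\frac{\Gamma(\beta+N_{n-1}+n+k)}{\Gamma(\beta+N_{n-1}+n)}\cdot\frac{\Gamma(\beta+k)}{\Gamma(\beta)}\cdot\frac{\Gamma(w)\Gamma(\beta+k)^{-1}\cdots}{}$ — more precisely, after assembling all the telescoped pieces and the prefactor $\Gamma(w+k)/\Gamma(w)$, the powers of $\Gamma$ rearrange into $\frac{\Gamma(w+k)\Gamma(b+w)}{\Gamma(w)\Gamma(b+w+k)}\cdot\frac{\Gamma(b+w+N_{n-1}+n+k)}{\Gamma(b+w+N_{n-1}+n)}\cdot\prod_{\ell=1}^{N_{n-1}}\frac{b+w+\ell-1+T_\ell}{b+w+\ell-1+T_\ell+k}$, which is \eqref{17}. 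The one genuinely delicate point is handling coincident immigration times ($\tau_i=0$, so $T_i=T_{i+1}$): then a ``level'' $N_j=\ell$ may be an empty block of $j$-values, but the telescoping still goes through because an empty block contributes the empty product $1$, and one checks that the corresponding boundary factor in \eqref{17}, namely $\frac{b+w+\ell-1+T_\ell+k}{b+w+\ell-1+T_\ell}$ with $T_\ell=T_{\ell+1}$, correctly cancels against the adjacent factor. It is worth also verifying the edge case $N_{n-1}=0$ (no immigration before draw $n$), where \eqref{17} collapses to $\frac{\Gamma(w+k)}{\Gamma(w)}\cdot\frac{\Gamma(b+w)}{\Gamma(b+w+k)}\cdot\frac{\Gamma(b+w+n+k)}{\Gamma(b+w+n)}$, the classical P\'olya-urn moment, as it must.

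The main obstacle I anticipate is purely organizational rather than conceptual: getting the index bookkeeping in the level-by-level telescoping exactly right, in particular tracking which $T_\ell$ (and with which shift $b+w+\ell-1$) appears at each level boundary, and confirming that the argument is robust to empty levels arising from zero inter-arrival times. I would guard against off-by-one errors by checking the formula against small explicit cases (e.g.\ $n$ small with the sample path $(\tau_1,\dots,\tau_5)=(1,3,0,0,4)$ from the introduction) before trusting the general manipulation. The derivation of \eqref{16} itself is routine once the observation ``immigration changes $S_j$ but not $D$'' is in place, so essentially all the work is in the rewriting \eqref{16}$\to$\eqref{17}.
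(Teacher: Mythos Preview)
Your proposal is correct and follows essentially the same approach as the paper: derive the one-step recursion $\IE(D_{k,n}\mid T,X_{n-1})=D_{k,n-1}\cdot\frac{c+k+N_{n-1}+n-1}{c+N_{n-1}+n-1}$ from the urn dynamics (immigration affects the total but not the white count), iterate to get \eqref{16}, and then split the product $\prod_{j=0}^{n-1}$ into blocks on which $N_j$ is constant, telescope each block into a Gamma ratio, and reassemble to obtain \eqref{17}. Two small slips to watch when you write it up: the intermediate algebra ``$1+kW_j/(S_jW_j)\cdot W_j$'' is garbled (the clean computation is $\frac{W_j}{S_j}\cdot\frac{W_j+k}{W_j}+\frac{S_j-W_j}{S_j}=\frac{S_j+k}{S_j}$), and your ``boundary factor'' $\frac{\beta+\ell-1+T_\ell+k}{\beta+\ell-1+T_\ell}$ is the reciprocal of what actually appears in \eqref{17}; the telescoping will sort this out once you track numerators and denominators carefully.
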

\begin{proof} Let~$X_n \sim\urnlaw(n,\pi,b,w)$. To shorten the formulas, let~$c = b+w$.
Since the total number of balls in the urn after draw~$n-1$ is~$c+N_{n-1}+n-1$, we have 
\be{
\IP\bkle{X_n=X_{n-1}+1\,\bmid\, T,X_{n-1}}= \frac{X_{n-1}}{c+N_{n-1}+n-1},
}
and we easily find
\be{
\IE(D_{k,n}\,|\, T,X_{n-1})= D_{k,n-1} \frac{c+k+N_{n-1}+n-1}{c+N_{n-1}+n-1}.
}
Iterating yields 
\be{
\IE(D_{k,n}| T)= \IE(D_{k,0}|T)\prod_{j=0}^{n-1}\frac{c+k+j+N_j}{c+j+N_j},
}
which is easily seen to be~\eq{16}. 
Now, set~$T_0=0$ and note that for~$i\geq 1$, if~$T_{i-1}<T_i$, then~$N_{T_{i-1}}=\cdots=N_{T_{i}-1}=i-1$,  so we can rewrite this last expression as 
\bes{
\frac{\IE( D_{k,n}| T)}{\IE(D_{k,0}|T)} 
&=\left(\prod_{i=1}^{N_{n-1}}\prod_{j=T_{i-1}}^{T_{i}-1}\frac{c+k+j+i-1}{c+j+i-1} \right)\prod_{j=T_{N_{n-1}}}^{n-1}\frac{c+k+j+N_{n-1}}{c+j+N_{n-1}} \\
&=\left(\prod_{i=1}^{N_{n-1}}\frac{\Gamma(c+k+i-1+T_i)\Gamma(c+i-1+T_{i-1})}{\Gamma(c+k+i-1+T_{i-1})\Gamma(c+i-1+T_i)} \right )\\
&\kern13em\times \frac{\Gamma(c+k+N_{n-1}+n)\Gamma(c+N_{n-1}+T_{N_{n-1}})}{\Gamma(c+k+N_{n-1}+T_{N_{n-1}})\Gamma(c+N_{n-1}+n)} \\
&=\left(\prod_{i=1}^{N_{n-1}}\frac{\Gamma(c+k+i-1+T_i)}{\Gamma(c+i-1+T_i)}\right)
\left(\prod_{i=0}^{N_{n-1}-1}\frac{\Gamma(c+i+T_{i})}{\Gamma(c+k+i+T_{i})} \right )\\
&\kern13em\times \frac{\Gamma(c+k+N_{n-1}+n)\Gamma(c+N_{n-1}+T_{N_{n-1}})}{\Gamma(c+k+N_{n-1}+T_{N_{n-1}})\Gamma(c+N_{n-1}+n)} \\
&=\left(\prod_{i=1}^{N_{n-1}}\frac{\Gamma(c+k+i-1+T_i)\Gamma(c+i+T_{i})}{\Gamma(c+k+i+T_{i})\Gamma(c+i-1+T_i)} \right )\\
&\kern13em\times \frac{\Gamma(c+k+N_{n-1}+n)\Gamma(c)}{\Gamma(c+k)\Gamma(c+N_{n-1}+n)},
}
which, using that~$x\Gamma(x)=\Gamma(x+1)$, easily simplifies to~\eq{17}.
\end{proof}

We use Lemma~\ref{lem2} to establish the almost sure behavior of~$\IE(D_{k,n}| T)$.

\begin{lemma}\label{lem3} Let~$\pi$ be a probability distribution on the non-negative integers with mean~$0<\mu\leq \infty$, and let~$\tau,\tau_1,\tau_2,\dots$ be a sequence of independent and identically distributed random variables with  distribution~$\pi$.
For~$i\geq 1$, let~$T_i=\sum_{j=1}^i \tau_i$, and for~$n\geq 0$, let~$N_n=\#\{i\geq 1: T_i \leq n\}$.
If either
\begin{enumerate}
\item[$(a)$] there is~$\eps>0$ such that~$\IE \tau^{1+\eps} <\infty$, or
\item[$(b)$] there is~$\eps>0$ such that~$\IP[\tau > n] \geq Cn^{-(1-\eps)}$ for~$n$ large enough,
\end{enumerate}
then, for any~$\alpha>0$ and~$\beta>0$,
there exists a (possibly random) positive number~$\chi(\alpha,\beta,\pi)$ such that, almost surely, 
\ben{\label{18}
n^{-\frac{\alpha\mu}{1+\mu}}\prod_{i=1}^{n}\bbklr{1+ \frac{ \alpha}{\beta+i+N_{i}}}\to \chi(\alpha,\beta,\pi),
\qquad\text{as~$n\toinf$.}
}
\end{lemma}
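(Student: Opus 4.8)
The plan is to take logarithms and analyze the partial sums $S_n := \sum_{i=1}^n \log\bigl(1 + \tfrac{\alpha}{\beta+i+N_i}\bigr)$, showing that $S_n - \tfrac{\alpha\mu}{1+\mu}\log n$ converges almost surely. Since $\log(1+t) = t + O(t^2)$ and $\beta + i + N_i \to \infty$, the quadratic remainders $\sum_i \bigl(\tfrac{\alpha}{\beta+i+N_i}\bigr)^2$ form a convergent series almost surely (the summands are $O(i^{-2})$), so this term contributes only to the random limiting constant $\chi$ and it suffices to control $\sum_{i=1}^n \tfrac{\alpha}{\beta+i+N_i}$. The key probabilistic input is the almost sure asymptotics of $N_n$, the renewal counting function: under assumption~(a), the strong law gives $N_n/n \to 1/\mu$ a.s.\ when $\mu<\infty$, while under assumption~(b) (and when $\mu=\infty$), one has $N_n = o(n)$ a.s., so in all cases $N_n/n \to 1/\mu$ a.s.\ (interpreting $1/\infty = 0$). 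Hence $\beta + i + N_i \sim i(1 + 1/\mu) = i\tfrac{\mu+1}{\mu}$ a.s., which already identifies the exponent: $\sum_{i=1}^n \tfrac{\alpha}{\beta+i+N_i} = \tfrac{\alpha\mu}{\mu+1}\log n + (\text{a.s.\ convergent remainder})$.

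The main work is to upgrade the ratio asymptotics $\beta+i+N_i \sim i\tfrac{\mu+1}{\mu}$ to the statement that $\sum_{i=1}^n\bigl(\tfrac{\alpha}{\beta+i+N_i} - \tfrac{\alpha\mu}{(\mu+1)i}\bigr)$ converges almost surely. Writing the $i$-th term as $\tfrac{\alpha\mu}{(\mu+1)i}\cdot\tfrac{(\mu+1)i/\mu - (\beta+i+N_i)}{\beta+i+N_i}$, and using $\beta+i+N_i \asymp i$ a.s., this reduces to showing $\sum_i \tfrac{1}{i^2}\bigl|N_i - i/\mu\bigr|$ (plus a negligible $\sum_i \beta/i^2$ term) converges a.s. So the crux is a rate of convergence for the renewal process: one needs $N_i - i/\mu = o(i)$ with enough uniformity that $\sum_i i^{-2}|N_i - i/\mu| < \infty$ a.s. Under~(a) with a $(1+\eps)$-moment, this should follow from a Marcinkiewicz--Zygmund type bound: $T_j - j\mu$ is a sum of i.i.d.\ centered variables with a $(1+\eps)$-moment, giving $T_j - j\mu = o(j^{1/(1+\eps)})\cdot(\text{slowly varying})$ a.s.\ — actually more carefully, $T_j = j\mu + o(j)$ a.s.\ suffices to get $N_i = i/\mu + o(i)$, but to get summability of $i^{-2}|N_i - i/\mu|$ one wants a polynomial rate $|N_i - i/\mu| = O(i^{1-\delta})$ a.s.\ for some $\delta>0$. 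This is where the moment hypothesis is genuinely used: invert $T_{N_i}\leq i < T_{N_i+1}$ together with $|T_j - j\mu| = O(j^{1-\delta'})$ a.s.\ (valid for $\delta'$ depending on $\eps$, by Marcinkiewicz--Zygmund plus Borel--Cantelli along a subsequence and monotonicity interpolation). When $\mu=\infty$ under either hypothesis, the analogous statement is that $i^{-2}N_i$ is summable a.s.; under~(b), the lower bound $\IP[\tau>n]\geq Cn^{-(1-\eps)}$ forces $T_j$ to grow faster than $j^{1/(1-\eps)+\text{something}}$, so $N_i$ is at most a small power of $i$, again giving summability.

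I expect the renewal-rate estimate — obtaining $|N_i - i/\mu| = O(i^{1-\delta})$ a.s.\ (resp.\ $N_i = O(i^{1-\delta})$ a.s.\ when $\mu=\infty$) from the one-sided/moment hypotheses — to be the main obstacle, since everything else (the log expansion, the convergent-series bookkeeping, and reading off the exponent $\alpha\mu/(1+\mu)$) is routine once that rate is in hand. It is worth remarking that the limit $\chi(\alpha,\beta,\pi)$ is genuinely random: it depends on the entire realized sequence $(\tau_i)$ through the convergent remainder series, which is consistent with the statement of the lemma.
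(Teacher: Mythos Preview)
Your proposal is correct and follows essentially the same route as the paper: take logarithms, discard the $O(t^2)$ remainder as an absolutely convergent series, reduce to showing $\sum_i\bigl(\frac{1}{\beta+i+N_i}-\frac{\mu}{(\mu+1)i}\bigr)$ converges a.s., and then establish a polynomial rate $|N_i - i/\mu| = O(i^{1-\delta})$ a.s.\ (resp.\ $N_i = O(i^{1-\delta})$ when $\mu=\infty$) via Marcinkiewicz--Zygmund in case~(a) and a Borel--Cantelli argument on $\IP[T_n\leq n^{1+\eps}]$ in case~(b). The paper's execution of the renewal-rate step under~(a) goes through the explicit inversion $\{N_n/n - 1/\mu \geq n^{-\eps/2}\} = \{T_{E_n^+}\leq n\}$ and then cites Petrov's version of the Marcinkiewicz--Zygmund SLLN, which is exactly the mechanism you sketch.
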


\begin{proof}
\textbf{Case~$\boldsymbol{(a)}$.}  Taking logarithm in \eq{18}, it is enough to show that
\be{
  \sum_{j=1}^n \log\bbklr{1+\frac{ \alpha}{\beta+j+N_{j}}} -	\frac{\alpha\mu}{1+\mu}\log n 
}
converges almost surely to a (possibly random) real number. Since both
\ben{\label{19}
	\log(n) - \sum_{j=1}^n \frac{1}{j}\qquad\text{and}\qquad \sum_{j=1}^n\log(1+x_j)-\sum_{j=1}^n x_j 
}
converge, provided 
\ben{\label{20}
\sum_{j=1}^\infty x_j^2<\infty,
}
it is enough to consider convergence of 
\besn{\label{21}
	& \sum_{j=1}^n\bbbklr{\frac{1}{\beta+j+N_j}-\frac{\mu}{(1+\mu)j}} \\
	& \qquad =   \sum_{j=1}^n\bbbklr{\frac{1}{\beta+j+N_j}-\frac{1}{j+N_j}}+ \sum_{j=1}^n\bbbklr{\frac{1}{j+N_j}-\frac{\mu}{(1+\mu)j}}.
}
Now, to prove \eq{20} for~$x_j = 1/(\beta+j+N_j)$, which justifies the second approximation in~\eq{19} and also convergence of the first sum on the right hand side of \eq{21}, we observe that, almost surely,
\ben{\label{22}
	\sum_{j=1}^\infty\bbklr{\frac{1}{\beta+j+N_j}}^2 \leq \sum_{j=1}^\infty{\frac{1}{j^2}} <\infty.
}
In order to prove convergence of the second sum on the right hand side of \eq{21}, we need a refined estimate for the renewal law of large numbers. 
Assume without loss of generality that~$\eps<1$. Let~$e^+_n = n\bklr{\mu^{-1}+n^{-\eps/2}}$ and~$E^+_{n} = \ceil{e^+_n}$, and observe that
\bes{
	\bbbklg{ \frac{N_n}{n}-\frac{1}{\mu} \geq n^{-\eps/2} } 
	&= \bklg{ N_n \geq e^+_n} \\
	& = \bklg{ T_{E^+_n} \leq n} \\
	& = \bbbklg{\frac{T_{E^+_n} - \mu E^+_n}{E^+_n} \leq \frac{n -  \mu E^+_n}{E^+_n} }.
}
Likewise, with~$e^-_n = n\bklr{\mu^{-1}-n^{-\eps/2}}$, $E^-_n = \floor{e^-_n}+1$, and~$n$ large
enough to ensure~$E^-_n>0$,
\bes{
	\bbbklg{ \frac{N_n}{n}-\frac{1}{\mu} \leq -n^{-\eps/2} } 
	&= \bklg{ N_n \leq e^-_n} \\
	& = \bklg{ T_{E^-_n} > n} \\
	& = \bbbklg{\frac{T_{E^-_n} - \mu E^-_n}{E^-_n} > \frac{n - \mu E^-_n}{E^-_n} }.
}
From this and the fact that~$\abs{n-\mu E^{\pm}_n}/E^{\pm}_n=\Theta( n^{-\eps/2})$, it is not difficult to see that there is a constant~$C>0$ such that
\besn{\label{23}
	\limsup_{n\geq 1} \bbbklg{ \bbabs{\frac{N_n}{n}-\frac{1}{\mu}} \geq n^{-\eps/2} }
	& \subset \limsup_{n\geq 1} \bbbklg{ \bbabs{\frac{T_n}{n}-\mu} \geq Cn^{-\eps/2} } \\
	& = \limsup_{n\geq 1} \bbbklg{ \frac{1}{n^{1-\eps/2}}\bbbabs{\sum_{i=1}^n (\tau_i-\mu)} \geq C }.
}
It follows from \cite[Theorem 17, p.~274, with~$a_n = n^{1-\eps/2}$]{Petrov1975}  and the fact that~$\IE \tau^{1+\eps}<\infty$ that the last event in \eq{23} has probability zero (alternatively use the Marcinkiewicz-Zygmund strong law of large numbers). Therefore, 
\ben{\label{24}
	\limsup_{n\toinf} n^{\eps/2}\bbabs{\frac{N_n}{n}-\frac{1}{\mu}} < \infty
}
 almost surely. Since
\bes{
	\sum_{j=1}^n\bbbabs{\frac{1}{j+N_j}-\frac{\mu}{(1+\mu)j}}
	& = \sum_{j=1}^n\bbbabs{\frac{1}{(1+N_j/j)j}-\frac{1}{(1+1/\mu)j}} \\
	& = \sum_{j=1}^n\frac{\abs{N_j/j-1/\mu}}{(1+N_j/j)(1+1/\mu)j} \\
	& \leq \sum_{j=1}^n\frac{j^{\eps/2}\abs{N_j/j-1/\mu}}{j^{1+\eps/2}}.
}
we conclude that, using~$\eq{24}$ and the fact that~$\sum_{j\geq 1}j^{-(1+\eps/2)}<\infty$, the last sum converges  almost surely as~$n\toinf$. 

\smallskip\noindent\textbf{Case~$\boldsymbol{(b)}$.} 
Following the proof of the~$\mu$ finite case (interpreting~$\infty/(1+\infty)$ as~$1$) up to and including~\eq{22},
it is sufficient to establish, as in~\eq{24}, that
$\limsup_{n\toinf}n^{\eps'}\frac{N_n}{n} < \infty$ almost surely for some~$\eps'>0$. Observe that 
\bes{
	\IP[T_n \leq n^{1+\eps}] & \leq \IP[\max\{\tau_1,\dots,\tau_n\} \leq n^{1+\eps}] 
	 = \bklr{\IP[\tau_1\leq n^{1+\eps}]}^n 
	 = \bklr{1-\IP[\tau_1> n^{1+\eps}]}^n \\
	& \leq \bbklr{1-\frac{C}{n^{(1-\eps)(1+\eps)}}}^n \\
	& = \bbklr{1-\frac{C}{n^{1-\eps^2}}}^n 
	 \leq \exp\bklr{-Cn^{\eps^2}}.
}
By Borel-Cantelli, 
\be{
	\IP\bbbkle{\limsup_{n\toinf}\bbklg{\frac{1}{n^{1+\eps/2}}T_n\leq n^{\eps/2}}}=0,
}
so that~$\frac{1}{n^{1+\eps/2}}T_n\toinf$ almost surely. By the usual relation between~$T_n$ and~$N_n$, this implies that, for~$\eps':=1-\frac{1}{1+\eps/2}>0$,
\be{	
	\limsup_{n\toinf}n^{\eps'}\frac{N_n}{n} < \infty
}
almost surely.
\end{proof}

The next result provides moment bounds for applying  dominated convergence to strengthen the convergence of Lemma~\ref{lem3}.

\begin{lemma}\label{lem4} Under the assumptions of Lemma~\ref{lem3}, if either
\begin{enumerate}
\item[$(a)$]~$\IE \tau^p <\infty$ for some~$p>1$, and~$1\leq k < (\frac{p}{2}-1)(\mu+1)$, or
\item[$(b)$]~$\IE \tau=\infty$ and~$k\geq 1$, 
\end{enumerate}
then, with~$D_{k,n}$ be as in Proposition~\ref{thm1},
\be{
	\limsup_{n\toinf}\frac{\IE D_{k,n}}{n^{k\mu/(1+\mu)}} < \infty.
}
\end{lemma}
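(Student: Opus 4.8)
The plan is to treat the two cases separately; case $(b)$ is essentially immediate, and all the work is in case $(a)$.

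\textbf{Case $(b)$.} Here $\mu=\infty$, so $n^{k\mu/(1+\mu)}=n^k$ by the stated convention, and it suffices to use the crude deterministic bound $X_n\leq w+n$ (a draw-and-replace step adds at most one white ball), which gives $D_{k,n}=\prod_{j=0}^{k-1}(X_n+j)\leq(w+n+k)^k$ and hence $\limsup_n\IE D_{k,n}/n^k\leq 1$. This bound holds for every $\pi$; what is special about case $(a)$ is that the range for $k$ is empty unless $p>2$, and $p>2$ forces $\mu=\IE\tau<\infty$, so the crude bound (of order $n^k$) overshoots and one must exploit the renewal structure.

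\textbf{Case $(a)$, reduction.} Starting from the closed form \eqref{17} of Lemma~\ref{lem2}, using $\Gamma(x+k)/\Gamma(x)\leq(x+k)^k$ and $\tfrac{x}{x+k}=1-\tfrac{k}{x+k}\leq e^{-k/(x+k)}$, one obtains the deterministic estimate
\[
\IE(D_{k,n}\mid T)\ \leq\ C\,(n+N_{n-1})^{k}\,\exp\!\Bigl(-k\sum_{j=1}^{N_{n-1}}\frac1{b+w+j-1+T_j+k}\Bigr),
\]
together with the fallback $\IE(D_{k,n}\mid T)\leq(w+n+k)^k\leq Cn^k$. So it remains to bound the expectation of the right-hand side by $Cn^{k\mu/(1+\mu)}$. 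On a ``typical'' realisation $N_{n-1}\sim n/\mu$, so $(n+N_{n-1})^k$ is of order $n^k$, while $T_j\sim\mu j$, so $\sum_{j=1}^{N_{n-1}}(b+w+j-1+T_j+k)^{-1}\sim\sum_{j\leq n/\mu}\tfrac1{(1+\mu)j}\sim\tfrac1{1+\mu}\log n$, and the product is of order $n^{k-k/(1+\mu)}=n^{k\mu/(1+\mu)}$; the analysis must show that atypical realisations cost nothing extra in expectation.

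\textbf{Case $(a)$, execution and main obstacle.} I would run the quantitative renewal estimates already used in the proof of Lemma~\ref{lem3} (Fuk--Nagaev / Petrov for $T_j$, and Chernoff for the renewal count via $\pi_0<1$). The contribution of the events $\{N_{n-1}>Kn\}$ (for a fixed $K>1/(1-\pi_0)$) and $\{N_{n-1}<n/(2\mu)\}$, where only the fallback $Cn^k$ is available, has probability $\leq e^{-cn}$ and $\leq Cn^{1-p}$ respectively, so, using $k\leq (p-1)(\mu+1)$, these are $O(n^{k\mu/(1+\mu)})$. On the complement one wants $T_j\leq\mu j+\mu j^{1-\delta}$ for a small $\delta=\delta(p)>0$ and all $j$ past the (random) index where the partial sums have ``settled''; then the error $\sum_j \mu j^{1-\delta}/((1+\mu)^2 j^2)$ in $\sum_j(b+w+j-1+T_j+k)^{-1}$ is summable, so the sharp constant $\tfrac1{1+\mu}$ is retained and one gets $O(n^{k\mu/(1+\mu)})$. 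The delicate point — which is the main obstacle — is precisely that $T_j$ is \emph{not} close to $\mu j$ for small $j$ (an event of non-vanishing probability), so one cannot simply work on a single high-probability ``good'' event: instead the estimate must be organised around the sizes of the individual rare large inter-arrivals. The key observation making this work is that when $\tau_i$ takes a large value $t$ the renewal process effectively restarts after it, which enlarges all denominators $b+w+j-1+T_j+k$ for $j\geq i$ and replaces $N_{n-1}$ by roughly $(n-t)/\mu$, so that the whole right-hand side is reduced by a factor of order $(t/i)^{-k/(1+\mu)}$ — the cost of $\tau_i=t$ is only $t^{k/(1+\mu)}$, \emph{not} $t^k$. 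Since $\IP[\tau>t]\leq Ct^{-p}$ and $k/(1+\mu)<\tfrac p2-1<p$, the expected cost of all the large inter-arrivals is a convergent product $\prod_{i\geq1}\bigl(1+C\,\IE[(\tau/i)^{k/(1+\mu)};\,\tau>i]\bigr)\leq\prod_{i\geq1}(1+Ci^{-p})<\infty$ (here $p>1$ is used). Thus it is exactly the inequality $k<(\tfrac p2-1)(\mu+1)$ that keeps all of these estimates summable; an over-generous cushioned bound such as $N_j\geq(1-\eta)j/\mu$ would instead yield only the exponent $\mu/(1+\mu-\eta)$. Assembling the pieces gives $\limsup_n\IE D_{k,n}/n^{k\mu/(1+\mu)}<\infty$.
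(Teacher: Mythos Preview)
Your case $(b)$ and the reduction in case $(a)$ (passing from \eqref{17} to a bound of the form $C(n+N_{n-1})^k\exp(-kS)$ via $\Gamma(x+k)/\Gamma(x)\leq(x+k)^k$) match the paper, as does the split into a ``good'' event where $N_{n-1}$ is of order $n/\mu$ and its complement. The paper uses the single window $A_n=\{|N_{n-1}-(n-1)/\mu|\leq u_n\}$ with $u_n=(n-1)/(\mu+1)-1$ and bounds $\IP[A_n^c]=O(n^{-k/(\mu+1)})$ via the Rosenthal-type inequality of Lemma~\ref{lem6}; your Fuk--Nagaev variant would work too.

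The gap is in your handling of the small-$j$ obstacle on the good event. You actually write down the right device---use $T_j\leq\mu j+j^{1-\delta}$ for $j$ beyond a random ``settling'' index---but then set it aside (``one cannot simply work on a single high-probability good event'') in favour of a heuristic in which each large jump $\tau_i=t$ contributes an independent multiplicative cost of order $(t/i)^{k/(\mu+1)}$, leading to the product $\prod_i(1+Ci^{-p})$. The direction is muddled in your text (a large $\tau_i$ \emph{increases} \eqref{16}, since fewer immigrations mean more white balls, so ``reduced by a factor of $(t/i)^{-k/(1+\mu)}$'' has the wrong sign), but more importantly the factorisation is not justified: a large $\tau_i$ shifts every later $T_j$ additively, so the contributions of different large jumps are not multiplicatively separable in any obvious way, and no argument is given that turns the ``renewal restarts after a big jump'' intuition into a bound on the expectation.

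The paper's resolution is that the settling-index idea already suffices, provided the index is kept random and its \emph{moment} is controlled rather than fixing a threshold. Set
\[
U_\alpha=\sup\{j\geq 1:T_j>\mu j+j^\alpha\},\qquad \tfrac12<\alpha<1,
\]
and split the product $\prod_{j=1}^{\phi_n}\bigl(1-k/(c+j-1+T_j+k)\bigr)$ at $j=U_\alpha$. For $j\geq U_\alpha$ the bound $T_j\leq\mu j+j^\alpha$ holds by construction, and that part of the product is $O(n^{-k/(1+\mu)})$ by exactly your ``summable error'' computation. For $j<U_\alpha$ one reverses the inequality to pick up a factor at most $C\,U_\alpha^{k/(\mu+1)}$. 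It remains to show $\IE\,U_\alpha^{k/(\mu+1)}<\infty$; the tail estimate
\[
\IP[U_\alpha\geq x]\leq\sum_{j\geq x}\IP\bigl[|T_j-j\mu|>j^\alpha\bigr]\leq C\sum_{j\geq x}j^{-p(\alpha-1/2)}
\]
(Lemma~\ref{lem6} plus Markov) gives this once $\alpha<1$ is chosen close enough to $1$ that $p(\alpha-\tfrac12)>k/(\mu+1)+1$, which is possible precisely under the hypothesis $k<(\tfrac p2-1)(\mu+1)$. So the random settling index is the key device, not an obstacle to route around.
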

\begin{proof} \textbf{Case~$\boldsymbol{(a)}$.} Using representation given by~\eq{17}, and the fact that~$\Gamma(x+k)\leq (x+k)^k\Gamma(x)$, we conclude that there is a constant~$C=C(b,w,k)$ such that
\ben{\label{25}
	\IE\klr{D_{k,n}|T} \leq C(c+k+N_{n-1}+n)^k\prod_{j=1}^{N_{n-1}}\bbklr{1-\frac{k}{c+j-1+T_j+k}},
}
where we write~$c=b+w$ to shorten formulas. Let 
~$A_n$ be the event that~$\abs{N_{n-1}-(n-1)/\mu}\leq  u_n :=(n-1)/(\mu+1)-1$. We have
\ben{
\IE D_{k,n} = \IE \klr{D_{k,n} \I[ A_n]}
+\IE \klr{D_{k,n} \I[A_n^c]}, \label{26}
}
and we show that both terms on the right hand side of~\eq{26} are~$\bigo(n^{k\mu/(\mu+1)})$.
Below it is important to notice that~$A_n$ is in the sigma-algebra generated by~$T$.
Now, for the first term of~\eq{26}, use the expression~\eq{25} to find that, under the event~$A_n$,
\bes{
\IE (D_{k,n}|T) \I[A_n]& \leq C (c+k+n/\mu+u_n+n)^k 
 \prod_{j=1}^{\phi_n}\left(1-\frac{k}{c+j-1+T_j+k}\right),
}
where~$\phi_n=\lfloor \frac{n-1}{\mu}-u_n\rfloor$,
and note that~$1\leq\phi_n =\Theta(n)$ by our definition of~$u_n$. 
Since
\be{
(c+k+n/\mu+u_n+n)^k=\bigo(n^k),
}
it is sufficient to show that 
\be{
\IE \prod_{j=1}^{\phi_n}\left(1-\frac{k}{c+j-1+T_j+k}\right)=\bigo\bklr{n^{-k/(1+\mu)}}.
}
Let~$1/2< \alpha< 1$ and set~$U_\alpha:=\sup\{j\geq1: T_j> j\mu+j^{\alpha}\}$ to be 
the last time that the centered random walk~$(T_j-j \mu)_{j\geq0}$ is larger than~$j^\alpha$; note that~$U_\alpha$ is almost surely finite by the law of the iterated logarithm.
Defining the empty product to be one, we have
\besn{
	&\IE \prod_{j=1}^{\phi_n}\left(1-\frac{k}{c+j-1+T_j+k}\right) \\
	&\qquad\leq \IE \prod_{j=U_\alpha}^{\phi_n}\left(1-\frac{k}{c+j-1+T_j+k}\right) 
	\leq  \IE \prod_{j=U_\alpha}^{\phi_n}\left(1-\frac{k}{c+j-1+j \mu + j^{\alpha}+k}\right) \\
	&\qquad\leq \prod_{j=1}^{\phi_n}\left(1-\frac{k}{c+j-1+j \mu + j^{\alpha}+k}\right) 
	\IE \prod_{j=1}^{U_\alpha}\left(1+\frac{k}{j+j \mu + j^{\alpha}}\right).
\label{27}
}
We show the first product of~\eq{27} is~$\bigo(n^{-k/(1+\mu)})$, and the second is bounded.
Taking logarithm in the first product, we claim
\be{
\sum_{j=1}^{\phi_n} \log\left(1-\frac{k}{c+j-1+j\mu+j^\alpha+k}\right) + \frac{k}{1+\mu} \log(n)
}
converges as~$n\to\infty$. 
Since~$\log(\phi_n)-\log(n)$ converges, we can replace~$\log(n)$ with~$\log(\phi_n)$.
Now, similar to the proof of Lemma~\ref{lem3},
since 
\be{
 \sum_{j=1}^n\log(1+x_j)-\sum_{j=1}^n x_j 
}
converges provided~$\sum_{j=1}^\infty x_j^2<\infty$, it is enough to consider the convergence of 
\bes{
 & -\sum_{j=1}^{\phi_n} \frac{1}{c+j-1+j\mu+j^\alpha+k} +\frac{1}{1+\mu}\log(\phi_n)\\
 &\qquad= \sum_{j=1}^{\phi_n} \left(\frac{1}{j(1+\mu)}- \frac{1}{c+j(1+\mu)+j^\alpha+k-1}\right) +\bigo(1) \\
 &\qquad= \sum_{j=1}^{\phi_n} \frac{c+j^\alpha+k-1}{(1+\mu) j(c+j(1+\mu)+j^\alpha+k-1)}+\bigo(1) \\
 &\qquad \leq C \sum_{j=1}^{\phi_n} j^{-2+\alpha} +\bigo(1);
  }
here~$C$ is some constant and the sum is convergent since~$\alpha<1$. For the second product of~\eq{27},
easy variations of the arguments above (or in the proof of Lemma~\ref{lem3}) show that there is a constant~$C$ (depending on~$k$ and~$\mu$) 
such that for any~$t\geq 1$,
\be{
 \prod_{j=1}^{t}\left(1+\frac{k}{j+j \mu + j^{\alpha}}\right)\leq
 \prod_{j=1}^{t}\left(1+\frac{k}{j+j \mu }\right) \leq C t^{k/(\mu+1)}.
}
Substituting~$t=U_\alpha$, it is enough to show that~$\IE U_\alpha^{k/(\mu+1)} <\infty$.
We choose~$\alpha<1$ close enough to one to ensure~$p>\bklr{\frac{k}{\mu+1}+1}/\bklr{\alpha-1/2}$
and find
\besn{ \label{28}
\IP\kle{U_{\alpha}\geq x}&=\IP\bkle{\cup_{j\geq x}\{T_j> j\mu+j^{\alpha}\}} \\
	&\leq \sum_{j\geq x} \IP\bkle{\abs{T_j- j\mu}>j^{\alpha}} \\
	&\leq \sum_{j\geq x}\frac{ \IE \abs{T_j- j\mu}^p}{j^{p \alpha}} \\
	&\leq  C_p \IE \abs{\tau-\mu}^p\sum_{j\geq x} j^{-p(\alpha-1/2)}, 
}
where the last inequality follows from Lemma~\ref{lem6} below. Using \eq{28}, we obtain
\bes{
\IE U_\alpha^{k/(\mu+1)}& =\frac{k}{\mu+1}\int_{0}^\infty  x^{k/(\mu+1)-1}\IP\kle{U_\alpha> x} dx \\	
		&=\frac{k}{\mu+1}\int_{0}^\infty  x^{k/(\mu+1)-1}\IP\kle{U_\alpha\geq\floor{x}+1} dx \\	
		&\leq C_p \frac{ k\IE \abs{\tau-\mu}^p}{\mu+1} \int_{0}^\infty  x^{k/(\mu+1)-1} \sum_{k \geq \floor{x}+1} k^{-p(\alpha-1/2)} dx \\
		&\leq   \frac{ k C_p \IE \abs{\tau-\mu}^p}{(\mu+1)(p(\alpha-1/2)-1)} \int_{0}^\infty  x^{k/(\mu+1)-1} \floor{x}^{-p(\alpha-1/2)+1}dx \\
		&\leq \frac{k C_p \IE \abs{\tau-\mu}^p}{(\mu+1)(p(\alpha-1/2)-1)} \int_{0}^\infty  x^{k/(\mu+1)-p(\alpha-1/2)}dx<\infty;
	}
the finiteness is by the assumption that~$p>\bklr{\frac{k}{\mu+1}+1}/\bklr{\alpha-1/2}$.

For the second term of~\eq{26}, first note that
the term~\eq{16} is decreasing in the~$N_i$, so the conditional expectation (without the indicator) is almost surely bounded
\ben{\label{29}
\IE(D_{k,n}|T)\leq \frac{\Gamma(w+k)}{\Gamma(w)} \prod_{j=0}^{n-1}\frac{c+k+j}{c+j} 
	\leq \frac{\Gamma(w+k)}{\Gamma(w)}\cdot\frac{\Gamma(c)}{\Gamma(c+k)}\cdot\frac{\Gamma(c+k+n)}{\Gamma(c+n)} =\bigo(n^k).
}
Now noting that~$A_n^c$ is in the sigma-algebra generated by~$T$, it is enough to show that~$\IP[A_n^c]=\bigo\bklr{n^{-k/(\mu+1)}}$.
Denoting~$\omega_n:=\lceil(n-1)/\mu+u_n \rceil$ and using the moment bound
of Lemma~\ref{lem6} below, we have 
\bes{
\IP[A_n^c] &= \IP\bkle{\abs{N_{n-1}-(n-1)/\mu} >u_n}\\
	&\leq \IP[T_{\omega_n}< n-1]+ \IP[T_{\phi_n}\geq n-1] \\
	&\leq \IP[T_{\omega_n}-\mu \omega_n< n-1 -\mu\omega_n]+ \IP[T_{\phi_n}-\mu \phi_n \geq n-1-\mu\phi_n] \\[0.3\baselineskip]
	&\leq C_{2k/(\mu+1)} \IE \abs{\tau-\mu}^{2k/(\mu+1)} 
	\left(\frac{\omega_n^{k/(\mu+1)}}{(\mu\omega_n-n-1)^{2k/(\mu+1)}}+\frac{\phi_n^{k/(\mu+1)}}{(n-1-\mu\phi_n)^{2k/(\mu+1)}}\right). 
}
But since~$\phi_n=(n-1)/\mu-u_n-\eps_1$ and~$\omega_n=(n-1)/\mu+u_n+\eps_2$ for some~$\eps_1,\eps_2\in[0,1)$, the last expression
is~$\bigo(n^{-k/(\mu+1)})$, as desired.

\medskip
\noindent\textbf{Case~$\boldsymbol{(b)}$.} Since~$X_n\leq n+w$, then~$D_{k,n} \leq (n+w+k)^k$, and so~$\IE D_{k,n}=\bigo(n^k)$,
as required.
\end{proof}

We now give the proof of Proposition~\ref{thm1} and then note that Theorem~\ref{THM1}
easily follows from that result.

\begin{proof}[Proof of Proposition~\ref{thm1}]
\textbf{Case~$\boldsymbol{(a)}$.} 
Lemma~\ref{lem3}(a) applied to \eq{16} of Lemma~\ref{lem2} implies that 
$n^{-k\mu/(\mu+1)} \IE [D_{k,n}| T]$ converges almost surely to a positive random variable~$\chi(k,b+w,\pi)$. Using Lemma~\ref{lem4} and dominated convergence (see, for example, \cite[Exercise~3.2.5]{Durrett2010}), it follows that 
$n^{-k\mu/(\mu+1)} \IE D_{k,n} \to \IE\chi(k,b+w,\pi)$.

\smallskip\noindent\textbf{Case~$\boldsymbol{(b)}$.} Analogous to Case~$(a)$.
\end{proof}

\begin{proof}[Proof of Theorem~\ref{THM1}] In both cases~$(a)$ and~$(b)$, Lemma~\ref{lem3} implies that~$X_n$ converges to infinity almost surely. Hence, regular and factorial moments are asymptotically equivalent, and Theorem~\ref{THM1} follows directly from Proposition~\ref{thm1} with the same constants~$m_k(b,w,\pi)$.
\end{proof}

The following lemma is given in \cite[16, Page 60]{Petrov1975}
where it is attributed to \cite{Dharmadhikari1969}.

\begin{lemma}\label{lem6}
Let~$Y_1,\ldots, Y_n$ be independent random variables such that
for~$i=1,\ldots,n$,
$\IE Y_i=0$  and~$\IE \abs{Y_1}^p<\infty$, and let~$ S_n=\sum_{i=1}^n Y_i$. Then
\be{
\IE \abs{ S_n}^p \leq C_p n^{p/2-1} \sum_{i=1}^n \IE \abs{Y_i}^p,
}
where
\be{
C_p=\frac{1}{2} p(p-1) \max (1, 2^{p-3}) \left( 1+ \frac{2}{p} K_{2m}^{(p-2)/2m}\right),
}
and the integer~$m$ satisfies~$2m \leq p < 2m +2$, and 
\be{
K_{2m}=\sum_{r=1}^m \frac{r^{2m-1}}{(r-1)!}.
}
\end{lemma}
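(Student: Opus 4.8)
\emph{Plan.} The expression for $C_p$ is meaningful only when $m\ge1$, so I take $p\ge2$ throughout (equivalently $m=\lfloor p/2\rfloor\ge1$, so that $2m\le p<2m+2$ and $0\le p-2\le2m$); this is the range used in the applications above. The argument --- essentially that of \cite{Dharmadhikari1969} --- combines three ingredients: a combinatorial bound for the even integer moment $\IE S_n^{2m}$; a second-order Taylor estimate for $x\mapsto\abs{x}^p$; and a peeling (martingale-difference) argument gluing the two together. For the first, expand $\IE S_n^{2m}=\sum_{i_1,\dots,i_{2m}}\IE\prod_{j=1}^{2m}Y_{i_j}$ and group the summands by the partition of $\{1,\dots,2m\}$ that records which positions carry equal indices. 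Any term whose partition has a singleton block vanishes, since that $Y_i$ is centred and independent of the rest; hence only partitions with all blocks of size $\ge2$ --- and so at most $m$ blocks --- survive. For such a partition with block sizes $s_1,\dots,s_r$ ($\sum_t s_t=2m$, $r\le m$), its contribution is at most $\sum_{j_1,\dots,j_r}\prod_{t=1}^r\IE\abs{Y_{j_t}}^{s_t}=\prod_{t=1}^r\sum_{j=1}^n\IE\abs{Y_j}^{s_t}\le\prod_{t=1}^r n^{1-s_t/2m}\bklr{\textstyle\sum_{j}\IE Y_j^{2m}}^{s_t/2m}=n^{r-1}\sum_{j}\IE Y_j^{2m}$, using $\IE\abs{Y_j}^{s_t}\le(\IE Y_j^{2m})^{s_t/2m}$ and Hölder. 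Since there are $S(2m,r)$ partitions into exactly $r$ blocks and $r!\,S(2m,r)\le r^{2m}$, one has $S(2m,r)\le r^{2m-1}/(r-1)!$; summing over $r\le m$ and bounding $n^{r-1}\le n^{m-1}$ gives
\ben{\label{prop-evenmom}
\IE S_n^{2m}\le K_{2m}\,n^{m-1}\sum_{j=1}^n\IE Y_j^{2m}.
}

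For the second and third ingredients, note that for $p\ge2$ the function $f(x)=\abs{x}^p$ is $C^1$ with $f''(x)=p(p-1)\abs{x}^{p-2}$, and $t\mapsto\abs{a+tb}$ is convex on $[0,1]$; hence Taylor's formula with integral remainder yields, for all real $a,b$,
\be{
\abs{a+b}^p\le\abs{a}^p+p\,\sgn(a)\abs{a}^{p-1}b+\tfrac12p(p-1)\bklr{\abs{a}\vee\abs{a+b}}^{p-2}b^2,
}
and then $\bklr{\abs{a}\vee\abs{a+b}}^{p-2}\le(\abs{a}+\abs{b})^{p-2}\le\max(1,2^{p-3})(\abs{a}^{p-2}+\abs{b}^{p-2})$. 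Taking $a=S_{k-1}$, $b=Y_k$ and expectations kills the linear term (as $Y_k$ is centred and independent of $S_{k-1}$) and leaves, using $\IE[\abs{Y_k}^{p-2}Y_k^2]=\IE\abs{Y_k}^p$, the bound $\IE\abs{S_k}^p-\IE\abs{S_{k-1}}^p\le\tfrac12p(p-1)\max(1,2^{p-3})\bklr{\IE\abs{S_{k-1}}^{p-2}\IE Y_k^2+\IE\abs{Y_k}^p}$. Summing over $k=1,\dots,n$ with $S_0=0$,
\ben{\label{prop-summed}
\IE\abs{S_n}^p\le\tfrac12p(p-1)\max(1,2^{p-3})\bbbklr{\sum_{k=1}^n\IE\abs{S_{k-1}}^{p-2}\,\IE Y_k^2+\sum_{k=1}^n\IE\abs{Y_k}^p}.
}
In \eq{prop-summed}, $\sum_k\IE\abs{Y_k}^p\le n^{p/2-1}\sum_k\IE\abs{Y_k}^p$ as $p\ge2$; for the other sum, Jensen gives $\IE\abs{S_{k-1}}^{p-2}\le(\IE S_{k-1}^{2m})^{(p-2)/2m}$ (valid since $p-2\le2m$), into which I feed \eq{prop-evenmom} and then replace the $2m$-th and second moments of the $Y_i$ by $p$-th moments via $\IE Y_i^{2m}\le(\IE\abs{Y_i}^p)^{2m/p}$, $\IE Y_k^2\le(\IE\abs{Y_k}^p)^{2/p}$ and Hölder on the resulting sums over the summand index. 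A bookkeeping of exponents shows the powers of $n$ collapse exactly to $n^{p/2-1}$, giving $\sum_k\IE\abs{S_{k-1}}^{p-2}\IE Y_k^2\le K_{2m}^{(p-2)/2m}n^{p/2-1}\sum_i\IE\abs{Y_i}^p$; substituting into \eq{prop-summed} produces the stated bound (a slightly sharper treatment of the remainder above yields the factor $2/p$ in front of $K_{2m}^{(p-2)/2m}$ in $C_p$).

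\emph{Main obstacle.} The delicate point is the explicit constant in \eq{prop-evenmom}: one must carry out the set-partition bookkeeping and the Stirling estimate $S(2m,r)\le r^{2m-1}/(r-1)!$ carefully enough to recover $K_{2m}=\sum_{r=1}^m r^{2m-1}/(r-1)!$. Once \eq{prop-evenmom} is in hand, the peeling argument is routine, the only fuss being the constant arithmetic needed to land on exactly $C_p$. Finally, I note that the lemma is used in this paper only through the order of magnitude $\IE\abs{S_n}^p=\bigo\bklr{n^{p/2-1}\sum_i\IE\abs{Y_i}^p}$, so the precise value of $C_p$ is inessential: one could instead cite the upper Marcinkiewicz--Zygmund inequality and combine it with Minkowski's inequality in $L^{p/2}$ and Hölder.
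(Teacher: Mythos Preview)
The paper does not prove this lemma at all: it is simply quoted from \cite[p.~60]{Petrov1975}, who attributes it to \cite{Dharmadhikari1969}. So there is no ``paper's own proof'' to compare against. Your proposal is in fact a faithful reconstruction of the original Dharmadhikari--Jogdeo argument: the even-moment estimate via set partitions (yielding the constant $K_{2m}$), the second-order Taylor bound for $|x|^p$, and the telescoping over $k$. The exponent bookkeeping you sketch is correct and does collapse to $n^{p/2-1}$. The only point where your write-up falls short of the stated constant is, as you note, the factor $2/p$ in front of $K_{2m}^{(p-2)/2m}$; this comes from handling the Taylor remainder slightly more carefully (Dharmadhikari--Jogdeo integrate the remainder rather than bound it by the supremum over $t\in[0,1]$), but your acknowledged looser constant $1+K_{2m}^{(p-2)/2m}$ is harmless for the applications in this paper, which only use the order of magnitude.
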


\section{Proof of Theorem~\ref{THM2}}\label{sec4}
 
We prove the convergence first for~$b=1$,
and then the general case follows easily from an auxiliary 
P\'olya urn argument. 

Recall that~$W_n=n^{-\mu/(\mu+1)}X_n$ and we want to
derive the distributional limit of the sequence~$W_n$.
The method of proof is to show tightness of the sequence~$\law(W_n)$,
and then use the characterizing properties of the limit given by Proposition~\ref{prop1}
to prove convergence. To simplify notation, for a probability distribution~$\pi=(\pi_k)_{k\geq 0}$, let 
~$\pi^* = (\pi^*_k)_{k\geq 1}$ be the distribution defined as~$\pi^*_k = \pi_{k-1}$ for~$k\geq 1$. Moreover, let~$\supp(\pi^*) = \{k\geq 1:\pi^*_k>0\}$ be the support of~$\pi^*$.
 
\begin{lemma}\label{lem7}
Let~$(W_n)_{n\geq0}$ be a sequence of non-negative random variables. Let~$w>0$, let~$\pi =(\pi_k)_{k\geq 0}$ be a probability distribution, and let~$(m_k)_{k\in\supp(\pi^*)}$ be positive numbers. If
\begin{itemize}
\item[(i)] for each~$k\in\supp(\pi^*)$ there is~$\eps>0$ such that\/~$\limsup_{n\to\infty} \IE W_n^{k+\eps}<\infty$,
\item[(ii)]~$\lim_{n\toinf}\IE W_n^k = m_k$ for all~$k\in\supp(\pi^*)$, and
\item[(iii)] for each~$n$ there is a coupling~$\bklr{W_n, B_n W_n\s{\pi^*}}$, where~$W_n\s{\pi^*}$
has the~$\pi^*$-power-bias distribution of~$W_n$ defined
through~\eq{5}, where~$B_n\sim\Beta(w,1)$ is
independent of~$W_n\s{\pi^*}$, and such that as~$n\to\infty$,
\be{
\law\bklr{W_n - B_{n} W_n\s{\pi^*}}\to0,
}
\end{itemize}
then~$\law(W_n)\to \ul\bklr{w;(a_k)_{k\geq 1}}$ as~$n\to\infty$, where~$a_k = \pi_k^*/m_k$ for~$k\in\supp(\pi^*)$ and~$a_k=0$ for~$k\not\in\supp(\pi^*)$.
\end{lemma}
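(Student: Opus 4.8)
The plan is to combine tightness (from the moment bound (i)) with a characterisation of subsequential limits via Proposition~\ref{prop1}(ii). First I would observe that condition (i) gives uniform integrability of $W_n^k$ for each $k\in\supp(\pi^*)$, and in particular tightness of $\law(W_n)$; hence along any subsequence there is a further subsequence along which $W_n\Rightarrow W$ for some non-negative random variable $W$. By condition (ii) together with the uniform integrability, $\IE W^k = m_k$ for all $k\in\supp(\pi^*)$; and since at least one $k$ has $\pi^*_k>0$, $W$ is not identically zero (indeed $m_k>0$), so $W$ is a positive random variable (one should check $\IP[W=0]=0$ using that $\IE W^k=m_k>0$ and $m_k<\infty$, which rules out an atom at $0$ only if combined with more; more carefully, positivity of $W$ as a random variable should be read as "supported on $(0,\infty)$", and one argues that a limit satisfying the fixed point equation below cannot have an atom at $0$).

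The key step is to pass the fixed-point relation to the limit. Along the chosen subsequence, $W_n\Rightarrow W$, and I claim $B_nW_n\s{\pi^*}\Rightarrow B\,W\s{\pi^*}$, where $B\sim\Beta(w,1)$ is independent of $W\s{\pi^*}$ and $W\s{\pi^*}$ has the $\pi^*$-power-bias distribution of $W$. To see this, note that for bounded continuous $f$,
\be{
\IE f\bklr{W_n\s{\pi^*}} = \sum_{k\in\supp(\pi^*)}\pi^*_k\,\frac{\IE\bklr{W_n^k f(W_n)}}{\IE W_n^k},
}
and each term converges: the denominator converges to $m_k$ by (ii), and the numerator converges to $\IE\bklr{W^k f(W)}$ because $W_n^k f(W_n)\Rightarrow W^k f(W)$ and $\{W_n^k f(W_n)\}$ is uniformly integrable by (i) (as $|W_n^k f(W_n)|\le \norm{f}_\infty W_n^k$). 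Interchanging limit and the sum over $k$ is justified by dominated convergence on $\supp(\pi^*)$ since each summand is bounded by $\pi^*_k\norm{f}_\infty$. Hence $W_n\s{\pi^*}\Rightarrow W\s{\pi^*}$, where the limit indeed has the $\pi^*$-power-bias law of $W$ (its moments being finite by the $\IE W^k=m_k<\infty$ bound). Since $B_n$ is independent of $W_n\s{\pi^*}$ with a fixed $\Beta(w,1)$ law, $B_nW_n\s{\pi^*}\Rightarrow B\,W\s{\pi^*}$. Now condition (iii) says $\law\bklr{W_n - B_nW_n\s{\pi^*}}\to 0$, i.e. the difference converges in distribution to $0$; combined with $W_n\Rightarrow W$ and $B_nW_n\s{\pi^*}\Rightarrow B\,W\s{\pi^*}$, a standard argument (the two marginals converge and their difference goes to $0$ in probability, so e.g. via characteristic functions $\IE e^{itW_n}$ and $\IE e^{it B_nW_n\s{\pi^*}}$ have the same limit) yields $\law(W) = \law\bklr{B\,W\s{\pi^*}}$.

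Thus $W$ is a positive random variable satisfying the distributional fixed point equation~\eq{6} with parameter $w$ and probability distribution $\pi^*$ on the positive integers, and with $\IE W^k = m_k <\infty$ for $k\in\supp(\pi^*)$. By Proposition~\ref{prop1}(ii), the power series $A(x)$ built from $a_k = \pi^*_k/\IE W^k = \pi^*_k/m_k$ for $k\in\supp(\pi^*)$ (and $a_k=0$ otherwise) has positive or infinite radius of convergence, and $W\sim\ul\bklr{w;(a_k)_{k\geq1}}$. In particular the subsequential limit is uniquely determined (it does not depend on the subsequence), so the full sequence converges: $\law(W_n)\to\ul\bklr{w;(a_k)_{k\geq1}}$. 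The main obstacle I anticipate is the last deduction $\law(W)=\law(B\,W\s{\pi^*})$ from condition (iii): "convergence in distribution of the difference to $0$" does not in general combine with marginal convergence, so one must use that both $W_n$ and $B_nW_n\s{\pi^*}$ converge individually (established above) and then argue that their limits coincide — most cleanly by comparing characteristic functions and using that $|\IE e^{itW_n} - \IE e^{it B_nW_n\s{\pi^*}}| = |\IE(e^{itW_n}(1 - e^{it(B_nW_n\s{\pi^*}-W_n)}))| \le \IE|1-e^{it(B_nW_n\s{\pi^*}-W_n)}| \to 0$ by bounded convergence, since $B_nW_n\s{\pi^*}-W_n\to 0$ in distribution hence in probability.
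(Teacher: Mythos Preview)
Your proposal is correct and follows essentially the same route as the paper: tightness from the moment bound, passage to a subsequential limit $W$, identification of $\IE W^k=m_k$ via uniform integrability, termwise convergence of the $\pi^*$-power-bias representation, and then Proposition~\ref{prop1}(ii) to pin down the limit as $\ul\bklr{w;(a_k)_{k\geq1}}$. The only cosmetic difference is that the paper invokes Slutsky's theorem directly---writing $B_nW_n\s{\pi^*}=W_n-(W_n-B_nW_n\s{\pi^*})$ and using that convergence in distribution to the constant $0$ is convergence in probability---to conclude $\law(B_nW_n\s{\pi^*})\to\law(W)$, whereas you reach the same conclusion via the characteristic-function estimate; your hedging about positivity of $W$ is unnecessary, since once \eq{6} holds $W$ inherits a density from $V_w$ and the power-bias is well-defined because $\IE W^k=m_k>0$.
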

\begin{proof}
From~$(i)$ we conclude that~$\limsup_{n\to\infty}\IE W_n <\infty$, so that the sequence~$(\law(W_n))_{n\geq1}$ is
tight. Thus, we assume that~$\law(W_n)\to \law(W)$ and show that this implies~$W\sim\ul\bklr{w;(a_k)_{k\geq1}}$. 
As per Proposition~\ref{prop1}, it is enough to show that
\ben{\label{30}
(a)\enskip \IE W^k = m_k, \quad k\in \supp(\pi^*),
\qquad\text{and}\qquad
(b)\enskip\law(W) = \law\bklr{V_w W\s{\pi^*}},
}
where~$V_w\sim\Beta(w,1)$ is independent of~$W\s{\pi^*}$.
Now, $(i)$, $(ii)$ and dominated convergence (see, for example, \cite[Exercise~3.2.5]{Durrett2010}) imply
that~$\IE W^k=m_k$ for~$k\in\supp(\pi^*)$, which is~$(a)$. Using~$(iii)$ and Slutsky's theorem, we conclude that~$\law\bklr{B_n W_n\s{\pi^*}}\to \law(W)$. 
 But we also have
that~$\law\bklr{B_n W_n\s{\pi^*}}\to \law\bklr{V_w W\s{\pi^*}}$. Indeed, first show~$\law(W_n\s{\pi^*})\to \law(W\s {\pi^*})$:
for bounded and continuous~$f$,
\ben{\label{31}
\IE f\bklr{W_n\s{\pi^*}}=\sum_{k\geq 1} \pi^*_{k} \frac{\IE \bklr{W_n^k f(W_n)}}{\IE W_n^k} \leq \norm{f}_{\infty},
}
and by~$(i)$ and dominated convergence, $\IE \bklr{W_n^k f(W_n)}\to \IE\bklr{ W^k f(W)}$. So by bounded convergence
applied to the sum in~\eq{31}, as~$n\to\infty$,
\be{
\IE f\bklr{W_n\s{\pi^*}}=\sum_{k\geq 1} \pi^*_k \frac{\IE \bklr{W_n^k f(W_n)}}{\IE W_n^k}\longrightarrow\sum_{k\geq 1} \pi^*_k \frac{\IE\bklr{ W^k f(W)}}{\IE W^k} = \IE f\bklr{W\s{\pi^*}}.
}
Moreover, it's obvious that~$\law(B_n)\to \law(V_w)$, and, using independence of the relevant pairs of variables,
$\law\bklr{(B_n, W_n\s{\pi^*})}\to \law\bklr{(V_w, W\s{\pi^*})}$.
Now  the continuous mapping theorem implies~$\law\bklr{B_n W_n\s{\pi^*}}\to \law\bklr{V_w W\s{\pi^*}}$,
as desired. Combining these facts, we find
that~$(b)$ also holds, and it follows from Proposition~\ref{prop1} that~$W\sim\ul\bklr{w;(a_k)_{k\geq 1}}$.
\end{proof}

Our strategy to proof Theorem~\ref{THM2} is to apply Lemma~\ref{lem7} to 
\be{
	W_n = \frac{X_n}{n^{\mu /(1+\mu)}}.
}
Assuming that~$\pi$ has all positive moments finite or infinite mean, and then choosing~$m_k$ as in \eq{7}, 
we conclude that~$(i)$ and~$(ii)$ of Lemma~\ref{lem7} are satisfied. Thus, it is sufficient to show~$(iii)$ of Lemma~\ref{lem7}. We develop the coupling 
of~$W_n$ to a variable distributed
as~$V_{w,n} W_n\s{\pi^*}$ over a series of lemmas, working first on~$W_n\s{\pi^*}$.
Denote the rising factorial 
\be{
	x\rf k:= x(x+1)\dots(x+k-1).
}

\begin{definition}
Let~$\psi$   be a probability distribution concentrated on the positive integers, and let~$W$ be a positive random variable such that~$\IE W^{k}<\infty$,
for all~$k$ in~$\supp(\psi)$. 
A random variable~$W\rfb \psi$ is said to have the \emph{$\psi$-rising-factorial-bias
distribution of~$W$} if
\ben{\label{32}
	\IE f\bklr{W\rfb{\psi}} = \sum_{k\in\supp(\psi)} \psi_k  \frac{\IE\bklr{ W\rf k f(W)}}{\IE W\rf {k}}
}
for all~$f$ for which the expectation on the right hand side exists.
If~$\psi_k=1$ for some~$k\geq1$, then we simply write
$W\rfb{k}$ to denote~$W\rfb{\psi}$.
 \end{definition}

The next lemma relates the~$\pi^*$-rising-factorial-bias distribution of~$W_n$ to its~$\pi^*$-power-bias distribution.
\begin{lemma}\label{lem8}
Let~$b$ and~$w$ be positive integers, let~$\pi$ be a distribution on the non-negative integers, let~$\tau\sim\pi$, and assume that either
\begin{itemize}
\item[$(a)$]~$\IE \tau^p< \infty$ for all~$p\geq 1$,
or
\item[$(b)$] there is~$\eps>0$ such that~$\IP[\tau > n]\geq C n^{-(1-\eps)}$ for~$n$ large enough.
\end{itemize}
Let~$X_n\sim\urnlaw(n,\pi,b,w)$, and let~$X_n\s{\pi^*}$, respectively~$X_n\rfb{\pi^*}$, 
have the~$\pi^*$-power-bias, respectively the~$\pi^*$-rising-factorial-bias distribution of~$X_n$. 
Then
\be{
\dtv\bbklr{\law\bklr{X_n\s{\pi^*}}, \law\bklr{X_n\rfb{\pi^*}}}\to 0\qquad\text{as~$n\toinf$.}
}  
\end{lemma}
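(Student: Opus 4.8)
The strategy is to compare the two biasing transformations index-by-index and then average over the immigration distribution $\pi^*$. Recall that both $X_n\s{\pi^*}$ and $X_n\rfb{\pi^*}$ are mixtures: sample $K\sim\pi^*$ and then apply, respectively, the $K$-power-bias or the $K$-rising-factorial-bias of $X_n$. Since total variation distance between mixtures is bounded by the $\pi^*$-average of the total variation distances between the components, it suffices to prove that for each fixed $k\in\supp(\pi^*)$,
\ben{\label{plan1}
\dtv\bbklr{\law\bklr{X_n\s{k}}, \law\bklr{X_n\rfb{k}}}\to 0\qquad\text{as }n\toinf,
}
and then to pass the limit through the sum. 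The interchange of limit and sum is handled differently in the two cases: in case $(a)$, $\pi^*$ has all moments finite and one can simply dominate; in case $(b)$ there is no such luxury, but one can truncate the sum at a level $K_0$, bound the tail $\sum_{k>K_0}\pi^*_k$ uniformly (the tail contribution to each total variation distance is at most $\sum_{k>K_0}\pi^*_k$, independent of $n$), and then send $K_0\toinf$.

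**The single-index comparison.** For \eqref{plan1}, write out the two laws explicitly. For a non-negative integer-valued $X_n$,
\be{
\IP\bkle{X_n\s k = j} = \pi\text{-free: } \frac{j^k\,\IP[X_n=j]}{\IE X_n^k},\qquad
\IP\bkle{X_n\rfb k = j} = \frac{j\rf k\,\IP[X_n=j]}{\IE X_n\rf k},
}
so that
\be{
\dtv\bbklr{\law\bklr{X_n\s k},\law\bklr{X_n\rfb k}}
= \tsfrac12\sum_{j\geq0}\IP[X_n=j]\,\bbabs{\frac{j^k}{\IE X_n^k}-\frac{j\rf k}{\IE X_n\rf k}}.
}
The point is that $j\rf k = j^k(1+\bigo(1/j))$ uniformly for $j\geq1$ (more precisely $j\rf k/j^k\to1$ as $j\toinf$, with the ratio bounded between fixed constants once $j\geq1$), and by Lemma~\ref{lem3} we have $X_n\toinf$ almost surely, hence $\IP[X_n\leq M]\to0$ for every fixed $M$. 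Thus $\IE X_n\rf k/\IE X_n^k\to1$ as well (using the uniform moment bounds of Lemma~\ref{lem4}/Proposition~\ref{thm1}, which give $n^{-k\mu/(\mu+1)}\IE X_n^k\to m_k>0$ and likewise for the rising factorial, since factorial and ordinary moments are asymptotically equivalent). Splitting the sum over $j$ at a slowly growing threshold $M_n\toinf$ with $M_n = \lito(n^{\mu/(\mu+1)})$: the part $j\leq M_n$ contributes at most $\IP[X_n\leq M_n]\to0$; the part $j>M_n$ is controlled because there $\babs{j^k/j\rf k - 1}\leq C/M_n\to0$ uniformly, and the normalising constants also converge to each other, so the summand is $\lito(1)$ uniformly in $j>M_n$ while the probabilities sum to at most $1$. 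This yields \eqref{plan1}.

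**Main obstacle.** The genuinely delicate point is the interchange of the limit with the infinite sum over $k$ in case $(b)$, where only $\IE\tau^{1-\eps}$-type control is available and higher moments of $\pi$ — hence of $\pi^*$ — may be infinite. The saving observation is that each individual total variation distance in \eqref{plan1} is bounded by $1$ regardless of $k$, so the tail $\sum_{k>K_0}\pi^*_k\cdot 1$ is small uniformly in $n$ simply because $\pi^*$ is a probability distribution; one never needs a moment of $\pi^*$, only the single-index convergence \eqref{plan1} for each of the finitely many $k\leq K_0$. Once this is seen, the proof is routine: fix $\delta>0$, choose $K_0$ with $\sum_{k>K_0}\pi^*_k<\delta$, use \eqref{plan1} to make $\sum_{k\leq K_0}\pi^*_k\,\dtv(\cdots)<\delta$ for $n$ large, and conclude. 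A secondary technical nuisance is making the threshold argument in the single-index step fully uniform — one must check that the ratio $\IE X_n\rf k/\IE X_n^k\to1$ at a rate that does not interfere, but this follows cleanly from $n^{-k\mu/(\mu+1)}\IE D_{k,n}\to m_k$ together with $n^{-k\mu/(\mu+1)}\IE X_n^k\to m_k$ (same limit), both supplied by Proposition~\ref{thm1}.
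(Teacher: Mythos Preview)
Your strategy coincides with the paper's: bound the mixture distance by $\sum_k\pi^*_k\,\dtv\bklr{\law(X_n\s k),\law(X_n\rfb k)}$ and pass to the limit by bounded convergence (your tail-truncation argument is precisely this, and it works identically under $(a)$ and~$(b)$ since each summand is at most~$\pi^*_k$ --- the separate treatment you sketch for case~$(a)$ is unnecessary). For the single-index step the paper replaces your threshold split by the expansion $j\rf k=\sum_{i=0}^k{k\brack i}j^i$ in unsigned Stirling numbers of the first kind, obtaining directly
\be{
2\dtv\bklr{\law(X_n\s k),\law(X_n\rfb k)}\leq \frac{\IE X_n^k}{\IE D_{k,n}}\bbbabs{1-\frac{\IE D_{k,n}}{\IE X_n^k}}+\frac{1}{\IE D_{k,n}}\sum_{i=0}^{k-1}{k\brack i}\IE X_n^i,
}
and both terms vanish by the order estimates $\IE D_{i,n}=\Theta\bklr{n^{i\mu/(\mu+1)}}$ from Proposition~\ref{thm1}. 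Your threshold argument also works, but one step is imprecise: you assert that the part $j\leq M_n$ contributes at most $\IP[X_n\leq M_n]\to0$, yet $\IP[X_n\leq M_n]\to0$ is not established for a growing threshold $M_n$ (Lemma~\ref{lem3} only gives $X_n\toinf$ almost surely, hence $\IP[X_n\leq M]\to0$ for each \emph{fixed}~$M$). The clean fix is to note instead that for $j\leq M_n$ with $M_n=\lito\bklr{n^{\mu/(\mu+1)}}$ the weight already satisfies
\be{
\bbbabs{\frac{j^k}{\IE X_n^k}-\frac{j\rf k}{\IE X_n\rf k}}\leq \frac{M_n^k}{\IE X_n^k}+\frac{(M_n+k-1)^k}{\IE X_n\rf k}=\lito(1)
}
uniformly in such~$j$, so the low part is~$\lito(1)$ without any appeal to~$\IP[X_n\leq M_n]$.
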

\begin{proof}
We show that for each fixed~$k\geq 1$,
\ben{\label{33}
\dtv\bbklr{\law\bklr{X_n\s{k}}, \law\bklr{X_n\rfb{k}}}\to 0,
}
from which the lemma follows by bounded convergence and the fact that in general,
for random variables~$(X,Y,U)$ defined on the same probability space,
\be{
\dtv(\law(X), \law(Y))\leq \IE \dtv\bklr{\law( X|U), \law(Y|U)}.
}
Both~$X_n\s{k}$ and~$X_n\rfb{k}$ have densities with respect to~$X_n$, and so 
\ban{
&2\dtv\bbklr{\law\bklr{X_n\s{k}}, \law\bklr{X_n\rfb{k}}}\notag \\
&\qquad\quad=\sum_{j\geq0} \IP(X_n=j)\bbbabs{\frac{j^{\overline k} }{\IE D_{k,n}}-\frac{j^k}{\IE X_n^k}}\notag \\
&\qquad\quad=\sum_{j\geq0}\frac{\IP(X_n=j)}{\IE D_{k,n}}
	\bbbbabs{j^k\bbbklr{1-\frac{\IE D_{k,n}}{\IE X_n^k}} + \sum_{i=0}^{k-1} { k \brack i} j^i}\notag \\
&\qquad\quad\leq\frac{\IE X_n^k}{\IE D_{k,n}}\bbbabs{1-\frac{\IE D_{k,n}}{\IE X_n^k}}
+ \frac{1}{\IE D_{k,n}}\sum_{i=0}^{k-1} { k \brack i} \IE X_n^i,\label{34}
}
where the~${k \brack i}$ are unsigned Stirling numbers of the first kind. But due
to the moment or tail assumptions on~$\pi$, Proposition~\ref{thm1} implies that 
$\IE D_{i,n} =\Theta(n^{i\mu/(\mu+1)})$ for all~$i=1,\ldots$. Therefore, $\IE X_n^i$ must be of the same order, and moreover,
$\IE D_{k,n}/\IE X_n^k\to1$ as~$n\to\infty$. Applying these facts with~\eq{34} implies the lemma.
\end{proof}

We use the rising factorial bias distribution because it can be connected back to our (unbiased) 
urn models.
\begin{lemma}\label{lem9}
Let~$b$ and~$w$ be positive integers, let~$\pi$ be a distribution on the non-negative integers, let~$\tau\sim\pi$, and assume that either~$(a)$ or~$(b)$ from Proposition~\ref{thm1} holds. Let~$X_n\sim\urnlaw(n,\pi,b,w)$, and let~$\bklr{Y_n(k)}_{k\in\supp(\pi^*),n\geq 0}$ be a family of random variable such that~$Y_n(k) + k\sim\urnlaw(n,\pi,b,w+k)$.
If~$X_n\rfb{\pi^*}$ has the~$\pi^*$-rising-factorial-bias distribution of~$X_n$, 
then
\be{
\dtv\bbklr{\law\bklr{X_n\rfb{\pi^*}},\law\bklr{Y_n(\tau^*)}}\to 0 \qquad \text{as~$n\to\infty$},
}
where~$\tau^*\sim\pi^*$ is independent of~$\bklr{Y_n(k)}_{k\in\supp(\pi^*),n\geq 0 }$.
\end{lemma}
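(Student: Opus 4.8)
The plan is to reduce to a single value of $k$, prove an exact identity conditional on the immigration times, and then remove the conditioning. For the reduction, $\law(X_n\rfb{\pi^*})=\sum_{k\in\supp(\pi^*)}\pi^*_k\law(X_n\rfb k)$ and likewise $\law(Y_n(\tau^*))=\sum_{k\in\supp(\pi^*)}\pi^*_k\law(Y_n(k))$, so convexity of total variation gives
\be{
\dtv\bklr{\law(X_n\rfb{\pi^*}),\law(Y_n(\tau^*))}\leq\sum_{k\in\supp(\pi^*)}\pi^*_k\,\dtv\bklr{\law(X_n\rfb k),\law(Y_n(k))};
}
by the same averaging argument (and dominated convergence) as in the proof of Lemma~\ref{lem8}, it suffices to show that $\dtv(\law(X_n\rfb k),\law(Y_n(k)))\to0$ for each fixed $k\in\supp(\pi^*)$.

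Fix such a $k$, let $T=(T_1,T_2,\dots)$ be the immigration times and $N_n=\#\{i\geq1:T_i\leq n\}$, and realise $Y_n(k)=X_n^{(k)}-k$, where $X_n^{(k)}$ is the number of white balls in the urn $\urnlaw(n,\pi,b,w+k)$ driven by the same $T$. Conditionally on $T$, both $X_n$ and $X_n^{(k)}$ evolve as Pólya urns into which a (given $T$) prescribed number of black balls is inserted between draws, the total number of balls present before draw $j$ being $b+w+N_{j-1}+j-1$ and $b+w+k+N_{j-1}+j-1$ respectively; crucially, along any fixed sequence of white/black draw outcomes the number of black balls present at each step is the same for the two urns, since the black population never depends on the number of white balls. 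Writing the probability of such a sequence as a product over the $n$ draws and summing over all sequences with exactly $\ell$ white outcomes, the common ``black'' factor cancels in the ratio, and one reads off
\be{
\frac{\IP(X_n^{(k)}=w+k+\ell\mid T)}{\IP(X_n=w+\ell\mid T)}
=\frac{(w+k)\rf{\ell}}{w\rf{\ell}}\cdot\frac{\prod_{j=0}^{n-1}(b+w+j+N_j)}{\prod_{j=0}^{n-1}(b+w+k+j+N_j)}
=\frac{(w+\ell)\rf{k}}{\IE(D_{k,n}\mid T)},
}
the last equality combining the elementary identity $(w+k)\rf{\ell}/w\rf{\ell}=(w+\ell)\rf{k}/w\rf{k}$ with formula~\eq{16} of Lemma~\ref{lem2}. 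Hence, with $j=w+\ell$, conditionally on $T$ the variable $Y_n(k)$ has exactly the $k$-rising-factorial-bias distribution of $X_n$ given $T$, i.e.\ $\IP(Y_n(k)=j\mid T)=j\rf{k}\,\IP(X_n=j\mid T)/\IE(D_{k,n}\mid T)$. This is the clean half of the proof and makes precise the slogan that rising-factorial-biasing the urn corresponds to starting with extra white balls.

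It remains to pass from this conditional identity to the unconditional statement. Taking expectations over $T$, the mass functions are $\IP(Y_n(k)=j)=\IE_T[\,j\rf{k}\IP(X_n=j\mid T)/\IE(D_{k,n}\mid T)\,]$ and $\IP(X_n\rfb k=j)=j\rf{k}\IP(X_n=j)/\IE D_{k,n}$; subtracting and using $\IP(X_n=j)=\IE_T[\IP(X_n=j\mid T)]$, $\IE D_{k,n}=\IE_T[\IE(D_{k,n}\mid T)]$ and $\sum_j j\rf{k}\IP(X_n=j\mid T)=\IE(D_{k,n}\mid T)$, one arrives at a bound of the form
\be{
2\dtv\bklr{\law(X_n\rfb k),\law(Y_n(k))}\leq\IE_T\bbabs{\frac{\IE(D_{k,n}\mid T)}{\IE D_{k,n}}-1}.
}
The hard part — and the real content of the lemma — is to show that the right-hand side tends to $0$: Lemma~\ref{lem3} applied to~\eq{16} shows that $n^{-k\mu/(\mu+1)}\IE(D_{k,n}\mid T)$ converges almost surely, Lemma~\ref{lem4} together with Proposition~\ref{thm1} shows that $n^{-k\mu/(\mu+1)}\IE D_{k,n}$ converges to the expectation of that limit, and the moment bounds of Lemma~\ref{lem4} should provide the uniform integrability needed to control the ratio $\IE(D_{k,n}\mid T)/\IE D_{k,n}$ in $L^1$. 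It is precisely at this step that hypotheses $(a)$ and $(b)$ enter, and making this control sharp enough — in both regimes — is where the main difficulty lies; everything else is routine manipulation of mass functions.
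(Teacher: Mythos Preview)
Your approach mirrors the paper's almost exactly: the same reduction to fixed $k$, the same total-variation bound $\IE\babs{1-\IE(D_{k,n}\mid T)/\IE D_{k,n}}$, and the same appeal to Lemmas~\ref{lem3}--\ref{lem4} and Proposition~\ref{thm1} for the final step. The one place you differ is in establishing the conditional identity $\law(Y_n(k)\mid T)=\law(\tilde X_n\mid T)$: you compute the ratio of conditional mass functions directly and recognise $(w+\ell)\rf{k}/\IE(D_{k,n}\mid T)$, whereas the paper instead matches all conditional rising-factorial moments of $Y_n(k)+k$ and $\tilde X_n+k$ via~\eq{16} and then invokes the method of moments; your route is more elementary and arguably cleaner. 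For the uniform-integrability step you only gesture at, the paper's concrete argument is Jensen's inequality $\IE\bklr{\IE(D_{k,n}\mid T)^2}\leq\IE D_{k,n}^2\leq\IE D_{2k,n}$ together with $\sup_n\IE D_{2k,n}/(\IE D_{k,n})^2<\infty$ from Proposition~\ref{thm1}, giving $L^2$-boundedness of the ratio; you should cite Proposition~\ref{thm1} rather than Lemma~\ref{lem4} there, since it is the convergence (not just boundedness) of the normalised $D_{2k,n}$ moments that is needed.
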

\begin{proof} 
As in the start of the proof of Lemma~\ref{lem8},
it is sufficient to show that for each~$k\geq 1$,
\ben{\label{35}
\dtv\bbklr{\law\bklr{X_n\rfb{k}},\law\bklr{Y_n(k)}}\to 0,
}
For the remainder of the proof, we keep~$k\geq 1$ fixed and, thus, drop it from out notation. We define three urn process, coupled together through the immigration times in the following way. 

First, let~$X=(X_0,X_1,X_2,\dots)$ be a realisation of the immigration urn model starting with~$b$~black and~$w$~white balls, and with immigration distribution~$\pi$; let~$T=(T_1, T_2,\ldots)$ be the corresponding sequence of arrival times of immigrating black balls. 
Second, let~$\tilde X = (\tilde X_1,\tilde X_2,\dots)$ be a sequence of random variables such that, given~$T$,
\ben{\label{36}
\IP[\tilde X_n=j| T]= \frac{j^{\overline{k}} \IP[X_n=j| T]}{\IE (D_{k,n}|T) };
}
that is, $\tilde X_n$ has the k-rising-factorial-bias distribution of~$X_n$ conditional on~$T$.
Third, let~$Y_1,Y_2,\dots$ be a realisation of the urn model starting with~$b$ black and~$w+k$ white balls, where the immigration times are also~$T$. We note that, given~$T$, the joint distribution of the three processes is not going to be relevant.

Applying representation \eq{16} from Lemma~\ref{lem2} to~$Y_n+k\sim\urnlaw(n,\pi,b,w+k)$, we obtain that, for any~$l\geq 1$, 
\ba{
&\IE\bbbklg{\,\prod_{j=0}^{l-1} (Y_n+k+j)\,\bbbmid\, T} \\
&\quad = \frac{\Gamma(w+k+l)}{\Gamma(w+k)}
			\prod_{j=0}^{n-1}\frac{b+w+k+l+j+N_j}{b+w+k+j+N_j}\\
&\quad = \frac{\Gamma(w)}{\Gamma(w+k)}
			\prod_{j=0}^{n-1}\frac{b+w+j+N_j}{b+w+k+j+N_j}\times \frac{\Gamma(w+k+l)}{\Gamma(w)}
			\prod_{j=0}^{n-1}\frac{b+w+k+l+j+N_j}{b+w+j+N_j} 
			\\
&\quad = \frac{1}{\IE\bklr{D_{k,n}\bmid T}}
		\times \IE\bbbklg{\,\prod_{j=0}^{k+l-1}(X_n+j)\,\bbbmid\,T}
			\\
&\quad = \frac{1}{\IE\bklr{D_{k,n}\bmid T}}
		\times \IE\bbbklg{\,\prod_{j=0}^{k-1}(X_n+j)\times\,\prod_{j=0}^{l-1}(X_n+k+j)\,\bbbmid\,T}
			\\
&\quad = \IE\bbbklg{\, \prod_{j=0}^{l-1} (\tilde X_n+k+j)\,\bbbmid\, T}.
}
Taking expectations on both sides of the previous display and using the method of moments, we deduce that, in fact, $\law(Y_n)=\law(\tilde X_n)$ for all~$n\geq 0$. Thus, we have reduced the problem to showing that, as~$n\to\infty$,
\ben{\label{38}
\dtv\bklr{\law\bklr{X_n\rfb{k}},\law\bklr{\tilde X_n}}\to 0.
}
Using \eq{32} and \eq{36}, we find
\besn{\label{39}
2\dtv\bklr{\law\bklr{X_n\rfb{k}},\law\bklr{\tilde X_n}}
&=\sum_{j\geq0}\bbbbabs{ \IE  \bbbbklg{\frac{j^{\overline k}\IP[X_n=j| T]}{\IE\klr{D_{k,n}|T}}-\frac{j^{\overline k}\IP[X_n=j| T]}{\IE D_{k,n}}}}  \\
&\leq \IE\bbbbklg{\bbbabs{1-\frac{\IE\klr{D_{k,n}|T}}{\IE D_{k,n}} }\sum_{j\geq0}\frac{j^{\overline k}\IP[X_n=j| T]}{\IE\klr{D_{k,n}|T}}}\\
&=\IE\bbbabs{1-\frac{\IE\klr{D_{k,n}|T}}{\IE D_{k,n}}}.
}
Now, by Lemma~\ref{lem3} and Proposition~\ref{thm1}, we have that, almost surely,
\be{
\frac{\IE\klr{D_{k,n}|T}}{\IE D_{k,n}}\to 1\qquad\text{as~$n\toinf$.}
}
Moreover, Jensen's inequality implies~$\IE(\IE\klr{D_{k,n}|T}^2)\leq \IE D_{k,n}^2\leq\IE D_{2k,n}$, and thus, again by Proposition~\ref{thm1},
\be{
\sup_{n\geq 1}\frac{\IE D_{2k,n}}{\left(\IE D_{k,n}\right)^2}<\infty.
}
Hence, by dominated convergence, the right hand side of \eq{39} tends to zero, which concludes the proof.
\end{proof}

The next two lemmas move us from~$Y_n(\tau^*)$ defined in Lemma~\ref{lem9}
to a variable that will be used as a surrogate for~$W_n\s{\pi^*}$.

\begin{lemma}\label{lem10}
Let~$w$ be a positive integer, let~$\pi$ be a probability distribution on the non-negative integers. Let~$(Y_n(k))_{k\in\supp(\pi^*)}$ be a family of random variables such that~$Y_n(k)+k\sim\urnlaw(n,\pi,1,w+k)$ for~$k\in \supp(\pi^*)$. Moreover, let~$ \tilde X=( \tilde X_0, \tilde X_1,\dots)$ be a realisation of an immigration urn process with immigration distribution~$\pi$, starting with zero black balls and~$w+1$ white balls, so that~$\tilde X_n \sim \urnlaw(n,\pi,0,w+1)$. Let~$\tilde \tau$ be time of the first arrival in the urn process~$\tilde X$, and let~$\tilde Y_n = \tilde X_{n+\tilde \tau}-\tilde \tau-1$.  
Then
\be{
\law\bklr{Y_n(\tau^*)}=\law\bklr{\tilde Y_n},
}
where~$\tau^*\sim\pi^*$ is independent of~$(Y_n(k))_{k\geq 1}$.
\end{lemma}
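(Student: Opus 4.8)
The plan is to run the urn $\tilde X$ up to the moment of its first immigration and then to recognize what remains as an urn of the type $\urnlaw(m,\pi,1,w+1+\tilde\tau)$.

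Let $\tau_1,\tau_2,\dots$ be the i.i.d.\ $\pi$-distributed inter-arrival times driving $\tilde X$, so that its first arrival time is $\tilde\tau=\tau_1$. Two observations are immediate. First, $\tilde\tau\sim\pi$, hence $\tilde\tau+1\sim\pi^*$; and $\tilde\tau$ is a function of $\tau_1$ alone, whereas the evolution of $\tilde X$ from step $\tilde\tau$ onward is governed by $\tau_2,\tau_3,\dots$ together with the internal draw randomness, so $\tilde\tau$ is independent of that part. Second, before the first arrival $\tilde X$ contains no black balls, so every draw up to and including step $\tilde\tau$ is forced to be white and the white count rises by one at each such step; since the immigration at step $\tilde\tau$ adds only black balls, the white count of $\tilde X$ immediately after step $\tilde\tau$ equals $w+1+\tilde\tau$, while the black count equals the multiplicity $G\geq1$ of that first arrival.

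Now I would restart the process at step $\tilde\tau$. Declare the ball arriving with the arrival $T_1=\tilde\tau$ to be the unique ``initial'' black ball of the restarted urn; the remaining arrivals $T_2,T_3,\dots$ of $\tilde X$, shifted to the new origin $\tilde\tau$, occur at the partial sums of $\tau_2,\tau_3,\dots$. Reading off the definition of the immigration urn for this driving sequence, started from $1$ black and $w+1+\tilde\tau$ white balls, one checks that it coincides step by step with $\tilde X$ from step $\tilde\tau$ on; the only point needing attention is that when $\tau_2=0$, equivalently $G\geq2$, the arrivals $T_2,\dots,T_G$ (all equal to $\tilde\tau$) correspond to $G-1$ black balls immigrating ``before the first draw'' of the restarted urn, correctly raising its black count to $1+(G-1)=G$ --- which is precisely the behaviour of a fresh renewal process whose first step is $0$, so nothing leaves the definition of $\urnlaw$. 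Coupling the internal draw randomness, for every $n$ the variable $\tilde X_{n+\tilde\tau}$ equals the number of white balls after $n$ steps of this restarted urn; combined with the independence noted above, it follows that $(\tilde X_{n+\tilde\tau},\tilde\tau)$ has the law of a pair $(W,K)$ with $K\sim\pi$ and, conditionally on $K=t$, $W\sim\urnlaw(n,\pi,1,w+1+t)$, with $K$ independent of the additional randomness entering $W$.

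To conclude, put $\tau^*:=K+1\sim\pi^*$. Since $Y_n(k)+k\sim\urnlaw(n,\pi,1,w+k)$ and $\tau^*$ is independent of the family $(Y_n(k))_k$, the previous step gives $\law(\tilde Y_n)=\law(\tilde X_{n+\tilde\tau}-\tilde\tau-1)=\law(W-(K+1))=\law(Y_n(\tau^*))$, as required. I expect the main obstacle to be the fully rigorous justification of the restart step, namely verifying that peeling off the first arrival's ball as the initial black ball turns the remainder of $\tilde X$ into a genuine $\urnlaw$ urn driven by the shifted sequence $(\tau_2,\tau_3,\dots)$, including the bookkeeping for the possibly several balls that immigrate simultaneously at the restart time; everything else is routine unwinding of the immigration-urn dynamics.
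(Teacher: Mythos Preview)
Your argument is correct and follows exactly the same approach as the paper's proof: observe that the first $\tilde\tau$ draws are forced white, then restart the process at time $\tilde\tau$ with one black and $w+1+\tilde\tau$ white balls, driven by the shifted renewal sequence $(\tau_2,\tau_3,\dots)$. Your version is in fact more careful than the paper's, which simply asserts that after step~$\tilde\tau$ the urn has one black ball and proceeds; you explicitly address the case $G\geq 2$ (simultaneous arrivals at $T_1$) and the independence of $\tilde\tau$ from the restarted process, both of which the paper leaves implicit.
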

\begin{proof}
Consider the urn process~$\tilde X$. Since there are no black balls in the urn initially,
the first~$\tilde \tau$ draws all come up white and so~$\tilde \tau$ white balls are added to the urn,
$\tilde \tau$ steps elapse, and one black ball is added. At this point there are~$w+\tilde \tau+1$
white balls in the urn and~$1$ black ball. Thus we find that 
$\urnlaw(n+\tilde \tau,\pi,0,w+1)=\urnlaw(n,\pi,1,w+\tilde \tau+1)$, which is exactly
the statement of the lemma.
\end{proof}

\begin{lemma}\label{lem11}
Let~$w$ be a positive integer, let~$\pi$ be a probability distribution on the non-negative integers, let~$\tilde X=(\tilde X_0,\tilde X_1,\dots)$, $\tilde\tau$ and~$(\tilde Y_n)_{n\geq 1}$ be defined as in Lemma~\ref{lem10}. Then,
\be{
\frac{1}{n^{\mu/(\mu+1)}}\bklr{\tilde Y_n - (\tilde X_n-w-1)} 
\convprob 0.
}
\end{lemma}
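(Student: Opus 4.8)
The plan is to recognise that $\tilde Y_n$ and $\tilde X_n-w-1$ differ only by the contribution of the $\tilde\tau$ draws lying between step $n$ and step $n+\tilde\tau$, together with the fixed shift $w-\tilde\tau$, and that this ``boundary correction'' is bounded by a single almost surely finite random variable, uniformly in $n$; dividing by $n^{\mu/(\mu+1)}\toinf$ then finishes the proof. First I would unfold the definitions: since $\tilde Y_n=\tilde X_{n+\tilde\tau}-\tilde\tau-1$,
\be{
\tilde Y_n-(\tilde X_n-w-1)=\bklr{\tilde X_{n+\tilde\tau}-\tilde X_n}+(w-\tilde\tau).
}

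Next I would invoke the pathwise dynamics of the immigration urn. In each of the $\tilde\tau$ draws numbered $n+1,\dots,n+\tilde\tau$ exactly one ball is returned whose colour matches the one drawn, whereas every immigration event adds only black balls; hence the white count can grow by at most one per draw, which gives the deterministic inequality $0\leq \tilde X_{n+\tilde\tau}-\tilde X_n\leq \tilde\tau$ on every sample path (this holds for each realised value of $\tilde\tau$ and so does not require $\tilde\tau$ to be independent of the urn). Combining with the previous display,
\be{
\babs{\tilde Y_n-(\tilde X_n-w-1)}\leq \tilde\tau+\babs{w-\tilde\tau}\leq w+2\tilde\tau,
}
a random variable that does not depend on $n$.

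Finally I would conclude as follows. The first immigration time $\tilde\tau$ has distribution $\pi$ and is therefore almost surely finite, while $\mu>0$ because $\pi_0<1$, so $n^{\mu/(\mu+1)}\toinf$; hence $(w+2\tilde\tau)/n^{\mu/(\mu+1)}\to 0$ almost surely, and a fortiori
\be{
\frac{1}{n^{\mu/(\mu+1)}}\bklr{\tilde Y_n-(\tilde X_n-w-1)}\convprob 0,
}
or, explicitly, for every $\delta>0$ one has $\IP\kle{\babs{\tilde Y_n-(\tilde X_n-w-1)}>\delta n^{\mu/(\mu+1)}}\leq \IP\kle{w+2\tilde\tau>\delta n^{\mu/(\mu+1)}}\to 0$. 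There is no genuine obstacle here: the only points needing care are the one-white-ball-per-draw bound, which makes the $\tilde\tau$ extra draws contribute at most $\tilde\tau$ additional white balls irrespective of how immigration interleaves with them, and the almost sure finiteness of $\tilde\tau$, which is precisely what renders the boundary correction negligible against the $n^{\mu/(\mu+1)}$-scale growth of $\tilde X_n$.
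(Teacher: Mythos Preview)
Your argument is correct and essentially identical to the paper's own proof: both unfold $\tilde Y_n=\tilde X_{n+\tilde\tau}-\tilde\tau-1$, use that the white count increases by at most one per draw to bound $\babs{\tilde Y_n-(\tilde X_n-w-1)}\leq 2\tilde\tau+w$, and then divide by $n^{\mu/(\mu+1)}$. The only difference is that you spell out the intermediate inequality $0\leq \tilde X_{n+\tilde\tau}-\tilde X_n\leq\tilde\tau$ explicitly, which the paper leaves implicit.
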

\begin{proof}
The only difference between the two variables is the number of steps the process is run, and the shifts~$\tilde \tau$ and~$w-1$. Since, at each time step, the number of white balls in the urn increase by at most one, we have
\be{
\babs{\tilde Y_n-(\tilde X_n-w-1)}=\babs{\tilde X_{n+\tilde \tau}-\tilde\tau - 1-(\tilde X_n-w-1)}\leq 2\tilde\tau +w.
}
Divided by the scaling~$n^{\mu/(\mu+1)}$, the right hand side tends to zero in probability.
\end{proof}

The previous lemmas imply we can use~$n^{-\mu/(\mu+1)}(\tilde X_n-w-1)$ as a surrogate
for~$W_n\s{\pi^*}$, and the next result shows how to relate this variable back to the original
$W_n$ using a classical P\'olya urn.
\begin{lemma}\label{lem12} Let~$w$ be a positive integer and let~$\pi$ be a probability distribution on the non-negative integers. Let~$\tilde X=(\tilde X_1,\tilde X_2,\dots)$ be as in Lemma~\ref{lem10}, and let~$\bklr{Q_w(n)}_{n\geq0}$ be the number of white balls in a classical P\'olya urn sequence
started with~$1$ black ball and~$w$~white balls. Then
\be{
	Q_w\bklr{\tilde X_n- w-1}\sim\urnlaw(n,\pi,1,w).
}
\end{lemma}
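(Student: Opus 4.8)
The plan is to exhibit a coupling between the immigration urn $\tilde X$ (started with $0$ black, $w+1$ white balls, immigration distribution $\pi$) and a classical P\'olya urn $Q_w$ (started with $1$ black, $w$ white balls, no immigration), in such a way that after $\tilde X$ has run $n$ steps, the number of white balls drawn and added in the classical urn — indexed by its own step count — has advanced exactly $\tilde X_n - w - 1$ times. The key structural observation is that $\tilde X_n - w - 1$ is precisely the number of \emph{white} draws that have occurred in $\tilde X$ up to step $n$ (since every white draw adds one white ball, every black draw or immigration event adds none, and we start with $w+1$ white balls): if $W_n^\circ$ denotes the number of white draws among the first $n$ draws of $\tilde X$, then $\tilde X_n = w + 1 + W_n^\circ$, i.e.\ $W_n^\circ = \tilde X_n - w - 1$.

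First I would set up the coupling. Run $\tilde X$; at each of its draws, if the draw is white, feed a ``white'' signal to the classical urn $Q_w$; if the draw is black (including when the ball drawn is one of the immigrated black balls) or if the event is an immigration addition with no draw, feed nothing to $Q_w$. The point is that the classical urn only advances on white draws of $\tilde X$, so after $n$ steps of $\tilde X$ the classical urn has taken exactly $W_n^\circ = \tilde X_n - w - 1$ steps, and $Q_w(\tilde X_n - w - 1)$ is its white-ball count. The second, more delicate, point is that the conditional probability of the $m$-th white draw in $\tilde X$, given the past, must match the probability that the $m$-th step of the classical urn is white. In $\tilde X$, just before a draw, suppose the urn holds $j$ white balls and $\beta$ black balls; the draw is white with probability $j/(j+\beta)$. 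But $j = w+1 + (\text{number of prior white draws})$ and $\beta = 1 + (\text{number of immigrated black balls so far}) - 1 + \dots$; wait — more carefully, $\beta = (\text{initial }0) + (\text{black draws so far, each adding a black ball}) + (\text{immigrations so far})$. The number of black draws is itself random. So one must condition on the full history. The cleaner route is: condition on everything in $\tilde X$ \emph{except} the colors of the draws, or rather, argue by induction on $n$ that, conditionally on the sequence of draw-outcomes being ``white'' at a prescribed subset of steps, the chance the next white draw produces the pattern matching $Q_w$ is correct. Concretely: at the moment of the $m$-th white draw in $\tilde X$, the white balls number $w + 1 + (m-1) = w + m$ and, among the balls present, the \emph{sub-urn} consisting of the original $w$ white balls plus the very first black ball (the one that, in $\tilde X$'s construction with $0$ initial black balls, arrives as $\tilde\tau$'s immigration — though here we don't need $\tilde\tau$, just that there is always at least one black ball once the process has started, and the ``$+1$'' white accounting) behaves exactly as a classical $(w,1)$ urn \emph{restricted to white-vs-that-black draws}. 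The standard exchangeability/P\'olya-sub-urn argument (a classical P\'olya urn observed on the sub-population of two distinguished colour-classes is again a classical P\'olya urn) gives that the conditional law of which of these $w + m$ distinguished white balls (versus the distinguished black ball) is reinforced matches $Q_w$'s $m$-th step.

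The main obstacle I anticipate is making this ``sub-urn'' argument fully rigorous in the presence of the extra black balls from ordinary black draws and from immigration: those balls inflate the denominator and can themselves be drawn, which does \emph{not} advance $Q_w$, so one must check that conditioning on the event ``the current draw is white'' collapses the dynamics correctly. The cleanest formulation uses the representation of a classical P\'olya urn as sequential sampling from a random Dirichlet (here Beta) proportion, or equivalently the de Finetti/exchangeability representation: the colours of successive draws in $\tilde X$, \emph{restricted to the original $w$ white balls and the single distinguished black ball}, form an exchangeable sequence with the same de Finetti measure $\Beta(w,1)$ as $Q_w$, and the number of such restricted draws completed by step $n$ of $\tilde X$ is exactly $\tilde X_n - w - 1$. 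One then simply matches the two exchangeable sequences via their common directing measure. After the coupling is in place, the conclusion $Q_w(\tilde X_n - w - 1) \sim \urnlaw(n,\pi,1,w)$ is immediate because, by this identification, $Q_w(\tilde X_n-w-1)$ counts exactly the white balls of a $(1,w)$ immigration urn run for $n$ steps with immigration times driven by the same $\pi$.
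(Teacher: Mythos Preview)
Your opening observation that $\tilde X_n - w - 1$ counts the white draws of $\tilde X$ is correct and is the right starting point. However, your sub-urn construction goes astray: you propose tracking ``the original $w$ white balls plus the very first black ball'' (the one arriving by immigration at time~$\tilde\tau$). This sub-urn does not do what you need. First, before time~$\tilde\tau$ there is no black ball at all, so your sub-urn has composition $(w,0)$ rather than $(w,1)$ at the outset; it cannot be a classical $(w,1)$ P\'olya urn from step~$0$. Second, and more importantly, the number of draws landing in your sub-urn by step~$n$ is \emph{not} $\tilde X_n - w - 1$: that quantity counts \emph{all} white draws, including draws of the $(w{+}1)$-th initial white ball and its progeny (which lie outside your sub-urn), and it \emph{excludes} draws of your distinguished black ball and its progeny (which lie inside your sub-urn). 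So even granting the exchangeability you invoke, the clock that indexes $Q_w$ is wrong, and the identification with $\urnlaw(n,\pi,1,w)$ does not go through.

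The fix --- and this is exactly the paper's argument --- is to take the ``black'' ball of $Q_w$ to be the $(w{+}1)$-th \emph{white} ball of $\tilde X$, not an immigrated black ball. Relabel that ball as ``gray'' and run the resulting three-colour urn (white/gray/black) with black immigration at the same arrival times as~$\tilde X$. Then: (i) white${+}$gray evolves exactly as the white count in $\tilde X$, so the number of white-or-gray draws by step~$n$ is $\tilde X_n - w - 1$; (ii) conditional on a draw being white-or-gray, the white/gray outcome is a classical $(w,1)$ P\'olya step, so the white count equals $Q_w(\tilde X_n - w - 1)$; and (iii) recolouring gray as black, the three-colour urn is precisely the $\urnlaw(n,\pi,1,w)$ process, so its white count has that law. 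No de Finetti representation is needed; the three-colour relabelling does all the work in a few lines.
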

\begin{proof}
Start with an urn having~$w$ white balls, $1$ gray ball, and~$0$ black balls.
The urn follows the rules of a classical P\'olya urn with three colors, but at
the arrival times~$T_1, T_2,\ldots,$ driven by~$\pi$, a black ball is added to the urn.
It is clear that~$\tilde X_n-w-1$ equals the number of times a gray or white 
ball is drawn after~$n$ steps in this urn process, and each time a gray or white ball is drawn,
the chance of it being white is proportional to the number of white balls in the urn at that moment, just
as in a classical P\'olya urn. So~$Q_w\bklr{\tilde X_n-w-1}$ is distributed as the number of white balls in the described urn after~$n$ steps, but this distribution is exactly 
$\urnlaw(n,\pi,1,w)$ since the~$1$ gray ball can now be viewed as a ``black" ball. 
\end{proof}

To get to the beta variable~$V_w$ in the coupling (and to transfer to the general~$b>1$ case),
we need the result of \cite[Lemma~2.3]{Pekoz2014a}, which provides a close coupling of a classical 
P\'olya urn to its beta limit. Denote by~$\P(n,b,w)$ the law of the number
of white balls in a classical P\'olya urn started with~$b$ black and~$w$ white balls after~$n$
 draws and replacements.
\begin{lemma}\label{lem13}
Let~$\beta$, $\omega$ and~$k$ be positive integers. There is a coupling~$\bklr{Q_{\beta,\omega}(n),V_{\beta,\omega}}$
with~$Q_{\beta,\omega}(n)\sim \P(n,\beta,\omega)$ 
and~$V_{\beta,\omega}\sim \Beta(\omega,\beta)$, 
such that, almost surely,
\be{
\abs{Q_{\beta,\omega}(n)-n V_{\beta,\omega}} <\beta(4\omega+\beta+1).
}
\end{lemma}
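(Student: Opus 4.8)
The plan is to show that the Pólya–Eggenberger law $\P(n,\beta,\omega)$ is a genuine discretisation of the law of $(n+\omega+\beta)V$, $V\sim\Beta(\omega,\beta)$, and then to use the monotone (quantile) coupling for each fixed $n$. First I would record the exact law. Writing $r=\omega+\beta$ and $M=n+r-1$, a short manipulation of binomial coefficients shows that the Pólya–Eggenberger probability $\IP[Q_{\beta,\omega}(n)=\omega+j]=\binom nj \omega\rf j\beta\rf{n-j}/(\omega+\beta)\rf n$ equals $\binom{\omega+j-1}{\omega-1}\binom{M-\omega-j}{\beta-1}\big/\binom M{r-1}$; that is, $Q_{\beta,\omega}(n)$ has the same law as the $\omega$-th smallest element $P_\omega$ of a uniformly random $(r-1)$-element subset $S$ of $\{1,\dots,M\}$. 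Hence its distribution function is $F(p):=\IP[Q_{\beta,\omega}(n)\le p]=\IP[\,|S\cap\{1,\dots,p\}|\ge\omega\,]=\IP[\Hyp(M,r-1,p)\ge\omega]$, while on the target side the classical incomplete–beta/binomial–tail identity gives, for $V\sim\Beta(\omega,\beta)$, that $G(x):=\IP[MV\le x]=\IP[V\le x/M]=\IP[\Bi(r-1,x/M)\ge\omega]$.

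Next I would set up the coupling: let $U$ be uniform on $(0,1)$, put $Q:=F^{-1}(U)\sim\P(n,\beta,\omega)$, and let $V$ be the $\Beta(\omega,\beta)$-quantile at $U$, so that $MV=G^{-1}(U)$. Since a without-replacement sample of $r-1$ indicators from $\{1,\dots,M\}$ differs from a with-replacement one only on the event that a collision occurs, which has probability at most $\binom{r-1}2/M$, one obtains $|F(p)-G(p)|\le\binom{r-1}2/M$ for every $p$. As $G$ is continuous and strictly increasing, this transfers to the quantile functions: on the bulk of the support the atoms of $F$ and the density of $MV$ are both of order $f_{\Beta(\omega,\beta)}(p/M)/M$, so the $O(1/M)$ gap between the distribution functions becomes an $O(1)$ gap $|Q-MV|$, and then $|Q-nV|\le|Q-MV|+(M-n)V\le|Q-MV|+r-1$. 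Tracking the constants carefully, and treating the two ends of the support separately, yields the explicit bound $\beta(4\omega+\beta+1)$; for the use made of this lemma any fixed constant depending only on $\omega$ and $\beta$ would in fact suffice.

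The delicate point — and the step I expect to cost the most work — is the behaviour near the endpoints $p\approx\omega$ and $p\approx M-\beta+1$, where (for $\omega>1$, resp. $\beta>1$) the beta density degenerates and the crude ``divide the CDF error by the local density'' argument is no longer valid. There one has to argue directly from the explicit shapes of $F$ and $G$, which both behave like $c\,p^{\omega}$ near the left end and like $c'(M-p)^{\beta}$ near the right end, to see that the two quantile functions still stay within a bounded distance of one another; it is precisely this endpoint analysis that produces the asymmetry (linear in $\omega$, quadratic in $\beta$) in the constant. An essentially equivalent but more hands-on route to the same conclusion is to build $S$ directly from the order statistics $U_{(1)}<\dots<U_{(r-1)}$ of $r-1$ i.i.d.\ uniforms, take $V=U_{(\omega)}\sim\Beta(\omega,\beta)$, and obtain $S$ by rounding $MU_{(i)}$ to nearby integers while resolving the (rare and local) ties; once the rounding scheme is checked to return a uniform subset, the estimate $|P_\omega-MU_{(\omega)}|\le r-1$ is immediate.
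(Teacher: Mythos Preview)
The paper does not actually prove this lemma: it is quoted verbatim from \cite[Lemma~2.3]{Pekoz2014a} and used as a black box, so there is no ``paper's own proof'' to compare against here.

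That said, a few remarks on your proposal. Your second route --- realise $V=U_{(\omega)}$ as the $\omega$-th order statistic of $r-1=\omega+\beta-1$ i.i.d.\ uniforms, and build the uniform $(r-1)$-subset of $\{1,\dots,M\}$ by a controlled rounding of $MU_{(1)},\dots,MU_{(r-1)}$ --- is essentially the construction used in the cited reference, and is the cleaner way to get a \emph{deterministic} bound independent of $n$. The key step you gloss over (``once the rounding scheme is checked to return a uniform subset'') is exactly where the work lies, and is also where the specific constant $\beta(4\omega+\beta+1)$ comes from; a naive rounding does not give a uniform subset, so one needs a careful tie-breaking/shifting rule and an explicit bound on how far each coordinate can move.

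Your first route (quantile coupling via the hypergeometric--binomial CDF comparison) is heuristically sound in the bulk, but as you yourself note, the endpoint analysis is genuinely delicate and you have not carried it out. In particular, the ``divide the $O(1/M)$ CDF error by the local density'' step breaks down precisely where the beta density vanishes, and recovering a bound that is uniform in $n$ with the stated explicit constant from this argument would require substantially more than what you have sketched. For the application in this paper any $n$-free constant depending only on $(\omega,\beta)$ would indeed suffice, but if you want the lemma as stated you should pursue the order-statistic construction and pin down the rounding scheme.
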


We are now in position to complete the proof of Theorem~\ref{THM2}.

\begin{proof}[Proof of Theorem~\ref{THM2}]
We first show the result for~$b=1$.
Let~$W_n=n^{-\mu/(\mu+1)} X_n$.
By~\eq{7} and because either~$(a)$ or~$(b)$ is satisfied, there is a sequence~$m=(m_1,m_2,\ldots)$ such that
$\IE W_n^k\to m_k$.
We want to show that~$\law(W_n)\to\ul\bklr{w;(a_k)_{k\geq 1}}$, and we do so by showing~$(i)$, $(ii)$ and~$(iii)$ of Lemma~\ref{lem7}.
By~\eq{7}, $(i),(ii)$ easily follow. To show~$(iii)$, 
Lemmas~\ref{lem8}--\ref{lem11}
imply that we can couple variables~$\bklr{W_n\s{\pi^*}, n^{-\mu/(\mu+1)}\hat X_n}$,
where~$W_n\s{\pi^*}$ has the~$\pi^*$-power-bias distribution of~$W_n$ 
and~$\hat X_n=\tilde X_n-w-1$, such that
\ben{\label{40}
\bklr{W_n\s{\pi^*}-n^{-\mu/(\mu+1)} \hat X_n} \convprob 0\qquad\text{as~$n\to\infty$.}
}
Moreover, Lemma~\ref{lem12} implies
\be{
\hat W_n:=\frac{Q_w\bklr{\hat X_n}}{n^{\mu/(\mu+1)}}\eqlaw W_n,
}
where~$Q_w(n)$ is defined as in Lemma~\ref{lem12}.
By Lemma~\ref{lem13}, there is a coupling 
$(Q_w\bklr{\hat X_n}, V_w \hat X_n)$ with~$V_w\sim\Beta(w,1)$ independent of~$\hat X_n$ and
such that
\be{
\babs{Q_w\bklr{\hat X_n}- V_w \hat X_n}< w+1
} 
almost surely. From these last two displays, we have 
a coupling~$\bklr{V_w W_n\s{\pi^*}, \hat W_n}$ with the appropriate marginals satisfying
\bes{
\babs{V_wW_n\s{\pi^*}-\hat W_n}&\leq \babs{V_wW_n\s{\pi^*}-n^{-\mu/(\mu+1)} V_w\hat X_n}
						+\babs{n^{-\mu/(\mu+1)} V_w\hat X_n-\hat W_n} \\
							&< \babs{W_n\s{\pi^*}-n^{-\mu/(\mu+1)}  \hat X_n}
						+n^{-\mu/(\mu+1)} (w+1),
}
which according to~\eq{40} tends to zero in probability, as desired.
Finally, the convergence of the moments of~$W_n$ to those of its limit follows since~$\IE W_n^k \to m_k<\infty$ for all~$k\geq 1$; this implies \eq{9}.

For the general case~$b>1$, let~$X_n\sim \urnlaw(n,\pi,b,w)$ and~$X'_n \sim \urnlaw(n,\pi,1,w+b-1)$. We show that
\ben{\label{41}
\law\klr{X_n}=\mathcal{P}\bklr{{\textstyle{b-1\atop w}};X_n'-(b+w-1)},
}
and then the result follows easily from Lemma~\ref{lem13}.
To establish~\eq{41}, consider an urn that at step zero
has~$w$ white balls, $b-1$ gray balls, and~$1$~black ball. 
The urn follows the rules of a classical P\'olya urn but at
the arrival times~$T_1, T_2,\ldots,$ driven by~$\pi$, a black ball is added to the urn.
It is clear that~$X_n'-(b+w-1)$ is distributed as the number
of times a gray or white ball is drawn
after~$n$ steps in this urn process, and each time a gray or white ball is drawn,
the chance it is white is proportional to the number of white balls in the urn at that moment, just
as in a classical P\'olya urn. So
$\mathcal{P}\bklr{{\textstyle{b-1\atop w}};X_n'-(b+w-1)}$
is the distribution of the number of white balls in the urn process after~$n$ steps,
and this is exactly~$\urnlaw(n,\pi,b,w)$ if we now view the~$b-1$ gray balls 
as black.
\end{proof}

\section{Some properties of the~$\ul$ family}\label{sec:props}

In this section we derive some basic properties of the $\ul$ family. First we record some moment and tail bounds.
\begin{proposition}[Moment Bounds]
Fix~$w$ and~$(a_k)_{k\geq 1}$, let~$Z\sim\ul\bklr{w;(a_k)_{k\geq1}}$, 
and let~$c$ be the normalising constant from~\eq{2}, depending only on~$w$ and~$(a_k)_{k\geq 1}$.
Then for any positive integer~$m$,
\be{
\IE Z^m\leq \inf_{\{\ell: a_\ell>0\}} \frac{c}{\ell} \left(\frac{w a_\ell}{\ell}\right)^{-(w+m)/\ell}  \Gamma\left(\frac{w+m}{\ell}\right).
}
Moreover, $\ul\bklr{w;(a_k)_{k\geq1}}$ is uniquely determined by its moments.
\end{proposition}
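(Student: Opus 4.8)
The proposition has two parts: a quantitative moment bound, and the (qualitative) claim that $\ul\bklr{w;(a_k)_{k\geq1}}$ is determined by its moments. The plan is to prove the moment bound directly from the density formula~\eq{2}, and then to deduce the moment-determinacy either from a growth estimate on the resulting bound (a Carleman-type criterion) or, more cheaply, from the tail decay already extracted in the proof of the Lemma following~\eq{2}.

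\textbf{Step 1: the moment bound.} Fix $\ell$ with $a_\ell>0$. Starting from $\IE Z^m = c\int_0^\rho x^{m+w-1}\exp\{-v\Phi(x)\}\,dx$ with $v=w$ and $\Phi(x)=\sum_{k\geq1}\frac{a_k}{k}x^k$, I would drop all terms of $\Phi$ except the $k=\ell$ term, using $a_k\geq 0$, to get
\be{
\IE Z^m \leq c\int_0^\infty x^{m+w-1}\exp\bklg{-\tfrac{w a_\ell}{\ell}x^\ell}\,dx.
}
The substitution $y = \frac{w a_\ell}{\ell}x^\ell$ turns this into a Gamma integral, yielding $\frac{c}{\ell}\bklr{\frac{w a_\ell}{\ell}}^{-(w+m)/\ell}\Gamma\bklr{\frac{w+m}{\ell}}$. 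Taking the infimum over all admissible $\ell$ gives the stated bound. This step is entirely routine.

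\textbf{Step 2: moment-determinacy.} The cleanest route is to invoke a standard sufficient condition for the Stieltjes/Hamburger moment problem to be determinate — e.g.\ the Carleman condition $\sum_m (\IE Z^{2m})^{-1/(2m)}=\infty$, or Krein's condition, or simply the well-known fact that a distribution with $\IE e^{t Z^\delta}<\infty$ for some $t,\delta>0$ is moment-determinate. The last of these is the most convenient here: the proof of the first Lemma already established the pointwise bound $x^{v-1}\exp\{-v\Phi(x)\}\leq x^{v-1}\exp\{-v a_{k_0}x^{k_0}/k_0\}$ for some $k_0\geq1$ with $a_{k_0}>0$, so the density of $Z$ is dominated by a sub-Gaussian-type tail (Gaussian when $k_0\geq 2$, exponential when $k_0=1$), and in either case $\IE e^{t Z^{\min(k_0,1)}}<\infty$ for $t$ small, hence $\IE e^{tZ^{1/2}}<\infty$; this forces determinacy. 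Alternatively, fed back into Step 1, the bound $\IE Z^{2m}\leq \frac{c}{k_0}\bklr{\frac{w a_{k_0}}{k_0}}^{-(w+2m)/k_0}\Gamma\bklr{\frac{w+2m}{k_0}}$ combined with Stirling gives $(\IE Z^{2m})^{1/(2m)} = \bigo(m^{1/k_0})$, and since $1/k_0\leq1$ the series $\sum_m m^{-1/k_0}$ diverges, so Carleman's condition holds.

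\textbf{Main obstacle.} There is no serious obstacle; both parts are short. The only point requiring a little care is that when $k_0=1$ the tail is merely exponential rather than Gaussian, so one must make sure the invoked determinacy criterion still applies — it does, since exponential tails give $\IE e^{tZ}<\infty$ for small $t$, which is far more than enough, and via Carleman one has $(\IE Z^{2m})^{1/(2m)}=\bigo(m)$ and $\sum 1/m=\infty$. I would state which determinacy criterion I am using (Carleman, citing a standard reference) and apply it to the bound from Step 1 with $\ell=k_0$, so that the two parts of the proposition are linked and the argument is self-contained given the earlier Lemma.
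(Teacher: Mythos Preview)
Your proposal is correct and matches the paper's proof essentially line for line: the paper drops all but the~$\ell$th term of~$\Phi$, extends to~$(0,\infty)$, and evaluates the resulting Gamma integral exactly as you describe, then for determinacy uses precisely your option~(b), deducing from the bound and Stirling that~$\limsup_{m\to\infty}(\IE Z^m)^{1/(2m)}/m<\infty$ and invoking Carleman's condition for the Stieltjes moment problem.
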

\begin{proof}
If~$\ell$ is such that~$a_\ell>0$, we have
\bes{
\IE Z^m&=c \int_0^\rho x^{w+m-1}e^{-w \sum_{k\geq 1}\frac{a_k}{k} x^k} dx \\
	& \leq c \int_0^\rho x^{w+m-1}e^{-w \frac{a_\ell}{\ell} x^\ell} dx  \\
	& \leq c \int_0^\infty x^{w+m-1}e^{-w \frac{a_\ell}{\ell} x^\ell} dx \\
	&= \frac{c}{\ell} \left(\frac{w a_\ell}{\ell}\right)^{-(w+m)/\ell}  \Gamma\left(\frac{w+m}{\ell}\right),
}
which proves the first assertion. For the second, the bound above and Stirling's approximation shows that 
\be{
\limsup_{m\to\infty} \frac{\bklr{\IE Z^m}^{1/2m}}{m} <\infty,
}
and so in particular, Carleman's condition for the Stieljes moment problem is satisfied.
\end{proof}

\begin{proposition}[Mills Ratio Tail Bound]
Fix~$w$ and~$(a_k)_{k\geq 1}$, and let~$Z\sim\ul\bklr{w;(a_k)_{k\geq1}}$. 
For each~$\alpha>0$, there is a constant~$C_\alpha$ such that for~$x>\alpha$, 
\be{
P(Z\geq x)\leq C_\alpha u(x).
}
\end{proposition}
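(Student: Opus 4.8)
The plan is to exploit the explicit form of the density $u$ from~\eq{2} and compare the tail integral $\int_x^\rho u(t)\,dt$ with $u(x)$ directly. Write $u(t) = c\,t^{v-1}\exp\{-v\Phi(t)\}$ with $\Phi(t) = \sum_{k\geq1}(a_k/k)t^k$ and $v = w$. Since $\Phi$ is increasing, the natural idea is to factor $u(t) = u(x)\cdot (t/x)^{w-1}\exp\{-w(\Phi(t)-\Phi(x))\}$ for $t\geq x$, so that
\be{
	\IP[Z\geq x] = \int_x^\rho u(t)\,dt = u(x)\int_x^\rho \Bigl(\frac{t}{x}\Bigr)^{w-1}\exp\bklg{-w\bklr{\Phi(t)-\Phi(x)}}\,dt,
}
and it suffices to bound the remaining integral by a constant $C_\alpha$ uniformly in $x>\alpha$.

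First I would fix $k_0\geq1$ with $a_{k_0}>0$, so that $\Phi(t)-\Phi(x)\geq (a_{k_0}/k_0)(t^{k_0}-x^{k_0})$. The key elementary inequality is that for $t\geq x>\alpha$ one has $t^{k_0}-x^{k_0}\geq k_0 x^{k_0-1}(t-x)\geq k_0\alpha^{k_0-1}(t-x)$ by convexity. This gives geometric-type decay $\exp\{-w(\Phi(t)-\Phi(x))\}\leq \exp\{-w a_{k_0}\alpha^{k_0-1}(t-x)\}$, which controls the tail of the integral. For the polynomial prefactor $(t/x)^{w-1}$, split the integral at $t=2x$: on $[x,2x]$ the factor is at most $2^{w-1}$ and the exponential decay makes the integral $\bigo(1)$; on $[2x,\rho)$ we have $t/x\leq t$ (since $x>\alpha$, one can arrange $t/x \le t/\alpha$, and absorb $\alpha$), and the exponential decay in $t-x\geq t/2$ dominates any power of $t$, so $\int_{2x}^\infty t^{w-1}e^{-c' t}\,dt$ is finite and in fact vanishes as $x\to\infty$. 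Substituting $s = t-x$ and using $(t/x)^{w-1}=(1+s/x)^{w-1}\leq (1+s/\alpha)^{w-1}$ for $x>\alpha$ makes this completely uniform in $x$: the integral is bounded by $\int_0^\infty (1+s/\alpha)^{w-1}\exp\{-wa_{k_0}\alpha^{k_0-1}s\}\,ds=:C_\alpha<\infty$, independent of $x$.

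The main obstacle — really the only subtlety — is making the bound uniform for \emph{all} $x>\alpha$ rather than just large $x$, since for $x$ near $\alpha$ the convexity bound $t^{k_0}-x^{k_0}\geq k_0\alpha^{k_0-1}(t-x)$ is the right tool but one must be careful that the prefactor $(t/x)^{w-1}$ does not blow up; this is exactly why the substitution $s=t-x$ together with $(1+s/x)^{w-1}\leq(1+s/\alpha)^{w-1}$ is used, trading the dependence on $x$ for dependence on the fixed $\alpha$. One should also note the degenerate cases: if $k_0=1$ the convexity step is trivial, and if $\rho<\infty$ the integral is over a bounded interval so all estimates only improve. Everything else is routine: the resulting $C_\alpha$ depends only on $w$, on $\alpha$, and on the chosen coefficient $a_{k_0}$ and index $k_0$, hence only on $w$ and $(a_k)_{k\geq1}$ as claimed.
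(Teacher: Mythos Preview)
Your argument is correct: factoring $u(t)=u(x)(t/x)^{w-1}e^{-w(\Phi(t)-\Phi(x))}$, bounding $\Phi(t)-\Phi(x)\geq a_{k_0}x^{k_0-1}(t-x)\geq a_{k_0}\alpha^{k_0-1}(t-x)$ by convexity, and controlling the polynomial prefactor via $(1+s/x)^{w-1}\leq(1+s/\alpha)^{w-1}$ gives a uniform bound on the integral by a finite gamma-type integral depending only on $\alpha$, $w$, and $a_{k_0}$. (For $0<w<1$ the prefactor is $\leq 1$ and the argument only simplifies.)

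The paper takes a shorter and structurally different route. Writing $u(x)=c\,e^{-B(x)}$ with $B(x)=-(w-1)\log x+\sum_{k\geq1}(a_k/k)x^k$, it observes that $B''(x)\geq 0$ (for $w\geq 1$), so $B'$ is non-decreasing, and then computes
\be{
\frac{d}{dx}\bbbklr{\frac{\IP[Z\geq x]}{u(x)}}
= B'(x)e^{B(x)}\int_x^\rho e^{-B(y)}\,dy-1
\leq e^{B(x)}\int_x^\rho B'(y)e^{-B(y)}\,dy-1=0.
}
Thus the Mills ratio $\IP[Z\geq x]/u(x)$ is non-increasing, and one simply takes $C_\alpha=\IP[Z\geq\alpha]/u(\alpha)$. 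Compared with your approach, the paper's proof is a one-line log-concavity argument that yields the stronger conclusion that the Mills ratio is monotone, not merely bounded; your approach is more hands-on and has the advantage of producing an explicit, computable $C_\alpha$ in terms of a single coefficient $a_{k_0}$, and it does not rely on $w\geq 1$.
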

\begin{proof}
We show that~$\frac{P(Z\geq x)}{u(x)}$ is non-increasing in~$x$, from which the proposition follows with
$C_\alpha:=\frac{P(Z\geq \alpha)}{u(\alpha)}.$ 
Note that~$u(x)= c e^{-B(x)}$, where we define
\be{
B(x):=-(w-1)\log(x)+ \sum_{k\geq 1} \frac{a_k}{k} x^k.
}
Note that~$B''(x)\geq0$ so~$B'$ is non-decreasing.
Then
\be{
\frac{d}{dx} \left(\frac{P(Z\geq x)}{u(x)} \right) = B'(x) e^{B(x)} \int_x^\rho e^{-B(y)} dy -1 \leq e^{B(x)} \int_x^\rho B'(y) e^{-B(y)} dy -1 =0.\qedhere
} 
\end{proof}

In Theorem~\ref{THM2} we showed that if~$b>1$, then the limiting distribution of our urn model can be expressed as~$\ul\bklr{w;(a_k)_{k\geq 1}}$ multiplied by a beta random variable. It is natural to ask if such distributions are again in the~$\ul$ family. Our next examples show that this is not true in general, not even
for the limits appearing in Theorem~\ref{THM2}.

\begin{example}
If~$U\sim\Beta(1,1)$ and~$X\sim \Exp(1)$, then~$\law(UX)$ is not in the~$\ul$ family. Indeed, the density of~$UX$ for~$x>0$ is~$\int_x^\infty e^{-t}/t dt$, which goes to infinity like~$-\log(x)$ as~$x\to 0$, and hence is not in the~$\ul$ class.
\end{example}

\newcommand{\Erfc}{\mathop{\mathrm{erfc}}}
\begin{example} Let $\pi = \delta_1$ be the point mass at $1$; as discussed in Section~\ref{sec:2a}, the 
scaled limit of~$\urnlaw(n,{{\delta_1}},1,w)$ has density proportional to
\ben{\label{200}
 x^{w-1} \exp\{-C x^{2}\} dx
}
for some constant $C$. By Theorem~\ref{THM2}, the scaled limit of $\urnlaw(n,{{\delta_1}},2,1)$ has distribution~$\law(BZ)$, where $Z$ has density proportional to~\eq{200} with $w=2$, and where $B\sim \Beta(1,1)$ is independent of~$Z$. Using the density formula for products of independent random variables, we obtain that $BZ$ has density proportional to
\be{
  \int_{x}^\infty e^{-Cy^2}dy
}
which, up to scaling and multiplicative constants, is known as the \emph{complementary error function~$\Erfc(x)$}. If $BZ\sim\ul(v,(a_k')_{k\geq1})$ for some positive integer~$v$ and positive sequence $(a_k')_{k\geq1}$, then since
$\lim_{x\to0}  \int_{x}^\infty e^{-Cy^2}dy>0$, we must have $v=1$.
In this case, $a_k'$ are just the coefficients in the Taylor series expansion about zero of $-\log( \int_{x}^\infty e^{-Cy^2}dy)$, but since
\be{
 - \frac{\partial^4}{\partial x^4}\log\Erfc(x)\bigg|_{x=0} = 
  \frac{32(3-\pi)}{\pi ^2}<0,
}
we would have $a_4'= \frac{32(3-\pi)}{4!\pi ^2}<0$ in representation \eq{2}, 
so that $BZ$ cannot be in the $\ul$ family. 
\end{example}

\section{Acknowledgements}
AR is supported by NUS Research Grant R-155-000-167-112.
EP, AR, and NR are supported by ARC grant DP150101459.
This work was done partially while the authors were visiting the Institute for Mathematical Sciences, National University of Singapore in 2015 and 2016, supported in part by the Institute. We thank a referee for their careful reading and constructive comments.

\setlength{\bibsep}{0.5ex}
\def\bibfont{\small}

%
%

\end{document}